\newtheorem{theorem}{Theorem}[section]
\newtheorem{lemma}[theorem]{Lemma}
\newtheorem{prop}[theorem]{Proposition}
\newtheorem{cor}[theorem]{Corollary}
\theoremstyle{definition}
\newtheorem{definition}[theorem]{Definition}
\newtheorem{example}[theorem]{Example}
\theoremstyle{remark}
\newtheorem{remark}[theorem]{Remark}
\numberwithin{equation}{section}
\newcommand{\RR}{{\mathbb R}}
\newcommand{\eps}{\varepsilon}
\newcommand{\out}[1]{\ }
\DeclareMathOperator{\fine}{fine}
\DeclareMathOperator{\mflim}{mf{-}lim}
\DeclareMathOperator{\mfliminf}{mf{-}lim\,inf}
\let\cal=\mathcal
\renewcommand{\phi}{\varphi}
\begin{document}
\title[The Dirichlet problem at the Martin boundary]
{The Dirichlet problem at the Martin boundary of a fine domain}

\author{Mohamed El Kadiri}
\address{Universit\'e Mohammed V
\\D\'epartemnt de Math\'ematiques
\\Facult\'e des Sciences
\\B.P. 1014, Rabat
\\Morocco}
\email{elkadiri@fsr.ac.ma}

\author{Bent Fuglede}
\address{Department of Mathematical Sciences
\\Universitetsparken 5
\\2100 Copenhagen
\\Danmark}
 \email{fuglede@math.ku.dk}

\begin{abstract}
We develop the Perron-Wiener-Brelot method of solving the Dirichlet problem
at the Martin boundary of a fine domain in $\RR^n$ ($n\ge2$).
\end{abstract}

\maketitle

\section{Introduction}\label{sec1}

The fine topology on an open set $\Omega\subset\RR^n$ was introduced
by H.\ Cartan in classical potential theory. It is defined as the
smallest topology on $\Omega$ in which every super\-harmonic function on
$\Omega$ is continuous.
Potential theory on a finely open set, for example in $\RR^n$, was introduced
and studied in the 1970's by the second named author
\cite{F1}. The harmonic and super\-harmonic functions and the potentials in this
theory are termed finely [super]harmonic functions and fine
potentials. Generally one distinguishes by the prefix `fine(ly)'
notions in fine potential theory from those in classical potential
theory on a usual (Euclidean) open set. Large parts of classical
potential theory have been extended to fine potential theory.

The integral representation of positive ($=$ nonnegative) finely super\-harmonic
functions by using Choquet's method of extreme points was studied by the first
named author in \cite{El1}, where it was shown that the cone of positive
super\-harmonic functions equipped with the natural topology has a compact base.
This allowed the present authors
in \cite{EF1} to define the Martin compactification and the Martin boundary of
a fine domain $U$ in $\RR^n$. The Martin compactification $\overline U$ of $U$
was defined by injection of $U$ in a compact base of the cone $\cal S(U)$ of
positive finely super\-harmonic functions on $U$. While the Martin boundary of
a usual domain is closed and hence compact, all
we can say in the present setup is that the Martin boundary $\Delta(U)$ of $U$
is a $G_\delta$ subset of the compact Riesz-Martin space
$\overline U=U\cup\Delta(U)$ endowed with the natural topology. Nevertheless
we have defined in \cite{EF1} a suitably measurable Riesz-Martin kernel
$K:U\times\overline U\longrightarrow[0,+\infty]$.
Every function $u\in\cal S(U)$ has an integral representation
$u(x)=\int_{\overline U}K(x,Y)d\mu(Y)$ in terms of a Radon measure $\mu$ on
$\overline U$. This representation is unique if it is required that $\mu$ be
carried by $U\cup\Delta_1(U)$, where $\Delta_1(U)$ denotes the minimal Martin
boundary of $U$, which likewise is a $G_\delta$ in $\overline U$. In this case
of uniqueness we write $\mu=\mu_u$. It was shown that $u$ is a
fine potential, resp.\ an invariant function, if and only if $\mu_u$ is
carried by $U$, resp.\ by $\Delta(U)$. The invariant functions, likewise
studied in \cite{EF1}, generalize the positive harmonic functions in the
classical Riesz decomposition theorem. Finite valued invariant functions are
the same as positive finely harmonic functions.

There is a notion of minimal thinness of a set $E\subset U$ at a point
$Y\in\Delta_1(U)$, and an associated minimal-fine filter $\cal F(Y)$,
which allowed the authors
in \cite{EF1} to obtain a generalization of the classical Fatou-Na{\"\i}m-Doob
theorem.

In a continuation \cite{EF2} of \cite{EF1} we studied sweeping on a subset
of the Riesz-Martin space,
both relative to the natural topology and to the minimal-fine topology on
$\overline U$, and we showed that the two notions of sweeping are identical.
In the present further continuation of \cite{EF1} and \cite{EF2} we investigate
the Dirichlet problem at the Martin boundary of our given fine domain $U$
by adapting the Perron-Wiener-Brelot (PWB) method to the present setup.
It is a complication that there is no Harnack convergence theorem for finely
harmonic functions, and hence the infimum of a sequence of  upper
PWB-functionss on $U$ may equal $-\infty$ precisely
on some nonvoid proper finely closed subset of $U$.
We define resolutivity of a numerical function on $\Delta(U)$ in a standard
way and show that it is equivalent to a weaker, but technically supple
concept called quasi\-resolutivity, which possibly has not been considered
before in the literature (for the classical case where $U$ is Euclidean open).
Our main result implies the corresponding known result for the classical case,
cf.\ \cite[Theorem 1.VIII.8]{Do}.
At the end of Section 3 we obtain analogous results for the case where the
upper and lower PWB-classes are defined in terms of the minimal-fine topology
on $\overline U$ instead of the natural topology. It follows that the two
corresponding concepts of resolutivity are compatible. This result is
possibly new even in the classical case.
A further alternative, but actually equivalent, concept of resolutivity is
discussed in the closing Section 4.

{\bf Notations}: If $U$ is a fine domain in $\Omega$ we
denote by ${\cal S}(U)$ the convex cone of positive finely super\-harmonic
functions on $U$ in the sense of \cite{F1}. The convex cone of fine potentials
on $U$ (that is, the functions in ${\cal S}(U)$ for which every finely
subharmonic minorant is $\le 0$) is denoted by ${\cal P}(U)$. The cone of
invariant functions on $U$ is the
orthogonal band to ${\cal P}(U)$ relative to ${\cal S}(U)$.
By $G_U$ we denote the (fine) Green kernel for $U$, cf.\ \cite{F2}, \cite{F4}.
If $A\subset U$ and $f:A\longrightarrow[0,+\infty]$ one denotes by $R{}_f^A$,
resp.\ ${\widehat R}{}_f^A$, the reduced function, resp.\ the swept function,
of $f$ on $A$ relative to $U$, cf.\ \cite[Section 11]{F1}. If $u\in\cal S(U)$
and $A\subset U$ we may write ${\widehat R}{}_u^A$ for
${\widehat R}{}_f$ with $f:=1_Au$.

\section{The upper and lower PWB$^h$-classes of a function on $\Delta(U)$}
\label{sec2}

We shall study the Dirichlet problem at $\Delta(U)$ relative to a fixed
finely harmonic function $h>0$ on $U$. We denote by $\mu_h$ the
measure on $\Delta(U)$ carried by $\Delta_1(U)$ and representing $h$,
that is $h=\int K(.,Y)d\mu_h(Y)=K\mu_h$. A function $u$ on $U$ (or on
some finely open subset of $U$) is said to be finely
$h$-hyper\-harmonic, finely $h$-super\-harmonic, $h$-invariant, or a fine
$h$-potential, respectively, if it has the form $u=v/h$, where
$v$ is finely hyper\-harmonic, finely super\-harmonic, invariant, or a
fine potential, respectively.

Let $f$ be a function on $\Delta(U)$ with values in ${\overline \RR}$.
A finely $h$-hyper\-harmonic function $u=v/h$ on $U$ is said
to belong to the upper PWB$^h$-class, denoted by
${\overline {\cal U}}{}_f^h$, if $u$ is lower bounded and if
$$\underset{x\to Y,\,x\in U}{\liminf}u(x)\ge f(Y)
\quad\text{ for every }\;Y\in \Delta(U).$$
We define
 $$\dot H{}_f^h=\inf\,{\overline{\cal U}}{}_f^h,
\quad{\overline H}{}_f^h=\widehat{\dot H{}_f^h}
={\widehat\inf}\,{\overline {\cal U}}{}_f^h\;(\le\dot H{}_f^h).$$
Both functions $\overline H{}_f^h$ and $\dot H{}_f^h$ are needed here,
unlike the classical
case where we have the Harnack convergence theorem and hence
$\overline H{}_f^h=\dot H{}_f^h$.
In our setup, $\dot H{}_f^h$
may be neither finely $h$-hyperharmonic nor identically $-\infty$, but only
nearly finely $h$-hyperharmonic on the finely open set
$\{\overline H{}_f^h>-\infty\}$ which can be a nonvoid proper subset of $U$,
see Example \ref{example2.0} below, which also shows that $\Delta(U)$ can be
non-compact.
According to the fundamental convergence theorem \cite[Theorem 11.8]{F1}
${\overline H}{}_f^h$ is finely $h$-hyper\-harmonic on
$\{\overline H{}_f^h>-\infty\}$
and ${\overline H}{}_f^h={\dot H}{}_f^h$ quasieverywhere (q.e.)\! there;
furthermore, since ${\overline{\cal U}}{}_f^h$ is lower directed, there is a
decreasing sequence
$(u_j)\subset\overline{\cal U}{}_f^h$ such that $\inf_ju_j=\dot H{}_f^h$.
Clearly, $\dot H{}_f^h$ is finely u.s.c.\ on all of  $U$, and
${\overline H}{}_f^h$ is finely l.s.c.\ there.

The lower PWB$^h$ class ${\underline{\cal U}}{}_f^h$
is defined by
${\underline {\cal U}}{}_f^h=-{\overline {\cal U}}_{-f}^h$, and we have
$\dot H{}_0^h=0$, hence also
$\overline H{}_0^h={H}{\vrule width 0pt depth 0.2em }_{\kern-0.7em\hbox{.}}\,\,{}_0^h
=\underline H_0^h=0$.
It follows that if $f\ge0$ then
$\dot H_f^h\ge\overline H_f^h\ge0$ and therefore only positive functions
of class $\overline{\cal U}{}_f^h$ need to be considered in the
definition of $\dot H{}_f^h$ and hence of $\overline H_f^h$.
Moreover, $\dot H{}_{\alpha f+\beta}^h=\alpha\dot H{}_f^h+\beta$ and hence
$\overline H_{\alpha f+\beta}^h=\alpha\overline H_f^h+\beta$ for real
constants $\alpha\ge0$ and $\beta$ (when $0$ times $\pm\infty$ is defined
to be $0$).

\begin{example}\label{example2.0} In $\Omega=\RR^n$ with the Green kernel
$G(x,y)=|x-y|^{2-n}$, $n\ge4$, let $\omega\subset\Omega$ be a bounded H\"older
domain such that $\omega$ is irregular with a single
irregular boundary point $z$, cf.\ e.g.\ \cite[Remark 6.6.17]{AG}. Take
$U=\omega\cup\{z\}$. According to \cite[Theorems 1 and 3.1]{Ai} the
Euclidean boundary $\partial\omega$ of $\omega$ is topologically contained
in the Martin boundary $\Delta(\omega)$.
In particular, $z$ is non-isolated as a point of $\Delta(\omega)$.
But $\Delta(U)=\Delta(\omega)\setminus\{z\}$, where $z$ is identified
with $P_z$ (see \cite[Section 3]{EF1}), and since $\Delta(\omega)$ is
compact we infer that $\Delta(U)$ is noncompact.
In $\RR^n$ choose a sequence $(z_j)$ of points of
$\complement{\overline\omega}$ such that $|z_j-z|\le2^{-j}$.
Then $u:=\sum_j2^{-j}G(.,z_j)$ is infinite at $z$, but finite and
harmonic on $\omega$. Furthermore, $u=\sup_ku_k$,
where $u_k:=\sum_{j\le k}2^{-j}G(.,z_j)$ is harmonic and bounded on
$\overline\omega$ ($\subset\RR^n$). It follows that $(u_k)_{|U}$
is of class $\underline{\cal U}{}_f^h$, where $f:=u_{|\Delta(U)}$.
In fact,
$$
\underset{x\to Y,\,x\in U}{\lim}\,u_k(x)=u_k(Y)\le u(Y)=f(Y)
$$
for $Y\in\Delta(U)$ (natural limit on $U\cup\Delta(U)$, or equivalently
Euclidean limit on $\omega\cup((\partial\omega)\setminus\{z\})$. Thus
${H}{\vrule width 0pt depth 0.2em }_{\kern-0.7em\hbox{.}}\,\,{}_f^h
\ge(u_k)_{|U}$, and hence
$$
\underline H{}_f^h(z)\ge
{H}{\vrule width 0pt depth 0.2em }_{\kern-0.7em\hbox{.}}\,\,{}_f^h(z)
\ge\sup_ku_k(z)=u(z)=+\infty.
$$
To show that $\underline H{}_f^h<+\infty$ on $U\setminus\{z\}$ ($\cong\omega$),
let $v\in\underline{\cal U}{}_f^h$. Being upper bounded on the bounded open
set $\omega$, $v$ is subharmonic on $\omega$ by \cite[Theorem 9.8]{F1},
and so is therefore $v-u$.
For any $Y\in\Delta(U)$ ($\cong(\partial\omega)\setminus\{z\}$) we have
$$
\underset{x\to Y,\,x\in\omega}{\limsup}\,v(x)\le f(Y)<+\infty
$$
(also with Euclidean limit), or equivalently
$$
\underset{x\to Y,\,x\in\omega}{\limsup}\,(v(x)-u(x))\le0.
$$
Since $\{z\}$ is polar and $v-u\le v$ is upper bounded, it follows by a
boundary minimum principle that $v-u\le0$, that is, $v\le u$ on $\omega$.
By varying $v\in\underline{\cal U}{}_f^h$ we conclude that
${H}{\vrule width 0pt depth 0.2em }_{\kern-0.7em\hbox{.}}\,\,{}_f^h\le u$
on $\omega\cong U\setminus\{z\}$ and hence by regularization that
$\underline H{}_f^h\le u<+\infty$ on $U\setminus\{z\}$.
Altogether, $\overline H{}_{-f}^h=-\underline H_f^h$ equals
$-\infty$ at $z$, but is finite on $U\setminus\{z\}$.
\end{example}

Henceforth we fix the finely harmonic function $h>0$ on $U$, relative to
which we shall study the Dirichlet problem at $\Delta(U)$. Similarly to
the classical case, cf.\ \cite[p.\ 108]{Do}, we pose the following definition,
denoting by $1_A$ the indicator function of a set $A\subset\Delta(U)$:

\begin{definition}\label{def4.1}
 A subset $A$ of $\Delta(U)$ is said to be $h$-harmonic measure null if
  $\overline H{}_{1_A}^h=0$.
\end{definition}

It will be shown in Corollary \ref{cor6.10c} that $A$ is $h$-harmonic measure
null if and only if $A$ is $\mu_h$-measurable with $\mu_h(A)=0$.

\begin{prop}\label{prop4.2}
{\rm{(a)}} Every countable union of $h$-harmonic measure null sets is $h$-harmonic
measure null.

{\rm{(b)}} A set $A\subset\Delta(U)$ is $h$-harmonic measure null if and only
if there is a finely $h$-super\-harmonic function $u$ (positive if we like)
on $U$ such that $\lim_{x\to Y,\,x\in U}u(x)=+\infty$ for every $Y\in A$.

{\rm{(c)}} If $f:\Delta(U)\to[0,+\infty]$ has $\overline H{}_f^h=0$ then
$\{f>0\}$ is $h$-harmonic measure null.

{\rm{(d)}} If $f:\Delta(U)\to[0,+\infty]$ has $\overline H{}_f^h<+\infty$ then
$\{f=+\infty\}$ is $h$-harmonic measure null.

{\rm{(e)}} If $f,g:\Delta(U)\to[0,+\infty]$ and if $f\le g$ off some
$h$-harmonic measure null set then $\overline H{}_f^h\le\overline H{}_g^h$.
\end{prop}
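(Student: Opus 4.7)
The plan is to prove part (b) first, deduce (a) from it, and then obtain (c), (d), (e) by comparison with indicator functions, leaning on (a) and (b) as tools.

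For the direction $\Leftarrow$ of (b): if $u\ge0$ is finely $h$-superharmonic on $U$ with $\lim_{x\to Y}u(x)=+\infty$ for every $Y\in A$, then $\eps u\in\overline{\mathcal U}{}_{1_A}^h$ for every $\eps>0$, hence $\dot H{}_{1_A}^h\le\eps u$ and, after fine l.s.c.\ regularization, $\overline H{}_{1_A}^h\le\eps u$; letting $\eps\to0$ at points where $u$ is finite (a finely dense set) forces $\overline H{}_{1_A}^h=0$ q.e., and the conclusion extends to all of $U$ since $\overline H{}_{1_A}^h$ is nonnegative and finely l.s.c. For the converse, I would exploit $\overline H{}_{n1_A}^h=n\overline H{}_{1_A}^h=0$ together with the minimizing decreasing sequences in each $\overline{\mathcal U}{}_{n1_A}^h$ guaranteed after Definition \ref{def4.1}: fix $x_0\in U$ where $\dot H{}_{n1_A}^h(x_0)=0$ simultaneously for every $n$ (possible q.e.), pick $u_n\in\overline{\mathcal U}{}_{n1_A}^h$ with $u_n\ge0$ and $u_n(x_0)<2^{-n}$, write $u_n=v_n/h$, and set $v=\sum_nv_n$. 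Since $v(x_0)\le h(x_0)<+\infty$, $v$ is finely superharmonic on $U$ by the standard summation principle of \cite{F1}, so $u=v/h$ is finely $h$-superharmonic with $\liminf_{x\to Y}u(x)\ge n$ for every $n$ whenever $Y\in A$, whence $\lim_{x\to Y}u(x)=+\infty$.

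For (a), apply (b) to each $A_n$, rescale the barriers so that $w_n(x_0)\le2^{-n}$, sum, and invoke (b) again. Parts (c) and (d) rest on the monotonicity of $\overline H^h$ (immediate from $\overline{\mathcal U}{}_g^h\subseteq\overline{\mathcal U}{}_f^h$ when $f\le g$) and on scaling: for (c), split $\{f>0\}=\bigcup_n\{f>1/n\}$ and note that $u\in\overline{\mathcal U}{}_f^h$ implies $nu\in\overline{\mathcal U}{}_{1_{\{f>1/n\}}}^h$, giving $\overline H{}_{1_{\{f>1/n\}}}^h\le n\overline H{}_f^h=0$ for each $n$, after which (a) concludes; for (d), use $n\cdot 1_{\{f=+\infty\}}\le f$ to obtain $n\overline H{}_{1_{\{f=+\infty\}}}^h\le\overline H{}_f^h<+\infty$ and let $n\to\infty$.

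For (e), let $N=\{f>g\}$, an $h$-harmonic measure null set by hypothesis, and produce via (b) a nonnegative finely $h$-superharmonic function $w$ with $\lim_{x\to Y}w(x)=+\infty$ on $N$. For every $u\in\overline{\mathcal U}{}_g^h$ and $\eps>0$, $u+\eps w\in\overline{\mathcal U}{}_f^h$, giving $\dot H{}_f^h\le \dot H{}_g^h+\eps w$; regularizing (using that $\eps w$ is already finely continuous on $\{w<+\infty\}$) and letting $\eps\to 0$ off the polar set $\{w=+\infty\}$ yields $\overline H{}_f^h\le\overline H{}_g^h$. The main obstacle is the $\Rightarrow$ direction of (b): ensuring that $u=\sum u_n$ is a bona fide finely $h$-superharmonic function (not identically $+\infty$) and that the boundary $\liminf$ upgrades to a limit equal to $+\infty$; both follow once $u$ has been arranged to be finite at a single well-chosen $x_0$, using the q.e.\ equality $\dot H=\overline H=0$ and the summation closure of the cone $\mathcal S(U)$.
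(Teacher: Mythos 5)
Your proposal is correct and follows essentially the same route as the paper's proof: the same summation of upper-class functions controlled by $2^{-j}\eps$ at a single point of a co-polar set, the homogeneity $\overline H{}_{\alpha f}^h=\alpha\overline H{}_f^h$, the perturbation $u+\eps w$ for part (e), and the fact that a positive finely $h$-hyperharmonic function vanishing at one point of the fine domain $U$ vanishes identically. The only (immaterial) differences are that you derive (a) from (b) rather than directly, and that in (d) you use the scaling $n\,1_{\{f=+\infty\}}\le f$ in place of the paper's direct appeal to (b) via a member of $\overline{\cal U}{}_f^h$ finite at a point.
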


\begin{proof} We adapt the proof in \cite[p.\ 108, 111]{Do} for the
classical case.

(a) Fix a point $x_0$ of the co-polar subset
$\bigcap_j\{\dot H{}_{1_{A_j}}^h=0\}$ of $U$. For given $\eps>0$ and
integers $j>0$ there are functions $u_j\in\overline{\cal U}{}_{1_{A_j}}^h$ with
$u_j(x_0)<2^{-j}\eps$. It follows that the function
$u:=\sum_ju_j$ is of class $\overline{\cal U}{}_{1_A}^h$ because
$\sum_j1_{A_j}\ge1_A$ on $\Delta(U)$. Consequently,
$\overline H{}_{1_A}^h(x_0)\le\dot H{}_{1_A}^h(x_0)\le u(x_0)<\eps$,
and the positive finely $h$-hyperharmonic function $\overline H{}_{1_A}^h$
therefore equals $0$ at $x_0$ and so indeed everywhere on $U$.

(b) If $\overline H{}_{1_A}=0$ then $\dot H{}_{1_A}^h=0$ q.e., so
we may choose $x_0\in U$ with $\dot H{}_{1_A}^h(x_0)=0$. For integers
$j>0$ there exist positive finely $h$-superharmonic functions
$u_j\in\overline{\cal U}{}_{1_A}^h$ on $U$ such that $u_j(x_0)<2^{-j}\eps$.
The function $u:=\sum_ju_j$ is positive and finely $h$-superharmonic on $U$
because $u(x_0)<+\infty$. Furthermore,
 $\liminf_{x\to Y,\,x\in U}u(x)=+\infty$ for every  $Y\in A$.
Conversely, if there exists a function $u$ as described in (b), we may arrange
that $u\ge0$ after adding a constant. Then
$\eps u\in\overline{\cal U}{}_{1_A}^h$ for every $\eps>0$. It follows that
$\dot H{}_{1_A}^h\le\eps u$ and by varying $\eps$ that
$\dot H{}_{1_A}^h=0$ off the polar set of infinities of $u$, and hence
q.e.\ on $U$. It follows that indeed $\overline H{}_f^h=0$.

(c) For integers $j\ge1$ let $f_j$ denote the indicator function on $U$
for the set $\{f>1/j\}$. Then $0=\overline H{}_f^h\ge\overline H{}_{f_j}^h/j$,
so the sets $\{f>1/j\}$ are $h$-harmonic measure null, and so is by (a) the
union $\{f>0\}$ of these sets.

(d) Choose $x_0\in U$ so that $\dot H{}_f^h(x_0)=\underline H{}_f^h(x_0)$
($<+\infty$) and $u\in\overline{\cal U}{}_f^h$. Then
$\lim_{x\to Y,\,x\in U}u(x)=+\infty$ for every
$Y\in A:=\{f=+\infty\}$. After adding a constant we arrange that the finely
$h$-hyperharmonic function $u$ is positive. According to (b) it follows that
indeed $\overline H{}_{1_A}^h=0$.

(e) Let $v\in\overline{\cal U}{}_g^h$ and let $u$ be a positive
$h$-superharmonic function on $U$ with limit $+\infty$ at every point of
the $h$-harmonic measure null subset $\{f>g\}$ of $\Delta(U)$. Then
$\dot H{}_f^h\le v+\eps u\in\overline{\cal U}{}_f^h$
for every $\eps>0$. Hence $\dot H{}_f^h\le v$ q.e., and so
$\overline H{}_f^h\le v$ everywhere on $U$. By varying $v$ it follows that
$\overline H{}_f^h\le\dot H{}_g^h$ and so indeed by finely l.s.c.\
regularization $\overline H{}_f^h\le\overline H{}_g^h$.
\end{proof}

\begin{prop}\label{prop6.1}
Let $f$ be a function on $\Delta(U)$ with values in ${\overline \RR}$.

{\rm{(a)}} $\dot H{}_f^h
\ge{H}{\vrule width 0pt depth 0.2em }_{\kern-0.7em\hbox{.}}\,\,{}_f^h$
and hence $\overline H{}_f^h
\ge{H}{\vrule width 0pt depth 0.2em }_{\kern-0.7em\hbox{.}}\,\,{}_f^h$ and $\dot H{}_f^h
\ge\underline H{}_f^h$.

{\rm{(b)}} $\overline H{}_f^h
\ge\underline H{}_f^h$ on
$\{{\overline H}{}_f^h>-\infty\}\cup
\{{\underline H}{}_f^h<+\infty\}$.

{\rm{(c)}} If $f$ is lower bounded then
$\overline H{}_f^h(x)=\dot H{}_f^h(x)$ at any point $x\in U$
at which $\dot H{}_f^h(x)<+\infty$. If \,$f\ge0$ on $\Delta(U)$ then
${\overline H}{}_f^h$ $(\ge0)$ is either identically $+\infty$ or
$h$-invariant on $U$.

\end{prop}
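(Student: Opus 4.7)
The plan is to handle (a) via a boundary minimum principle at the Martin boundary, to derive (b) by combining (a) with the fundamental convergence theorem and fine-topological considerations, and to settle (c) by reducing, via Riesz decomposition, to decreasing limits of $h$-invariant functions. The main obstacle will be the clean invocation of two external ingredients from the companion papers: the boundary minimum principle at $\Delta(U)$, which powers (a) and propagates to (b) and (c), and the vanishing of fine $h$-potentials at the Martin boundary, needed for the invariance reduction in (c).

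For (a), the heart is to show $u\ge v$ whenever $u\in\overline{\cal U}{}_f^h$ and $v\in\underline{\cal U}{}_f^h$. The function $w:=u-v$ is finely $h$-hyperharmonic (as the sum of $u$ and the finely $h$-hyperharmonic $-v$), lower bounded, and satisfies $\liminf_{x\to Y,\,x\in U}w(x)\ge 0$ at every $Y\in\Delta(U)$ (with the obvious conventions when $f(Y)=\pm\infty$). The boundary minimum principle at the Martin boundary from \cite{EF1}/\cite{EF2} then forces $w\ge 0$ on $U$, and taking infima over $u$ and suprema over $v$ yields $\dot H{}_f^h\ge {H}{\vrule width 0pt depth 0.2em }_{\kern-0.7em\hbox{.}}\,\,{}_f^h$. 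The two ``hence'' consequences follow by regularization: since ${H}{\vrule width 0pt depth 0.2em }_{\kern-0.7em\hbox{.}}\,\,{}_f^h=-\dot H{}_{-f}^h$ is already finely l.s.c.\ and $\le\dot H{}_f^h$, while $\overline H{}_f^h$ is the largest finely l.s.c.\ minorant of $\dot H{}_f^h$, we obtain $\overline H{}_f^h\ge{H}{\vrule width 0pt depth 0.2em }_{\kern-0.7em\hbox{.}}\,\,{}_f^h$; the inequality $\dot H{}_f^h\ge\underline H{}_f^h$ then follows by duality from the first inequality applied to $-f$.

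For (b), the q.e.\ equalities $\overline H{}_f^h=\dot H{}_f^h$ and $\underline H{}_f^h={H}{\vrule width 0pt depth 0.2em }_{\kern-0.7em\hbox{.}}\,\,{}_f^h$ combined with (a) give $\overline H{}_f^h\ge\underline H{}_f^h$ quasieverywhere on $U$. On the finely open set $V_1:=\{\overline H{}_f^h>-\infty\}\cap\{\underline H{}_f^h<+\infty\}$ both sides are finite-valued and their difference is finely $h$-hyperharmonic and $\ge 0$ q.e.; the negativity set is at once finely open (a strict sublevel of a finely l.s.c.\ function) and polar, hence empty, so the pointwise inequality holds on $V_1$. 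For $x\in V\setminus V_1$ with $\overline H{}_f^h(x)>-\infty$ and $\underline H{}_f^h(x)=+\infty$, either $\overline H{}_f^h(x)=+\infty$ (and the inequality is trivial), or $\overline H{}_f^h$ is finite and finely continuous at $x$, furnishing a fine neighborhood on which $\overline H{}_f^h$---and therefore ${H}{\vrule width 0pt depth 0.2em }_{\kern-0.7em\hbox{.}}\,\,{}_f^h\le\overline H{}_f^h$---is bounded; the finely u.s.c.\ regularization $\underline H{}_f^h$ of ${H}{\vrule width 0pt depth 0.2em }_{\kern-0.7em\hbox{.}}\,\,{}_f^h$ is then finite at $x$, contradicting $\underline H{}_f^h(x)=+\infty$. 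The symmetric subcase follows by duality with $-f$.

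For (c), shifting by a constant and using $\overline H_{f+M}^h=\overline H_f^h+M$ reduces the lower-bounded case to $f\ge 0$, in which (a) forces $u\ge 0$ for each $u\in\overline{\cal U}{}_f^h$. Lower directedness supplies a decreasing sequence $(u_j)\subset\overline{\cal U}{}_f^h$ with $\inf_j u_j=\dot H{}_f^h$; take the Riesz decomposition $u_j h=p_j+w_j$ with $p_j$ a fine potential and $w_j$ invariant. The boundary behavior of fine $h$-potentials at $\Delta(U)$ from \cite{EF1}/\cite{EF2} (possibly combined with Proposition~\ref{prop4.2}(e) to absorb an $h$-harmonic measure null exceptional set into $f$) shows that $w_j/h\in\overline{\cal U}{}_f^h$ and satisfies $w_j/h\le u_j$, so $(w_j/h)$ is a decreasing sequence of positive $h$-invariants with $\inf_j(w_j/h)=\dot H{}_f^h$. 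Positive invariant functions form a band in ${\cal S}(U)$ closed under countable decreasing limits, so $\dot H{}_f^h$ is either $h$-invariant or identically $+\infty$; in either case $\dot H{}_f^h$ is finely l.s.c., so $\overline H{}_f^h=\widehat{\dot H{}_f^h}=\dot H{}_f^h$ on all of $U$, proving both assertions of (c).
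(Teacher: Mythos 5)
Your treatment of (a) is the paper's own argument (boundary minimum principle applied to $u-v$, then regularization), and your argument for (b) is workable, though two of your justifications are off: on $\{\overline H{}_f^h>-\infty\}\cap\{\underline H{}_f^h<+\infty\}$ the two functions need not be finite-valued, and a strict \emph{sublevel} set of a finely l.s.c.\ function is not finely open in general --- the set $\{\overline H{}_f^h<\underline H{}_f^h\}$ is finely open there only because finely $h$-hyperharmonic functions are finely \emph{continuous}, which is also the fact the paper uses directly via $\overline H{}_f^h(x_0)=\operatorname{fine\,lim}_{x\to x_0}\overline H{}_f^h(x)\ge\operatorname{fine\,lim\,sup}_{x\to x_0}{H}{\vrule width 0pt depth 0.2em }_{\kern-0.7em\hbox{.}}\,\,{}_f^h(x)=\underline H{}_f^h(x_0)$.

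Part (c) has a genuine gap. First, you pass from $u_j\in\overline{\cal U}{}_f^h$ to its $h$-invariant part $w_j/h$ via the Riesz decomposition and assert $w_j/h\in\overline{\cal U}{}_f^h$. That would require the fine-potential part $p_j/h$ to satisfy $\limsup_{x\to Y}\,p_j(x)/h(x)=0$ at \emph{every} $Y\in\Delta(U)$ in the natural topology. But fine potentials are only known to vanish at the Martin boundary in the minimal-fine sense and $\mu_h$-a.e.\ on $\Delta_1(U)$ (the Fatou--Na{\"\i}m--Doob theorem of \cite{EF1}), not pointwise at every natural boundary point; subtracting $p_j/h$ can therefore destroy the inequality $\liminf\ge f(Y)$ at some $Y$, and membership of $w_j/h$ in the upper class fails. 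Invoking Proposition \ref{prop4.2} (e) does not repair this: it controls $\overline H$, not $\dot H$, and modifying $f$ on a null set changes the class $\overline{\cal U}{}_f^h$ and hence $\dot H{}_f^h$, which is exactly the object (c) is about. Second, your closing step --- that positive $h$-invariant functions are closed under countable \emph{pointwise} decreasing limits --- is precisely the Harnack/Dini-type statement that the paper stresses is unavailable for finely harmonic functions; the fundamental convergence theorem controls only the regularized infimum. Tellingly, your argument would show that $\dot H{}_f^h$ is finely l.s.c.\ and equals $\overline H{}_f^h$ everywhere on $U$, a conclusion strictly stronger than (c) which would force the polar set $P_f^h$ of Section \ref{sec3} to be empty; the paper never claims this. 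The paper's route is different: it checks that $h\overline{\cal U}{}_f^h$ is a Perron family for the finely open cover $(V_k)$ of \cite[Lemma 2.1 (c)]{EF2} and applies \cite[Theorem 2.3]{EF2}, obtaining $h$-invariance of $\widehat{\inf}\,\overline{\cal U}{}_f^h$ and the equality $\overline H{}_f^h(x)=\dot H{}_f^h(x)$ exactly where $\dot H{}_f^h(x)<+\infty$ by local sweeping rather than by a global Riesz decomposition. You would need to either reproduce that Perron-family mechanism or find a substitute for the two unjustified steps above.
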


\begin{proof} Clearly, ${\overline{\cal U}}{}_f^h$ is lower directed and
$\underline{\cal U}{}_f^h$ is upper directed. The constant function $+\infty$
belongs to ${\overline{\cal U}}{}_f^h$. If \,$+\infty$ is the only function
of class ${\overline{\cal U}}{}_f^h$ then obviously $\dot H{}_f^h=+\infty$
and hence ${\overline H}{}_f^h=+\infty$. In the remaining case it suffices
to consider finely $h$-super\-harmonic functions in the definition of
$\dot H{}_f^h$ and hence of \,${\overline H}{}_f^h$.

(a) Let $u\in\overline{\cal U}{}_f^h$ and $v\in\underline{\cal U}{}_f^h$.
Then $u-v$ is well defined, finely $h$-hyper\-harmonic, and lower bounded
on $U$, and
\begin{eqnarray*}\underset{x\to Y,\,x\in U}{\liminf}(u(x)-v(x))\!\!\!
&\ge&\!\!\!\underset{x\to Y,\,x\in U} {\liminf}u(x)
-\underset{x\to Y,\,x\in U} {\limsup}v(x)\\
&\ge&\!\!\! f(Y)-f(Y)=0
\end{eqnarray*}
if $f(Y)$ is finite; otherwise $\liminf u(x)-\limsup v(x)=+\infty\ge0$,
for if for example $f(Y)=+\infty$ then $\liminf u=+\infty$ whereas
$\limsup v<+\infty$ since $v$ is upper bounded.
By the minimal-fine boundary minimum property given in
\cite[Corollary 3.13]{EF2} together with \cite[Proposition 3.5]{EF2}
applied to the finely super\-harmonic function $hu-hv$
(if $\ne+\infty$) it then follows that $u-v\ge0$, and hence
$u\ge v$. By varying $u$ and $v$ in either order we obtain
$\dot H{}_f^h
\ge{H}{\vrule width 0pt depth 0.2em }_{\kern-0.7em\hbox{.}}\,\,{}_f^h$.
Since
${H}{\vrule width 0pt depth 0.2em }_{\kern-0.7em\hbox{.}}\,\,{}_f^h
=\sup\underline{\cal U}{}_f^h$
is finely l.s.c.\ it follows that
${\overline H}{}_f^h
\ge{H}{\vrule width 0pt depth 0.2em }_{\kern-0.7em\hbox{.}}\,\,{}_f^h$,
and similarly $\dot H{}_f^h
\ge{\underline H}{}_f^h$.

(b) Consider any point $x_0$ of the finely open set
$V=\{\overline H{}_f^h>-\infty\}$.
Since ${\overline H}{}_f^h$ is finely $h$-hyper\-harmonic
and hence finely continuous on $V$
we obtain by (a)
$$
{\overline H}{}_f^h(x_0)
=\underset{x\to x_0,\,x\in V}{\fine\lim}\,{\overline H}{}_f^h(x)
\ge\underset{x\to x_0,\,x\in V\setminus E}{\fine\lim\sup}\,
{H}{\vrule width 0pt depth 0.2em }_{\kern-0.7em\hbox{.}}\,\,{}_f^h
={\underline H}{}_f^h(x_0).
$$
The case $x_0\in\{\underline H{}_f^h<+\infty\}$ is treated similarly,
or by replacing $f$ with $-f$.

(c) The former assertion reduces easily to the case $f\ge0$, whereby
$\overline{\cal U}{}_f^h$ consists of positive functions. We may assume that
$\overline H{}_f^h\not\equiv+\infty$, and hence
$h\overline{\cal U}{}_f^h\subset\cal S(U)$.
Consider the cover of $U$ by the finely open sets $V_k$ from
\cite[Lemma 2.1 (c)]{EF2}. Then $h\overline{\cal U}{}_f^h$ is a Perron family
in the sense of \cite[Definition 2.2]{EF2}. It therefore follows by
\cite[Theorem 2.3]{EF2} that indeed
${\overline H}{}_f^h=\widehat{\inf}\,\overline{\cal U}{}_f^h$ is
$h$-invariant, and that $\overline H{}_f^h(x)=\dot H{}_f^h(x)$ at any point
$x\in U$ at which $\dot H{}_f^h(x)<+\infty$.
\end{proof}

\begin{prop}\label{prop6.2} Let $f,g$ be two functions on $\Delta(U)$ with
values in $\overline\RR$.

{\rm{1.}} If $f+g$ is well defined everywhere on $\Delta(U)$ then
the inequality $\dot H{}_{f+g}^h\le\dot H{}_f^h+\dot H{}_g^h$
holds at each point of $U$ where $\dot H{}_f^h+\dot H{}_g^h$ is
well defined.

{\rm{2.}} If $(f+g)(Y)$ is defined arbitrarily at points $Y$ of
undetermination then the inequality
$\overline H{}_{f+g}^h\le\overline H{}_f^h+\overline H{}_g^h$
holds everywhere on $\{\overline H{}_f^h,\overline H{}_g^h>-\infty\}$.

{\rm{3.}} For any point $x\in U$ we have $\dot H{}_f^h(x)<+\infty$
if and only if $\dot H{}_{f\vee0}^h(x)<+\infty$.

{\rm{4.}} Let $(f_j)$ be an increasing sequence of lower bounded functions
$\Delta(U)\longrightarrow\,]-\infty,+\infty]$. Writing $f=\sup_jf_j$ we have
$\overline H{}_f^h=\sup_j\overline H{}_{f_j}^h$ and
$\dot H{}_f^h=\sup_j\dot H{}_{f_j}^h$.
\end{prop}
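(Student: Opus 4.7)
For Part 1, I would take $u\in\overline{\cal U}_f^h$ and $v\in\overline{\cal U}_g^h$ and verify that $u+v$ lies in $\overline{\cal U}_{f+g}^h$: the sum is lower bounded, finely $h$-hyperharmonic (the cone being stable under addition wherever the sum is defined), and at every $Y\in\Delta(U)$
\[
\liminf_{x\to Y}(u+v)(x)\ge f(Y)+g(Y),
\]
which is legitimate since $f+g$ is everywhere well defined by assumption. Hence $\dot H_{f+g}^h\le u+v$. Exploiting lower-directedness of $\overline{\cal U}_f^h$, pick a sequence $u_n\searrow\dot H_f^h$ and pass to the limit to obtain $\dot H_{f+g}^h(x)\le\dot H_f^h(x)+v(x)$ wherever the right side is defined, then repeat with $v$.

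For Part 2, I would apply Part 1 on the finely open set $V:=\{\overline H_f^h>-\infty\}\cap\{\overline H_g^h>-\infty\}$. On $V$ both $\overline H_f^h$ and $\overline H_g^h$ are finely $h$-hyperharmonic, hence finely l.s.c., and coincide quasi-everywhere with $\dot H_f^h$ and $\dot H_g^h$. Therefore $\dot H_{f+g}^h\le\dot H_f^h+\dot H_g^h=\overline H_f^h+\overline H_g^h$ q.e.\ on $V$, and since the right-hand side is already finely l.s.c.\ there, fine l.s.c.\ regularization of the left side gives $\overline H_{f+g}^h\le\overline H_f^h+\overline H_g^h$ throughout $V$.

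For Part 3, one direction is immediate: $f\le f\vee 0$ gives $\overline{\cal U}_{f\vee 0}^h\subseteq\overline{\cal U}_f^h$ and hence $\dot H_f^h\le\dot H_{f\vee 0}^h$. For the converse I would take $u\in\overline{\cal U}_f^h$ with $u(x)<+\infty$ and a constant $c\ge 0$ with $u\ge-c$; then $u+c\ge 0$ is finely $h$-hyperharmonic, lower bounded, and satisfies $\liminf(u+c)\ge\max(0,f+c)\ge f\vee 0$, so $u+c\in\overline{\cal U}_{f\vee 0}^h$ and $\dot H_{f\vee 0}^h(x)\le u(x)+c<+\infty$.

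Part 4 is the substantive one. The inclusion $\overline{\cal U}_f^h\subseteq\overline{\cal U}_{f_j}^h$ gives the trivial direction $\sup_j\dot H_{f_j}^h\le\dot H_f^h$ and likewise for $\overline H$, so the reverse is the main obstacle. First I would reduce to $f_j\ge 0$ by a translation and set $\phi:=\sup_j\overline H_{f_j}^h$. Each $\overline H_{f_j}^h$ is $h$-invariant by Proposition~\ref{prop6.1}(c), and an increasing limit of $h$-invariant functions which is finite somewhere is again $h$-invariant, so $\phi$ is either $h$-invariant or identically $+\infty$. At a point $x_0$ with $\phi(x_0)<+\infty$, pick positive finely $h$-superharmonic $u_j\in\overline{\cal U}_{f_j}^h$ with $u_j(x_0)<\dot H_{f_j}^h(x_0)+\eps 2^{-j}$ and form
\[
u:=\phi+\sum_{j\ge 1}(u_j-\overline H_{f_j}^h).
\]
Each summand is positive and finely $h$-hyperharmonic, which rests on the band structure of the cone of $h$-invariant functions inside the cone of finely $h$-superharmonic ones; on the co-polar set where $\dot H_{f_j}^h=\overline H_{f_j}^h$ for every $j$ the series converges at $x_0$, and $u\ge u_j$ pointwise (because $\phi\ge\overline H_{f_j}^h$) forces $\liminf u\ge f_j$ for each $j$ and hence $\liminf u\ge f$, so $u\in\overline{\cal U}_f^h$. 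This delivers $\dot H_f^h(x_0)\le u(x_0)<\phi(x_0)+\eps$ quasi-everywhere, whence $\dot H_f^h\le\phi$ q.e.; regularization then yields $\overline H_f^h=\widehat\phi=\phi$ (using that $\phi$ is finely l.s.c.), and the $\dot H$-equality follows by combining the monotone lower bound with the q.e.\ coincidence of $\dot H$ with $\overline H$. The chief technical hurdles are the auxiliary lemma that $u_j-\overline H_{f_j}^h$ is finely $h$-hyperharmonic, and the promotion from quasi-everywhere to pointwise equality for the $\dot H$ identity.
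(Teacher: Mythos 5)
Your overall route coincides with the paper's: Parts 1 and 3 are proved exactly as there, and the $\overline H$ half of Part 4 uses the same majorant $u=\sup_j\overline H{}_{f_j}^h+\sum_j(u_j-\overline H{}_{f_j}^h)$ with the same appeal to the $h$-invariance of the increasing limit. Two points, however, are genuine gaps. First, in Part 2 you ``apply Part 1 on $V$'', but Part 1's hypothesis is that $f+g$ is well defined everywhere on $\Delta(U)$, which is precisely what Part 2 drops: there may be boundary points $Y$ with $f(Y)=+\infty$ and $g(Y)=-\infty$ at which $(f+g)(Y)$ is assigned arbitrarily, and then Part 1 is simply not available. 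The paper handles this by replacing $f,g$ with truncations $f_0,g_0<+\infty$, observing via Proposition \ref{prop4.2} (d) that one may assume $\{f=+\infty\}$ and $\{g=+\infty\}$ are $h$-harmonic measure null (otherwise the inequality is trivial), and then using Proposition \ref{prop4.2} (e) to pass from $f_0+g_0$ back to $f+g$. You either need that reduction or a direct verification that $u+v\in\overline{\cal U}{}_{f+g}^h$ survives at points of undetermination (which it does, since $\liminf u=+\infty$ there and $v$ is lower bounded, but you must say so).

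Second, for the $\dot H$ identity in Part 4 your argument yields $\dot H{}_f^h\le\sup_j\dot H{}_{f_j}^h$ only quasi-everywhere, and you explicitly leave the promotion to every point as an unresolved ``hurdle''; but the statement asserts equality everywhere, and $\dot H{}_f^h$ is only finely u.s.c., so no regularization will carry the inequality across the exceptional polar set. The paper closes this by rerunning the construction at an \emph{arbitrary} point $x\in U$ (no co-polar restriction), choosing $u_j(x)<\dot H{}_{f_j}^h(x)+2^{-j}\eps$ and replacing $\overline H{}_{f_j}^h$ by $\dot H{}_{f_j}^h$ throughout the definition of the majorant, which gives $\dot H{}_f^h(x)\le\sup_j\dot H{}_{f_j}^h(x)+\eps$ directly at that $x$. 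You should incorporate that second pass rather than leaving it open.
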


\begin{proof} For 1., 2., and 4.\ we proceed much as in
\cite[1.VIII.7, Proof of (c), (b), and (e)]{Do}.
For Assertion 1., consider any two functions $u\in\overline{\cal U}{}_f^h$
and $v\in\overline{\cal U}{}_g^h$.
Then $u+v\in\overline{\cal U}{}_{f+g}^h$ and hence $\dot H{}_{f+g}^h\le u+v$.
By varying $v$ it follows that $\dot H{}_{f+g}^h\le u+\dot H{}_g^h$ on
$\{\dot H{}_g^h>-\infty\}$.
By varying $u$ this leads to
$\dot H{}_{f+g}^h\le\dot H{}_f^h+\dot H{}_g^h$ whereever the sum is well defined
on $\{\dot H{}_g^h>-\infty\}$.
By interchanging $u$ and $v$ we infer that
$\dot H{}_{f+g}^h\le\dot H{}_f^h+\dot H{}_g^h$ altogether holds whereever the
sum is well defined on $\{\dot H{}_f^h,\dot H{}_g^h>-\infty\}$.
On the residual set $\{\dot H{}_f^h=-\infty\}\cup\{\dot H{}_g^h=-\infty\}$
it is easily seen that
$\dot H{}_{f+g}^h=-\infty=\dot H{}_f^h+\dot H{}_g^h$ whereever the sum is well
defined.

For Assertion 2., suppose first that $f,g<+\infty$ (and so $f+g$ is well
defined). In the proof of Assertion 1.\ we had
$\dot H{}_{f+g}^h\le u+\dot H{}_g^h$ for $\dot H{}_g^h>-\infty$, which is
satisfied on $\{\overline H{}_g^h>-\infty\}$. It follows that
$\overline H{}_{f+g}^h\le u+\overline H{}_g^h$ there, and hence that
$\overline H{}_{f+g}^h\le\overline H{}_f^h+\overline H{}_g^h$ there (whereever
well defined). In the general case
define functions $f_0<+\infty$, resp.\ $g_0<+\infty$, which equal $f$,
resp.\ $g$, except on the set $\{f=+\infty\}$, resp.\ $\{g=+\infty\}$.
We may assume that these exceptional sets are $h$-harmonic measure null,
for if e.g.\ $\{f=+\infty\}$ is not $h$-harmonic measure null then
$\overline H{}_f^h\equiv=+\infty$ by Proposition \ref{prop4.2} (d),
in which case 1.\ becomes trivial.
It therefore follows in view of Proposition
\ref{prop4.2} (a), (e) that $f+g=f_0+g_0$ off the $h$-harmonic measure
null set $\{f=+\infty\}\cup\{g=+\infty\}$ and hence by Proposition
\ref{prop4.2} (e) that
$$
\overline H{}_{f+g}^h=\overline H{}_{f_0+g_0}^h
\le\overline H{}_{f_0}^h+\overline H{}_{g_0}^h
\le\overline H{}_f^h+\overline H{}_g^h$$
on the finely open set
$\{\overline H{}_f^h>-\infty\}\cap\{\overline H{}_g^h>-\infty\}$,
the second inequality because $f_0\le f$ and $g_0\le g$.

For Assertion 3., let $x\in U$ be given, and suppose that
$\dot H{}_f^h(x)<+\infty$.
There is then $u\in\overline{\cal U}{}_f^h$ with $u(x)<+\infty$, $u$ being
finely $h$-superharmonic $\ge-c$ for some constant $c\ge0$. It follows that
$u+c\in\overline{\cal U}{}_{f\vee0}^h$ and hence
$\dot H{}_{f\vee0}^h(x)\le u(x)+c<+\infty$. The converse follows from
$\dot H{}_f^h\le\dot H{}_{f\vee0}^h$.

Assertion 4.\ reduces easily to the case of positive functions $f_j$.
Consider first the case of $\overline H$. Then
${\overline H}{}_f^h$ and each ${\overline H}{}_{f_j}^h$ are
positive and hence finely $h$-hyper\-harmonic by Proposition \ref{prop6.1} (c).
The inequality ${\overline H}{}_f^h\ge\sup_j{\overline H}{}_{f_j}^h$ is obvious,
and we may therefore assume that the positive finely $h$-hyper\-harmonic
function $\sup_j{\overline H}{}_{f_j}^h$ is not identically $+\infty$, and
therefore is $h$-invariant, again according to
Proposition \ref{prop6.1} (c). Denote $E_j$ the polar subset
$\{\overline H{}_{f_j}^h<\dot H{}_{f_j}^h\}$
of $U$ and write $E:=\bigcup_jE_j$ (polar). For a fixed $x\in U\setminus E$
and for given $\eps>0$ choose functions $u_j\in{\overline{\cal U}}{}_{f_j}^h$
so that
\begin{eqnarray}u_j(x)\!\!\!
&<&\!\!\!\dot H{}_{f_j}^h(x)+2^{-j}\eps=\overline H{}_{f_j}^h(x)+2^{-j}\eps.
\end{eqnarray}
In particular, $u_j$ is finely $h$-super\-harmonic.
 Define a finely $h$-hyper\-harmonic function $u$ by
\begin{eqnarray}u\!\!\!
&=&\!\!\!\sup_j{\overline H}{}_{f_j}^h
+\sum_j(u_j-{\overline H}{}_{f_j}^h)
\ge{\overline H}{}_{f_k}^h+(u_k-{\overline H}{}_{f_k}^h)=u_k
\end{eqnarray}
for any index $k$. Then
$$\liminf_{x\to Y,\,x\in U}u(x)
\ge\liminf_{x\to Y,\,x\in U}u_k(x)
\ge f_k(Y)$$
for every $Y\in\Delta(U)$ and every index $k$.
Thus $u\in{\overline{\cal U}}{}_f^h$ and
${\overline H}{}_f^h\le{\dot H}{}_f^h\le u$.
In particular, by the former equality (2.2) and by (2.1),
\begin{eqnarray}{\overline H}{}_f^h(x)\!\!\!
&\le&\!\!\!u(x)
\le\sup_j{\overline H}{}_{f_j}^h(x)+\eps,
\end{eqnarray}
and hence the finely $h$-hyper\-harmonic function ${\overline H}{}_f^h$ is
finely $h$-super\-harmonic.
Because $\sup_j{\overline H}{}_f^h$ is $h$-invariant and majorized by
${\overline H}{}_f^h$, the function
${\overline H}{}_f^h-\sup_j {\overline H}{}_{f_j}^h$ is finely
$h$-super\-harmonic $\ge0$ on $U$ by \cite [Lemma 2.2]{EF1},
and $\le\eps$ at $x$. For $\eps\to0$ we obtain the remaining inequality
${\overline H}{}_f^h\le\sup_j {\overline H}{}_{f_j}^h$.

In the remaining case of $\dot H$ we have
$\sup_j\dot H{}_{f_j}^h(x)\le\dot H{}_f^h$. For any point $x\in U$ at which
$\sup_j\dot H{}_{f_j}^h(x)<\dot H{}_f^h$ and for any $\eps>0$ choose functions
$u_j\in{\overline{\cal U}}{}_{f_j}^h$ so that
\begin{eqnarray}u_j(x)\!\!\!
&<&\!\!\!\dot H{}_{f_j}^h(x)+2^{-j}\eps.
\end{eqnarray}
Proceed as in the above case of $\overline H$ by defining the finely
$h$-hyperharmonic function by (2.2), replacing throughout $\bar H$
by $\dot H$. Corresponding to (2.3) we now end by
$$
\dot H{}_f^h(x)\le u(x)\le\sup_j{\dot H}{}_{f_j}^h(x)+\eps,
$$
from which the remaining inequality
$\dot H{}_f^h(x)\le\sup_j{\dot H}{}_{f_j}^h(x)$ follows for $\eps\to0$.
\end{proof}

\section{$h$-resolutive and $h$-quasiresolutive functions}\label{sec3}

\begin{definition}\label{def6.9}
A function $f$ on $\Delta(U)$ with values in ${\overline \RR}$ is said
to be $h$-resolutive if $\overline H{}_f^h=\underline H{}_f^h$ on $U$
and if this function, also denoted by $H_f^h$,
is neither identically $+\infty$ nor identically $-\infty$.
\end{definition}

It follows that $H_f^h$ is finely $h$-harmonic on the finely open set
$\{\overline H{}_f^h>-\infty\}\cap\{\underline H{}_f^h>+\infty\}
=\{-\infty<H_f^h<+\infty\}$.

For any function $f:\Delta(U)\longrightarrow\RR$ we consider the following
two subsets of $U$:
$$
E_f^h=\{\overline H{}_f^h=-\infty\}\cup
\{\underline H{}_f^h=+\infty\}\cup
\{\overline H{}_f^h\ne\underline H{}_f^h\},
$$
$$
P_f^h=\{\dot H{}_f^h>\overline H{}_f^h>-\infty\}
\cup\{{H}{\vrule width 0pt depth 0.2em }_{\kern-0.7em\hbox{.}}\,\,{}_f^h
<\underline H{}_f^h<+\infty\},
$$
of which $P_f^h$ is polar.

\begin{definition}\label{def2.3} A function $f$ on $\Delta(U)$
with values in ${\overline \RR}$ is said to be
$h$-quasi\-resolutive if $E_f^h$ is polar, or equivalently if the relations
$\overline H{}_f^h>-\infty$,
$\underline H{}_f^h<+\infty$, and
$\overline H{}_f^h=\underline H{}_f^h$ hold quasieverywhere on $U$.
\end{definition}

When $f$ is $h$-quasi\-resolutive on $U$ the functions $\overline H{}_f^h$
and $\underline H{}_f^h$ are finely $h$-hyperharmonic and finely
$h$-hypoharmonic, respectively, off the polar set
$\{\overline H{}_f^h=-\infty\}\cup\{\underline H{}_f^h=+\infty\}$, and they
are actually equal and hence finely $h$-harmonic off the smaller
polar set $E_f^h$. We then denote
by $H_f^h$ the common restriction of $\overline H{}_f^h$ and
$\underline H{}_f^h$ to $U\setminus E_f^h$.
Since $\overline H{}_f^h$ is finely $h$-hyperharmonic on $U\setminus E_f$, and
$\underline H{}_f^h$ is finely $h$-hypoharmonic there, it
follows by Proposition \ref{prop6.1} (a) that the equalities
$$
\overline H{}_f^h=\dot H{}_f^h
={H}{\vrule width 0pt depth 0.2em }_{\kern-0.7em\hbox{.}}\,\,{}_f^h=
\underline H{}_f^h
$$
hold q.e.\ on $U\setminus E_f^h$ and hence q.e.\ on $U$.

\begin{lemma}\label{lemma3.3} Every $h$-resolutive function $f$ is
$h$-quasi\-resolutive.
\end{lemma}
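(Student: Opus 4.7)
The plan is to prove that $E_f^h$ is polar whenever $f$ is $h$-resolutive. Since by hypothesis $\overline H{}_f^h\equiv\underline H{}_f^h$ on $U$, the third component $\{\overline H{}_f^h\ne\underline H{}_f^h\}$ of $E_f^h$ is empty; writing $w$ for the common value, neither identically $+\infty$ nor identically $-\infty$, it remains to show that $\{w=+\infty\}$ and $\{w=-\infty\}$ are both polar. The symmetry $\underline H{}_f^h=-\overline H{}_{-f}^h$ means it suffices to handle the first set.

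A preliminary observation is that $\{-\infty<w<+\infty\}$ is nonempty. Since $w=\overline H{}_f^h$ is finely l.s.c., the set $V_+=\{w>-\infty\}$ is finely open; since $w=\underline H{}_f^h$ is finely u.s.c., the set $V_-=\{w<+\infty\}$ is finely open as well. Both are nonempty by the assumption $w\not\equiv\pm\infty$, and their union is all of $U$. If their intersection were empty they would form a partition of the finely connected fine domain $U$ into two nonempty finely open pieces, which is impossible.

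The main step uses the identity $\overline H{}_f^h=\dot H{}_f^h$ q.e.\ on $\{\overline H{}_f^h>-\infty\}$ recorded in Section~\ref{sec2}: I choose $x_0\in V_+\cap V_-$ with $\dot H{}_f^h(x_0)=w(x_0)<+\infty$ and then pick $u\in\overline{\cal U}{}_f^h$ with $u(x_0)<+\infty$. Because $u$ is lower bounded, finely $h$-hyperharmonic on the finely connected set $U$, and finite at $x_0$, it is actually finely $h$-superharmonic and therefore finite quasi-everywhere on $U$. Since $\overline H{}_f^h\le\dot H{}_f^h\le u$, this shows $\{\overline H{}_f^h=+\infty\}$ is polar; the symmetric argument applied to $-f$ gives $\{\underline H{}_f^h=-\infty\}$ polar, so $E_f^h$ is polar. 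The only non-routine ingredient I foresee is the passage from ``lower bounded finely $h$-hyperharmonic, finite at one point of a finely connected set'' to ``finely $h$-superharmonic (hence finite q.e.)''; this is a standard consequence of the fundamental convergence theorem of \cite{F1}, already tacitly used in the proof of Proposition~\ref{prop4.2}(b).
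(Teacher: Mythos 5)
Your proof is correct, but it takes a genuinely different route from the paper's. The paper argues topologically: it observes that $E_+=\{H_f^h=+\infty\}$ and $E_-=\{H_f^h=-\infty\}$ are disjoint and finely closed, decomposes $U\setminus E_-$ into its fine components, notes that each component either lies entirely in $E_+$ or meets it in a polar set (because $\overline H{}_f^h$ is finely $h$-hyperharmonic on $\{\overline H{}_f^h>-\infty\}$), and then runs a connectedness argument in $U\setminus P$ (using that deleting a polar set from a fine domain leaves a fine domain) to conclude that the union of the ``bad'' components must be empty. You instead exploit the fine connectedness of $U$ itself: your observation that $\{H_f^h>-\infty\}\cap\{H_f^h<+\infty\}$ is a nonempty finely open (hence nonpolar) set lets you pick $x_0$ with $\dot H{}_f^h(x_0)=\overline H{}_f^h(x_0)<+\infty$, extract a single $u\in\overline{\cal U}{}_f^h$ finite at $x_0$, and conclude that $u$ is finely $h$-superharmonic on the fine domain $U$ with polar infinity set, so that $\{\overline H{}_f^h=+\infty\}\subset\{u=+\infty\}$ is polar. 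Both arguments ultimately rest on the same basic fact from \cite{F1} (a finely hyperharmonic function on a fine domain that is not identically $+\infty$ has polar infinity set), a fact the paper also uses in the proof of Proposition \ref{prop4.2}(b); yours avoids the fine-component decomposition and the attendant countability issue, at the cost of invoking the fundamental-convergence-theorem identity $\overline H{}_f^h=\dot H{}_f^h$ q.e.\ on $\{\overline H{}_f^h>-\infty\}$, which is however already recorded in Section \ref{sec2}. Your reduction of $\{H_f^h=-\infty\}$ to the other case via $-f$ is also sound, since $-f$ is $h$-resolutive with $H_{-f}^h=-H_f^h$.
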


\begin{proof} The sets $E_+:=\{H_f^h=+\infty\}$ and $E_-:=\{H_f^h=-\infty\}$
are finely closed and disjoint. For any fine component $V$ of
$U\setminus E_-$ such that $V\cap E_+$ is nonpolar we have $V\cap E_+=V$,
that is, $V\subset E_+$, because $\overline H{}_f^h$ is finely $h$-hyperharmonic
on $V$. Denote by $W$ the union of these fine components $V$, and by $W'$ the
(countable) union of the remaining fine components $V'$ of $U\setminus E_-$.
Then $W\subset E_+$ whereas the set $P:=W'\cap E_+$ is polar along with each
$V'\cap E_+$. Since $E_+\cap E_-=\varnothing$ we obtain
$$
E_+=(U\setminus E_-)\cap E_+=(W\Cup W')\cap E_+
=(W\cap E_+)\Cup(W'\cap E_+)=W\Cup P,
$$
$\Cup$ denoting disjoint union.
Now, $(U\setminus P)\cap E_+=E_+\setminus P$ is finely closed relatively to
the nonvoid fine domain $U\setminus P$ (cf.\ \cite[Theorem 12.2]{F1}), but also
finely open, being equal to $W$ as seen from the above display. Thus either
$W=U\setminus P$ or $W=\varnothing$. But $W=U\setminus P$ would imply $E_+=U$,
contradicting $H_f^h\not\equiv+\infty$, and so actually $E_+=P$ (polar).
Similarly (or by replacing $f$ with $-f$) it is shown that $E_-$ is polar,
and so $f$ is $h$-quasiresolutive because
$\overline H{}_f^h=\underline H{}_f^h$ even holds everywhere on $U$.
\end{proof}

In view of Lemma \ref{lemma3.3} an $h$-quasi\-resolutive function
$f$ is $h$-resolutive if and only if $E_f^h=\varnothing$ (any polar subset
of $\Delta(U)$ being a proper subset). This implies that 1.\ and 2.\ in
the following proposition remain valid with `$h$-quasi\-resolutive' replaced
throughout by `$h$-resolutive'.
It will be shown in Corollary \ref{cor6.3b} that $h$-resolutivity and
$h$-quasi\-resolutivity are actually identical concepts.

\begin{prop}\label{prop6.3} Let $f,g:\Delta(U)\longrightarrow\overline{\RR}$
be $h$-quasi\-resolutive. Then

{\rm{1.}} For $\alpha\in\RR$ we have $E_{\alpha f}^h\subset E_f^h$ and hence
$\alpha f$ is $h$-quasi\-resolutive.
Furthermore, $H{}_{\alpha f}^h=\alpha H{}_f^h$ on $U\setminus E_f^h$.

{\rm{2.}} If $f+g$ is defined arbitrarily at points of undetermination then
$E_{f+g}^h\subset E_f^h\cup E_g^h$ and hence $f+g$ is $h$-quasi\-resolutive.
Furthermore, $H{}_{f+g}^h=H{}_f^h+H{}_g^h$ on $U\setminus(E_f^h\cup E_g^h)$.

{\rm{3.}} $E_{f\vee g}^h,E_{f\wedge g}^h$ $\subset E_f^h\cup E_g^h\cup P_f^h$ and
hence $f\vee g$ and $f\wedge g$ are $h$-quasi\-resolutive.
If for example $H_f^h\vee H_g^h\ge0$
then $H{}_{f\vee g}^h=(1/h){\widehat R}{}_{(H_f^h\vee H{}_g^h)h}$ on
$U\setminus(E_f^h\cup E_g^h\cup P_f^h)$.
\end{prop}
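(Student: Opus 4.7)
My plan is to handle Parts 1 and 2 as direct consequences of Propositions~\ref{prop6.1} and~\ref{prop6.2}, and to reserve the technical effort for Part 3. For Part 1 with $\alpha\ge0$, the result is the homogeneity noted just before Example~\ref{example2.0}: $\overline H_{\alpha f}^h=\alpha\overline H_f^h$ and similarly for $\underline H$ and $\dot H$. For $\alpha<0$, write $\alpha f=(-|\alpha|)f$ and apply the duality $\overline H_{-f}^h=-\underline H_f^h$ and $\underline H_{-f}^h=-\overline H_f^h$, immediate from $\underline{\cal U}_f^h=-\overline{\cal U}_{-f}^h$ together with the duality of the upper and lower regularizations. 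In either sign each of the three conditions defining $E_{\alpha f}^h$ is the image of a condition defining $E_f^h$ (with $\overline H,\underline H$ swapped when $\alpha<0$), so $E_{\alpha f}^h\subset E_f^h$ and $H_{\alpha f}^h=\alpha H_f^h$ on $U\setminus E_f^h$.

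For Part 2, Proposition~\ref{prop6.2}(2.) directly gives $\overline H_{f+g}^h\le\overline H_f^h+\overline H_g^h$ on $\{\overline H_f^h,\overline H_g^h>-\infty\}$; applied to $(-f,-g)$ and combined with Part 1 it yields the dual super-additivity $\underline H_{f+g}^h\ge\underline H_f^h+\underline H_g^h$ on $\{\underline H_f^h,\underline H_g^h<+\infty\}$. On $U\setminus(E_f^h\cup E_g^h)$ all four quantities are finite and equal in pairs to $H_f^h,H_g^h$; combined with the general inequality $\overline H_{f+g}^h\ge\underline H_{f+g}^h$ of Proposition~\ref{prop6.1}(b) a sandwich yields $H_{f+g}^h=H_f^h+H_g^h$ on that set, and a fortiori $E_{f+g}^h\subset E_f^h\cup E_g^h$.

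For Part 3, I first reduce $f\wedge g$ to $f\vee g$ via $f\wedge g=-((-f)\vee(-g))$ and Part 1. For the maximum, the easy direction is $\overline H_{f\vee g}^h\ge\overline H_f^h\vee\overline H_g^h$ (from $\overline{\cal U}_{f\vee g}^h\subset\overline{\cal U}_f^h\cap\overline{\cal U}_g^h$) and dually $\underline H_{f\vee g}^h\ge\underline H_f^h\vee\underline H_g^h$ (from $\underline{\cal U}_f^h\cup\underline{\cal U}_g^h\subset\underline{\cal U}_{f\vee g}^h$). Off $E_f^h\cup E_g^h$ the common lower bound reduces to the finite function $H_f^h\vee H_g^h$, finely $h$-super\-harmonic as a max of two such functions and, by hypothesis, $\ge0$.

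The main obstacle is the matching upper bound and its identification with $(1/h)\widehat R_{(H_f^h\vee H_g^h)h}$. For positive $u\in\overline{\cal U}_f^h$ and $v\in\overline{\cal U}_g^h$ (positivity arranged by adding a common constant, permissible under $H_f^h\vee H_g^h\ge0$), the pointwise max $u\vee v$ is not finely $h$-super\-harmonic, but the regularized sweep $(1/h)\widehat R_{(u\vee v)h}$ is, dominates $u\vee v$ quasieverywhere, and hence has the same fine $\liminf\ge f\vee g$ at every $Y\in\Delta(U)$, placing it in $\overline{\cal U}_{f\vee g}^h$. Thus $\dot H_{f\vee g}^h\le(1/h)\widehat R_{(u\vee v)h}$. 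Choosing decreasing sequences $u_j\downarrow\overline H_f^h$ and $v_j\downarrow\overline H_g^h$ in the respective classes (available by lower-directedness of the upper PWB$^h$-classes) and applying the Fundamental Convergence Theorem \cite[Theorem~11.8]{F1} to commute sweeping with decreasing limits yields $\dot H_{f\vee g}^h\le(1/h)\widehat R_{(\overline H_f^h\vee\overline H_g^h)h}$. Off $E_f^h\cup E_g^h$ the sweep equals $(H_f^h\vee H_g^h)h$ quasieverywhere (the argument is already finely super\-harmonic there), and off $P_f^h$ one has $\dot H_{f\vee g}^h=\overline H_{f\vee g}^h$ (the role of $P_f^h$ being precisely to record where this regularization is lossy); together with the easy lower bound this closes the sandwich and forces $E_{f\vee g}^h\subset E_f^h\cup E_g^h\cup P_f^h$. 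I expect the trickiest step to be the commutation of sweeping with the decreasing limit together with the careful accounting of the polar exceptional sets that ultimately produce the $P_f^h$ term, for which the Perron-family machinery of \cite{EF2} should provide the right tool.
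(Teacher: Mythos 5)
Parts 1 and 2 of your argument are correct and coincide with the paper's own proof (homogeneity and duality for 1; for 2 the sandwich $H_f^h+H_g^h\ge\overline H{}_{f+g}^h\ge\underline H{}_{f+g}^h\ge H_f^h+H_g^h$ off $E_f^h\cup E_g^h$, using Propositions \ref{prop6.2} and \ref{prop6.1}(b)). Part 3, however, contains two genuine gaps.

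First, the step ``apply the Fundamental Convergence Theorem to commute sweeping with decreasing limits'' is not a valid inference. The FCT tells you only that the regularized decreasing limit of the sweeps $\widehat R{}_{(u_j\vee v_j)h}$ is finely hyperharmonic and agrees q.e.\ with the pointwise infimum; it gives no comparison with $\widehat R{}_{(\overline H{}_f^h\vee\overline H{}_g^h)h}$, and sweeping is in general \emph{not} continuous along decreasing sequences: $\inf_j\widehat R{}_{w_j}$ can exceed $\widehat R{}_{\inf_jw_j}$ strictly (classically, take $w_j=1_{A_j}$ for a decreasing sequence of boundary collars $A_j$ of the unit disc, so that $\widehat R{}_{w_j}\equiv1$ while $\widehat R{}_{\inf_jw_j}=0$). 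To obtain your inequality one must bound $u_j\vee v_j\le(\overline H{}_f^h\vee\overline H{}_g^h)+(u_j-\overline H{}_f^h)+(v_j-\overline H{}_g^h)$ and control the positive finely $h$-superharmonic correction terms at a prescribed point by choosing $u_j(x)<\overline H{}_f^h(x)+2^{-j}$. This correction-series device is precisely the heart of the paper's proof, where the function $\frac1h\widehat R{}_{h\overline H{}_f^h\vee0}+\sum_{j\ge k}(u_j-H_f^h)$ is shown to majorize $u_k$ and hence to belong to $\overline{\cal U}{}_{f\vee 0}^h$; it cannot be waved away as a routine limit interchange.

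Second, your sandwich does not close. The parenthetical claim that the swept function equals $(H_f^h\vee H_g^h)h$ q.e.\ because ``the argument is already finely superharmonic there'' is false: $H_f^h\vee H_g^h$ is a maximum of finely $h$-harmonic functions and is therefore finely $h$-\emph{sub}harmonic, so $\frac1h\widehat R{}_{(H_f^h\vee H_g^h)h}$ is in general strictly larger than $H_f^h\vee H_g^h$ --- were they equal, the formula in Part 3 would collapse to $H_{f\vee g}^h=H_f^h\vee H_g^h$, which is false already in the classical setting. Consequently your lower bound $\underline H{}_{f\vee g}^h\ge\underline H{}_f^h\vee\underline H{}_g^h$ is too weak to meet your upper bound; what is needed is the stronger inequality $\underline H{}_{f\vee0}^h\ge\frac1h\widehat R{}_{h\underline H{}_f^h\vee0}$, which the paper derives from the fact that $\underline H{}_{f\vee0}^h$ majorizes both $\underline H{}_f^h$ and $0$. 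Note finally that the paper first reduces via $f\wedge g=-[(-f)\vee(-g)]$ and $f\vee g=[(f-g)\vee0]+g$ to the single case $g=0$; your symmetric treatment of $f\vee g$ would at best produce the exceptional set $E_f^h\cup E_g^h\cup P_f^h\cup P_g^h$ rather than the asymmetric one in the statement.
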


\begin{proof} For Assertion 1., consider separately the cases
$\alpha>0$, $\alpha<0$, and $\alpha=0$.
For 2.\ and 3. we proceed as in \cite[1.VIII.7 (d)]{Do}.
For 2.\ we have
\begin{eqnarray}
H_f^h+H_g^h\ge\overline H{}_{f+g}^h\ge\underline H{}_{f+g}^h\ge H_f^h+H_g^h
\end{eqnarray}
on $U\setminus(E_f^h\cup E_g^h)$, the first inequality by 2.\ in
Proposition \ref{prop6.2},
the third inequality by replacing $f$ with $-f$ in the
first inequality, and the second inequality holds by Proposition
\ref{prop6.1} (b) on
$$
\{\overline H_{f+g}^h>-\infty\}
\supset\{\overline H_f^h>-\infty\}\cap\{\overline H_g^h>-\infty\}
\supset U\setminus(E_f\cup E_g).
$$
Thus equality prevails on $U\setminus(E_f\cup E_g)$ (and hence q.e.\ on $U$)
in both of these inclusion relations. It follows that
$$
\{\overline H{}_{f+g}^h=-\infty\}
\subset\{\overline H{}_f^h=-\infty\}\cup\{\overline H{}_g^h=-\infty\}
\subset E_f^h\cup E_g^h.
$$
and similarly
$\{\underline H_{f+g}^h=-\infty\}\subset E_f^h\cup E_g^h$. Finally, by (3.1)
with equality throughout,
$$\{\overline H{}_{f+g}^h\ne\underline H{}_{f+g}^h\}
\subset\{\overline H{}_f^h\ne\underline H{}_f^h\}
\cup\{\overline H{}_g^h\ne\underline H{}_g^h\}
\subset E_f^h\cup E_g^h.
$$
Altogether, $E_{f+g}^h\subset E_f^h\cup E_g^h$,
and so $f+g$ is indeed $h$-quasi\-resolutive along with $f$ and $g$.

For the notation in the stated equation in 3., see \cite[Definition 11.4]{F1}.
Since $f\wedge g=-[(-f)\vee(-g)]$ and $f\vee g=[(f-g)\vee0]+g$  it follows by
1.\ and 2.\ that 3.\ reduces to $E_{f\vee0}^h\subset E_f\cup P_f^h$,
which implies the $h$-quasi\-resolutivity of $f^+=f\vee0$
and the stated expression for $H_{f\vee g}^h$ with $g=0$.
For given $x\in U\setminus(E_f^h\cup P_f^h)$ and integers $j>0$
choose $u_j\in\overline{\cal U}{}_f^h$ with
$u_j(x)\le\dot H{}_f^h(x)+2^{-j}=\overline H{}_f^h(x)+2^{-j}$.
The series $\sum_{j=k}^\infty(u_j-H_f^h)$ of
positive finely $h$-super\-harmonic functions on $U\setminus(E_f^h\cup P_f^h)$
($u_j$ being likewise restricted to $U\setminus(E_f^h\cup P_f^h)$) has a positive
finely $h$-super\-harmonic sum, finite at $x$.
Recall that $H_f^h$ is  defined and finely $h$-harmonic on $U\setminus E_f^h$
and in particular on $U\setminus(E_f^h\cup P_f^h)$. Consequently,
$H_f^h\vee0$ is finely $h$-subharmonic (and positive)
on $U\setminus(E_f^h\cup P_f^h)$ and majorized there by $\overline H{}_{f\vee0}^h$,
which is finite valued on $U\setminus(E_f^h\cup P_f^h)$ by 3.\ in Proposition
\ref{prop6.2} because $\overline H{}_f^h<+\infty$ on
$U\setminus(E_f^h\cup P_f^h)$ and because $\overline H{}_f^h=\dot H{}_f^h$
there. It follows by \cite[Theorem 11.13]{F1}, applied with $f$ replaced by
$h\overline H{}_f^h\vee0$ on $U$,
which is finely subharmonic on $U\setminus(E_f^h\cup P_f^h)$,
that $\frac1h{\widehat R}{}_{h\overline H{}_f\vee0}^h$ (sweeping relative to $U$)
is finely $h$-harmonic on $U\setminus(E_f^h\cup P_f^h)$,
being majorized there by
$\overline H{}_f^h\vee0\le\overline H{}_{f\vee0}^h<+\infty$.
The positive function
\begin{eqnarray}
\frac1h{\widehat R}{}_{h\overline H{}_f\vee0}^h
+\sum_{j=k}^\infty(u_j-H_f^h)
\end{eqnarray}
restricted to $U\setminus(E_f^h\cup P_f^h)$ is therefore finely
$h$-super\-harmonic.
Moreover, this positive finely  $h$-super\-harmonic function on
$U\setminus(E_f^h\cup P_f^h)$ majorizes $u_k\in{\overline{\cal U}}{}_f^h$ there
(being $\ge\frac1h{\widehat R}{}_{h\overline H{}_f\vee0}^h+(u_k-H_f^h)\ge u_k$ there),
and this majorization remains in force after extension by fine continuity
to $U$, cf.\ \cite[Theorem 9.14]{F1}. Thus the extended positive function (3.2)
belongs to $\overline{\cal U}{}_{f\vee 0}^h$. For $k\to\infty$ it follows that
${\overline H}{}_{f\vee 0}^h
\le\frac1h{\widehat R}{}_{h\overline H{}_{f\vee 0}^h}$
on $U$. On the other hand, $\underline H{}_{f\vee0}^h$ majorizes both
 $\underline H{}_f^h$ and $0$,
so $\underline H{}_{f\vee0}^h\ge\frac1h{\widehat R}{}_{h\underline H{}_f^h\vee0}
=\frac1h{\widehat R}{}_{h\overline H{}_f^h\vee0}$ on $U$, the equality because
$\underline H{}_f^h=\overline H{}_f^h$ on $U\setminus(E_f^h\cup P_f^h)$ and hence
q.e.\ on $U$.
It follows that
$${\overline H}{}_{f\vee0}^h\le\frac1h{\widehat R}{}_{h\overline H{}_f^h\vee0}
=\frac1h{\widehat R}{}_{h\underline H{}_f^h\vee0}
\le\underline H{}_{f\vee0}^h\le{\overline H}{}_{f\vee0}^h<+\infty
$$ because
$h\overline H{}_f^h\vee0=h\underline H{}_f^h\vee0$
on $U\setminus(E_f^h\cup P_f^h)$ and hence q.e.\ on $U$. (The last inequality
in the above display follows by Proposition \ref{prop6.1} (b) because
$f\vee0>-\infty$.)
Since $\overline H{}_{f\vee0}^h\ge0\ge-\infty$ we conclude that $f\vee0$
indeed is $h$-resolutive, resp.\ $h$-quasi\-resolutive, and that
$E_{f\vee0}^h\subset E_f^h\cup P_f^h$ and
$H_{f\vee0}^h=\frac1h{\widehat R}{}_{h\overline H{}_f^h\vee0}$ on $E_f^h\cup P_f^h$.
\end{proof}

A version of Proposition \ref{prop6.3} for $h$-resolutive
functions instead of $h$-quasi\-resolutive functions will
of course follow when the identity of $h$-resolutivity and
$h$-quasi\-resolutivity has been established in Corollary \ref{cor6.3b}.
Before that, we do however need the following step in that direction,
based on Proposition \ref{prop6.2}.

\begin{lemma}\label{lemma6.3c} Let $f$ be an $h$-quasi\-resolutive function
on $\Delta(U)$. If $f^+$ and $f^-$ are $h$-resolutive then so is $f$,
and the function
$H_f^h=\overline H{}_f^h=\underline H{}_f^h$ on $U$ is finite valued.
\end{lemma}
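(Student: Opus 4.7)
The plan is to apply Proposition~\ref{prop6.3} in the $h$-resolutive form legitimized by the remark just after Lemma~\ref{lemma3.3}, to the decomposition $f=f^{+}+(-f^{-})$. The first step is to note that this sum is literally well defined at every $Y\in\Delta(U)$: one cannot have $f^{+}(Y)=+\infty$ and $-f^{-}(Y)=-\infty$ simultaneously, since the former forces $f(Y)=+\infty$ and the latter forces $f(Y)=-\infty$. Hence $f^{+}+(-f^{-})=f$ pointwise on $\Delta(U)$, with no arbitrary choice at points of undetermination to deal with.

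Since $f^{-}$ is $h$-resolutive, so is $-f^{-}$ by the $h$-resolutive version of Proposition~\ref{prop6.3}(1), and in particular $E_{-f^{-}}^{h}=\varnothing$. Applying the $h$-resolutive version of Proposition~\ref{prop6.3}(2) to the two $h$-resolutive summands $f^{+}$ and $-f^{-}$ yields $E_{f}^{h}\subset E_{f^{+}}^{h}\cup E_{-f^{-}}^{h}=\varnothing$. Therefore $\overline H{}_{f}^{h}=\underline H{}_{f}^{h}$ everywhere on $U$ with common value $H_{f}^{h}=H_{f^{+}}^{h}-H_{f^{-}}^{h}$; in particular $f$ is $h$-resolutive, consistently with the given $h$-quasiresolutivity of $f$.

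It remains to show $H_{f}^{h}$ is finite valued. Because $f^{\pm}\ge 0$, the identically zero function lies in $\overline{\cal U}{}_{-f^{\pm}}^{h}$, so $\dot H{}_{-f^{\pm}}^{h}\le 0$ and hence $\underline H{}_{f^{\pm}}^{h}=-\overline H{}_{-f^{\pm}}^{h}\ge 0$ on $U$. On the other hand, $E_{f^{\pm}}^{h}=\varnothing$ forces $\underline H{}_{f^{\pm}}^{h}<+\infty$ everywhere. Combining, $0\le H_{f^{\pm}}^{h}<+\infty$ on $U$, so the difference $H_{f}^{h}=H_{f^{+}}^{h}-H_{f^{-}}^{h}$ is finite valued on all of $U$, as required.

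The only point really needing attention is the legitimacy of using Proposition~\ref{prop6.3} in its $h$-resolutive guise (justified by the preceding remark), together with the verification that the decomposition $f=f^{+}+(-f^{-})$ involves no arbitrary choice at undetermined points. Once these are in hand the conclusion follows by a direct application of parts (1) and (2) of that proposition combined with the nonnegativity of $f^{\pm}$, with no deeper input required.
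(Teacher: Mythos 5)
Your argument is correct and follows essentially the same route as the paper: both decompose $f$ as the everywhere well-defined sum $f^{+}+(-f^{-})$ and obtain $\overline H{}_f^h=\underline H{}_f^h=H_{f^{+}}^h-H_{f^{-}}^h$ (finite valued) from the finiteness of $H_{f^{\pm}}^h$, which rests on $f^{\pm}$ being positive, $h$-resolutive and $h$-quasi\-resolutive. The only cosmetic difference is that you invoke the $h$-resolutive form of Proposition~\ref{prop6.3}, parts 1 and 2, as a black box, whereas the paper unwinds this to a direct application of 2.\ in Proposition~\ref{prop6.2} to the sums $f=f^{+}-f^{-}$ and $-f=f^{-}-f^{+}$, combined with Proposition~\ref{prop6.1}~(b).
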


\begin{proof} According to 3.\ in Proposition \ref{prop6.3}, $f^+$ and $f^-$
are $h$-quasi\-resolutive (besides being $h$-resolutive), and the functions
$H_{f+}^h:=\overline H{}_{f^+}^h=\underline H{}_{f^+}^h$
(defined on $\{\overline H{}_{f_+}^h>-\infty\}=U$ since $f^+\ge0$)
and similarly $H_{f-}^h:=\overline H{}_{f^-}^h=\underline H{}_{f^-}^h$
are therefore finite valued. Since $-f^-\le f\le f^+$ it follows that
$-\infty<-H_{f^-}^h\le\underline H{}_f^h,\overline H{}_f^h\le H_{f^+}^h<+\infty$.
Applying 2.\ in Proposition \ref{prop6.2} to the sums
$f=f^++(-f)^+=f^+-f^-$ and $-f=f^--f_+$, which are well defined on $\Delta(U)$,
we obtain
$$
\overline H{}_f^h\le H_{f^+}^h-H_{f^-}^h\le\underline H{}_f^h
$$
on all of $U$, and hence $\overline H{}_f^h=\underline H{}_f^h$ there because
$\overline H{}_f^h\ge\underline H{}_f^h$ on all of $U$, again by
Proposition \ref{prop6.1} (b) since we have seen that for example
$\overline H{}_f^h>-\infty$.
\end{proof}

\begin{cor}\label{cor4.8} Let $(f_j)$ be an increasing sequence of lower
bounded $h$-resolutive, resp.\  $h$-quasi\-resolutive functions
$\Delta(U)\longrightarrow\,]-\infty,+\infty]$, and let $f=\sup_jf_j$.
If $\overline H{}_f^h\not\equiv+\infty$ then $f$ is
$h$-resolutive, resp.\ $h$-quasi\-resolutive.
 \end{cor}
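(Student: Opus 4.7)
The plan is to combine Proposition \ref{prop6.2}(4) with Proposition \ref{prop6.1}(b), after a harmless normalization to the case $f_j\ge0$. Since $f_1$ is lower bounded and $f_1\le f_j$, a constant $c$ is a common lower bound for the whole sequence; by the translation formulas $\overline H{}_{f-c}^h=\overline H{}_f^h-c$ and $\underline H{}_{f-c}^h=\underline H{}_f^h-c$ (from the remarks following Example \ref{example2.0}), the shift by $-c$ preserves $h$-(quasi)resolutivity of each $f_j$, the hypothesis $\overline H{}_f^h\not\equiv+\infty$, and the desired conclusion, so I may assume $f_j\ge0$; in particular $\overline H{}_f^h,\underline H{}_f^h\ge0$ on $U$.

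Next I would assemble a double inequality on $U$. Proposition \ref{prop6.2}(4) gives $\overline H{}_f^h=\sup_j\overline H{}_{f_j}^h$, while the monotonicity of $\underline H$ in its argument (immediate from $\underline H{}_g^h=-\overline H{}_{-g}^h$) together with $f_j\le f$ yields $\underline H{}_f^h\ge\sup_j\underline H{}_{f_j}^h$. Each $f_j$ being $h$-resolutive (resp.\ $h$-quasi\-resolutive), one has $\overline H{}_{f_j}^h=\underline H{}_{f_j}^h$ on $U$ (resp.\ off a polar set $E_{f_j}^h$), so writing $E:=\bigcup_jE_{f_j}^h$ (empty in the resolutive case, polar in the quasi-resolutive case), one gets on $U\setminus E$
$$
\underline H{}_f^h\ge\sup_j\underline H{}_{f_j}^h=\sup_j\overline H{}_{f_j}^h=\overline H{}_f^h.
$$
For the reverse inequality, $\overline H{}_f^h\ge0>-\infty$ on $U$, so Proposition \ref{prop6.1}(b) gives $\overline H{}_f^h\ge\underline H{}_f^h$ everywhere. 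Combining, $\overline H{}_f^h=\underline H{}_f^h$ everywhere (resolutive case) or off $E$ (quasi-resolutive case).

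To finish, in the resolutive case the common value is $\ge0$, hence not identically $-\infty$, and not identically $+\infty$ by the standing hypothesis; Definition \ref{def6.9} then gives $h$-resolutivity of $f$. In the quasi-resolutive case I must verify $E_f^h$ is polar: the summand $\{\overline H{}_f^h\ne\underline H{}_f^h\}$ lies in $E$ and $\{\overline H{}_f^h=-\infty\}$ is empty, so the issue reduces to $\{\underline H{}_f^h=+\infty\}$. For this I would invoke Proposition \ref{prop6.1}(c): since $f\ge0$ and $\overline H{}_f^h\not\equiv+\infty$, $\overline H{}_f^h$ is $h$-invariant, so the positive finely super\-harmonic function $h\overline H{}_f^h$ is finite q.e.\ on $U$, whence $\{\overline H{}_f^h=+\infty\}$ is polar and so is $\{\underline H{}_f^h=+\infty\}\subset E\cup\{\overline H{}_f^h=+\infty\}$. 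The delicate point of the argument is precisely this last polar-set bookkeeping: controlling $\{\underline H{}_f^h=+\infty\}$ needs the near-finiteness of $h\overline H{}_f^h$, which in turn relies on the $h$-invariance from Proposition \ref{prop6.1}(c) and hence on the standing hypothesis $\overline H{}_f^h\not\equiv+\infty$.
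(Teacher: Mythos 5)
Your proof is correct and follows essentially the same route as the paper's: reduce to $f_j\ge0$, obtain $\underline H{}_f^h\ge\sup_j\underline H{}_{f_j}^h=\sup_j\overline H{}_{f_j}^h=\overline H{}_f^h$ (off the polar union of the $E_{f_j}^h$ in the quasi-resolutive case) via Proposition \ref{prop6.2}(4), and get the reverse inequality from Proposition \ref{prop6.1}(b) since $\overline H{}_f^h\ge0>-\infty$. Your explicit polar-set bookkeeping for $\{\underline H{}_f^h=+\infty\}$ via the $h$-invariance from Proposition \ref{prop6.1}(c) is exactly the "finite q.e." step the paper also invokes.
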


\begin{proof} By adding a constant to $f$ we reduce the claim to the case
$f_j\ge0$. For every $j$ we have
$\underline H{}_f^h\ge\underline H{}_{f_j}^h=\overline H{}_{f_j}^h$,
by Proposition \ref{prop6.1} (b) because $\overline H{}_{f_j}^h>-\infty$. Hence
$\underline H{}_f^h\ge\sup_j\overline H{}_{f_j}^h=\overline H{}_f^h$
according to 4.\ in Proposition \ref{prop6.2}.
By definition of $\underline H{}_f^h$ we have at any point $x_0\in U$
$$
\underline H{}_f^h(x_0)=\underset{x\to x_0,\,x\in U}{\fine\lim\sup}\,
{H}{\vrule width 0pt depth 0.2em }_{\kern-0.7em\hbox{.}}\,\,{}_f^h(x)
\le\underset{x\to x_0,\,x\in U}{\fine\lim\sup}\,
\overline H{}_f^h(x)
=\overline H{}_f^h(x_0)
$$
according to Proposition \ref{prop6.1} (a), $\overline H{}_f^h$ being finely
$h$-hyper\-harmonic by Proposition \ref{prop6.1} (c). By Proposition
\ref{prop6.1} (b) we have
$\underline H{}_f^h\le\overline H{}_f^h$ on $\{\overline H{}_f^h>-\infty\}=U$,
and we conclude that $\underline H{}_f^h=\overline H{}_f^h$. By hypothesis
this finely hyperharmonic function on $U$ is finite q.e.\ on $U$, and in
particular this positive function is not identically $+\infty$.
Consequently, $f$ is indeed $h$-resolutive, resp.\ $h$-quasi\-resolutive.
\end{proof}

Recall that $\mu_h$ denotes the unique measure on $\overline U$
carried by $\Delta_1(U)$ and representing $h$,
that is, $h=K\mu_h=\int K(.,Y)d\mu_h(Y)$.

\begin{prop}\label{prop6.4} For any $\mu_h$-measurable subset $A$
of $\Delta(U)$ the indicator function $1_A$ is $h$-resolutive, and
\begin{eqnarray}H{}_{1_A}^h\!\!\!
&=&\!\!\!\frac{1}{h}\int_AK(.,Y)d\mu_h(Y)=\frac1h\widehat R{}_h^A
\end{eqnarray}
on $U$.
In particular, the constant function $1$ on $\Delta(U)$ is
$h$-resolutive and $H_1^h=1$.
\end{prop}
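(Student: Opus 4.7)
The plan is to handle $f\equiv 1$ first and then derive the general formula by exploiting the decomposition $1_A+1_{A^c}=1$ together with Proposition~\ref{prop6.2}(2).

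For $f\equiv 1$, the constant $u\equiv 1$ on $U$ lies in both $\overline{\cal U}{}_1^h$ and $\underline{\cal U}{}_1^h$, since $h$ is finely harmonic and trivially $\liminf_{x\to Y,\,x\in U}1=1$ at every $Y\in\Delta(U)$. Thus $\overline H{}_1^h\le 1\le\underline H{}_1^h$, and Proposition~\ref{prop6.1}(b) gives the reverse inequality, so $H_1^h=1$. This agrees with the stated formula at $A=\Delta(U)$, since $\widehat R{}_h^{\Delta(U)}=h=\int_{\Delta(U)}K(\cdot,Y)d\mu_h(Y)$ by construction of $\mu_h$.

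The main step is the pointwise upper bound
$$
\overline H{}_{1_A}^h\cdot h\;\le\;\widehat R{}_h^A\qquad\text{on }U.
$$
I would prove this by unwinding the definition of the sweeping $\widehat R{}_h^A$ on subsets of $\Delta(U)$ from \cite{EF2}: a positive finely superharmonic $v$ on $U$ with natural-topology $\liminf_{x\to Y}v(x)\ge h(Y)$ for $Y\in A$ gives $u:=v/h\in\overline{\cal U}{}_{1_A}^h$, hence $\dot H{}_{1_A}^h\cdot h$ is majorized by every such $v$; passing to the infimum and then finely l.s.c.\ regularizing yields the bound, via the identification $\widehat R{}_h^A=\int_A K(\cdot,Y)d\mu_h(Y)$ from \cite{EF2}. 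Applying the same bound to $A^c:=\Delta(U)\setminus A$, using additivity $\widehat R{}_h^A+\widehat R{}_h^{A^c}=h$ (immediate from the integral representation), Proposition~\ref{prop6.2}(2), and $H_1^h=1$, I chain
$$
1=\overline H{}_{1_A+1_{A^c}}^h\le\overline H{}_{1_A}^h+\overline H{}_{1_{A^c}}^h\le(1/h)\bigl(\widehat R{}_h^A+\widehat R{}_h^{A^c}\bigr)=1,
$$
forcing equality throughout, and hence $\overline H{}_{1_A}^h=(1/h)\widehat R{}_h^A$. For the lower class, using $\underline H{}_f^h=-\overline H{}_{-f}^h$ together with the constant-shift identity applied to $-1_A=1_{A^c}-1$ gives $\underline H{}_{1_A}^h=1-\overline H{}_{1_{A^c}}^h=(1/h)\widehat R{}_h^A$. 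Since this common value is a finite $h$-invariant function on $U$ (and $h>0$), $1_A$ is $h$-resolutive with $H{}_{1_A}^h=(1/h)\widehat R{}_h^A$.

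The main technical obstacle is the upper bound displayed above: proving it requires bridging the natural-topology PWB definition (liminf in the Riesz--Martin topology at points of $\Delta(U)$) with the sweeping $\widehat R{}_h^A$ on subsets of the Martin boundary, relying on the sweeping apparatus and integral representation developed in \cite{EF2}. Extending from Borel to arbitrary $\mu_h$-measurable $A$ is then routine via Proposition~\ref{prop4.2}(e), absorbing symmetric differences into $h$-harmonic measure null sets.
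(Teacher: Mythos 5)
Your argument is correct and follows essentially the same route as the paper: the key inequality $h\,\overline H{}_{1_A}^h\le\widehat R{}_h^A$ obtained by recognizing the competitors defining the sweeping onto $A$ as members of $\overline{\cal U}{}_{1_A}^h$, then the complementation/subadditivity chain $1\ge\overline H{}_{1_A}^h+\overline H{}_{1_{\Delta(U)\setminus A}}^h\ge\overline H{}_1^h=1$ forcing equality, together with the identification $\widehat R{}_h^A=\int_AK(\cdot,Y)\,d\mu_h(Y)$. The paper takes that identification directly from \cite[Theorem 3.10 and Proposition 3.9]{EF2} for $\mu_h$-measurable $A$ (so no separate Borel-to-measurable extension is needed), and its competitors are the functions $u\ge1$ on a natural-topology open $W\supset A$ in $\overline U$ per \cite[Definition 2.4]{EF2}, which is the precise form of the "unwinding" you sketch.
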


\begin{proof} Because $h=K\mu_h$ and because $\mu_h$ is carried by
$\Delta_1(U)$ we have by \cite[Theorem 3.10 and Proposition 3.9]{EF2}
$$
\widehat R{}_h^A=\widehat R{}_{K\mu_h}^A
=\int_{\Delta_1(U)}\widehat R{}_{K(.,Y)}^Ad\mu_h(Y)
=\int K(.,Y)1_A(Y)d\mu_h(Y).
$$
Consider any finely $h$-hyper\-harmonic function $u=v/h\ge0$ on $U$ such that
$u\ge1$ on some open set $W\subset\overline U$ with $W\supset A$. Then
$u\in\overline{\cal U}{}_{1_A}^h$ and hence
$u\ge\dot H{}_{1_A}^h\ge\overline H{}_{1_A}^h$.
By varying $W$ it follows by \cite[Definition 2.4]{EF2} that
$\frac1h\widehat R{}_h^A\ge\overline H{}_{1_A}^h$.
We have
\begin{eqnarray}\frac1h\int K(.,Y)1_A(Y)d\mu_h(Y)
=\frac1h \widehat R{}_h^A\ge{\overline H}{}_{1_A}^h.
\end{eqnarray}
Applying this inequality to the $\mu_h$-measurable set $\Delta(U)\setminus A$
in place of $A$ we obtain
\begin{eqnarray}\frac{1}{h}\int K(.,Y)1_{\Delta(U)\setminus A}(Y)d\mu_h(Y)
\ge {\overline H}{}_{1_{\Delta(U)\setminus A}}^h.
\end{eqnarray}
By adding the left hand, resp.\ right hand, members of (3.4) and (3.5)
this leads by 2.\ in Proposition \ref{prop6.2} to
\begin{eqnarray}1=\frac1h\int K(.,Y)d\mu_h(Y)
\ge{\overline H}{}_{1_A}^h+{\overline H}{}_{1_{\Delta(U)\setminus A}}^h
\ge{\overline H}{}_1^h=1 .
\end{eqnarray}
Thus equalities prevail throughout in (3.4), (3.5), and (3.6). It follows
altogether that
\begin{eqnarray*}\underline H{}_{1_A}^h\!\!\!
&=&-\overline H{}_{-1_A}^h=1-\overline H{}_{1-1_A}^h
=1-\overline H{}_{1_{\Delta(U)\setminus A}}^h\\
&=&\!\!\!\overline H{}_{1_A}^h
=\frac1h\widehat R{}_h^A=\frac{1}{h}\int_AK(.,Y)d\mu_h(Y),
\end{eqnarray*}
so that indeed $1_A$ is $h$-resolutive and (3.3) holds.
\end{proof}

For any function $f:\Delta(U)\longrightarrow\overline\RR$ we define
$f(Y)K(x,Y)=0$ at points $(x,Y)$ where $f(Y)=0$ and $K(x,Y)=+\infty$.
If $f$ is $\mu_h$-measurable then so is $Y\longmapsto f(Y)K(x,Y)$ for each
$x\in U$ because $K(x,Y)>0$ is $\mu_h$-measurable (even l.s.c.) as a function
of $Y\in\Delta(U)$ according to \cite[Proposition 2.2 (i)]{EF1}.

\begin{prop}\label{prop6.5} Let $f$ be a $\mu_h$-measurable
lower bounded function on $\Delta(U)$. Then
$$
{\overline H}{}_f^h=\frac{1}{h}\int f(Y)K(.,Y) d\mu_h(Y)>-\infty,
$$
and ${\overline H}{}_f^h$ is either identically $+\infty$ or the sum of
an $h$-invariant function and a constant $\le0$.
Furthermore $f$ is $h$-quasi\-resolutive if and only if $f$ is $h$-resolutive,
and that holds if and only if
$\frac{1}{h}\int f(Y)K(.,Y)d\mu_h(Y)<+\infty$
q.e.\ on $U$, or equivalently: everywhere on $U$.
In particular, every bounded $\mu_h$-measurable function
$f:\Delta(U)\longrightarrow\RR$ is $h$-resolutive.
\end{prop}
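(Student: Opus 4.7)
The natural approach is to bootstrap from indicator functions (Proposition~\ref{prop6.4}) through simple functions and on to general nonnegative $\mu_h$-measurable $f$ by monotone convergence, and finally extend to lower bounded $f$ via a shift. First, reduce to $f\ge 0$: if $f\ge -c$ with $c\ge 0$, then $f+c\ge 0$, and the affine identity $\overline H_{f+c}^h=\overline H_f^h+c$ noted in Section~\ref{sec2}, together with the obvious shift of the integral by $c$, reduces the problem to the nonnegative case; the constant $-c\le 0$ reintroduced at the end accounts for the ``sum of an $h$-invariant function and a constant $\le 0$'' form.

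For simple $f=\sum_{i=1}^n c_i 1_{A_i}$ with $c_i>0$ and pairwise disjoint $\mu_h$-measurable $A_i\subset\Delta(U)$, Proposition~\ref{prop6.4} supplies $h$-resolutivity and the integral formula for each $1_{A_i}$; combining with the positive homogeneity and additivity in Proposition~\ref{prop6.3}---which by the remark after Lemma~\ref{lemma3.3} extends to $h$-resolutive functions---yields $h$-resolutivity of $f$ with $H_f^h=\frac{1}{h}\int fK\,d\mu_h$. For general nonnegative $\mu_h$-measurable $f$, choose simple $f_n\uparrow f$. Assertion 4 of Proposition~\ref{prop6.2} gives $\overline H_f^h=\sup_n\overline H_{f_n}^h$, and classical monotone convergence on the integrals $\sup_n\int f_nK\,d\mu_h=\int fK\,d\mu_h$ yields the identity $\overline H_f^h=\frac{1}{h}\int fK\,d\mu_h$ on $U$. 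The symmetric bound $\underline H_f^h\ge\sup_n\underline H_{f_n}^h=\sup_n H_{f_n}^h=\overline H_f^h$, combined with Proposition~\ref{prop6.1}\,(b) (applicable since $f\ge 0$ makes $\overline H_f^h>-\infty$), then forces $\overline H_f^h=\underline H_f^h$ on all of $U$.

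Proposition~\ref{prop6.1}\,(c) supplies the invariance dichotomy: $\overline H_f^h$ is either identically $+\infty$ or $h$-invariant, and in the latter case $h\overline H_f^h\in\mathcal{S}(U)$ is automatically finite q.e. Since $\overline H_f^h=\underline H_f^h$ already holds everywhere, the set $E_f^h$ reduces to $\{\overline H_f^h=+\infty\}$, and the three conditions ``$h$-resolutive'', ``$h$-quasi-resolutive'', and ``$\frac{1}{h}\int fK\,d\mu_h<+\infty$ q.e.'' all coincide. The bounded case is then immediate from $\frac{1}{h}\int fK\,d\mu_h\le\|f\|_\infty\cdot\frac{1}{h}\int K\,d\mu_h=\|f\|_\infty$, finite everywhere.

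The main obstacle is the promised sharpening to ``equivalently everywhere''. Since an $h$-invariant function may in principle take the value $+\infty$ on a polar subset, fine lower semicontinuity of $\overline H_f^h$ alone does not obviously rule this out. I expect the argument to exploit that no nonempty finely open subset of $U$ is polar: if $\overline H_f^h(x_0)=+\infty$, then fine lower semicontinuity would force $\overline H_f^h>M$ on a fine neighborhood of $x_0$ for every $M$, producing a non-polar set on which $\overline H_f^h$ is arbitrarily large, incompatible with finiteness q.e.
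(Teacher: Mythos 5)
Your overall route coincides with the paper's: reduce to $f\ge0$ by a shift, pass from indicator functions (Proposition \ref{prop6.4}) through step functions (using 1.\ and 2.\ of Proposition \ref{prop6.3} in their $h$-resolutive form) to general positive $\mu_h$-measurable $f$ by Assertion 4 of Proposition \ref{prop6.2}, and obtain $\overline H{}_f^h=\underline H{}_f^h$ from the symmetric lower bound together with Proposition \ref{prop6.1}\,(b). The only structural difference is that the paper runs the monotone-limit argument for resolutivity (Corollary \ref{cor4.8}) on the bounded truncations $f\wedge j$, after a separate treatment of bounded $f$, whereas you run it directly on the step functions; this is immaterial.

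There is, however, a genuine gap at exactly the step you flag as the main obstacle: the passage from ``$\frac1h\int f(Y)K(\cdot,Y)\,d\mu_h(Y)<+\infty$ q.e.'' to ``$<+\infty$ everywhere.'' Your argument for it is fallacious. Fine lower semicontinuity at a point $x_0$ with $\overline H{}_f^h(x_0)=+\infty$ gives, for each $M$, a fine neighbourhood $V_M$ of $x_0$ on which $\overline H{}_f^h>M$; but these neighbourhoods shrink as $M$ grows, and ``arbitrarily large on non-polar sets'' is perfectly compatible with ``finite q.e.'' Indeed, a finely l.s.c.\ finely hyperharmonic function can be finite q.e.\ and still $+\infty$ at a point: the fine Green potential $G_U(\cdot,y)$ is finite off the polar set $\{y\}$ yet exceeds every $M$ on a fine neighbourhood of $y$, and the paper's Example \ref{example2.0} exhibits precisely a lower PWB-function that is $+\infty$ at one point of $U$ and finite elsewhere. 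So your reasoning pattern would ``prove'' something false, and finiteness everywhere cannot be extracted from fine lower semicontinuity plus finiteness q.e.\ alone; one must exploit the specific structure of $\overline H{}_f^h$. The paper's route is to conclude first, via Corollary \ref{cor4.8} applied to the bounded (hence $h$-resolutive) truncations $f\wedge j$, that $f$ itself is $h$-resolutive, and to deduce finiteness everywhere from that resolutivity; you need to supply an argument of this kind in place of the one you sketch.
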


\begin{proof} Consider first the case of a positive $\mu_h$-measurable
function $f$. Then $f$ is the pointwise supremum of an
increasing sequence of positive $\mu_h$-measurable step
functions $f_j$ (that is, finite valued functions $f_j$ taking only finitely
many values, each finite and each on some $\mu_h$-measurable set; in other
words: affine combinations of indicator functions of $\mu_h$-measurable sets).
For any index $j$ it follows by Proposition \ref{prop6.4} and by 1.\ and 2.\
in Proposition \ref{prop6.3} (the latter extended to finite sums
and with `$h$-resolutive' throughout in place of
`$h$-quasi\-resolutive', cf.\ the paragraph preceding Proposition
\ref{prop6.3}) that each $f_j$ is $h$-resolutive and that
$$
H_{f_j}^h=\frac{1}{h}\int f_j(Y)K(.,Y)d\mu_h(Y)
$$
on $U$, whence
\begin{eqnarray*}0\!\!\!
&\le&\!\!\!\frac{1}{h}\int f(Y)K(.,Y)d\mu_h(Y)\\
&=&\!\!\!\frac1{h}\sup_j\int f_j(Y)K(.,Y)d\mu_h(Y)
=\sup_j H_{f_j}^h={\overline H}{}_f^h
\end{eqnarray*}
by 4.\ in Proposition \ref{prop6.2}.
For a general lower bounded $\mu_h$-measurable function $f$ on $\Delta(U)$
there is a constant $c\ge0$ such that
$g:=f+c\ge0$ and hence $\overline {}H_g^h=\overline {}H_f^h+c\ge0$.
It follows that
$$
{\overline H}{}_f^h=\frac1h\int g(Y)K(.,Y)d\mu_h(Y)-c
=\frac1h\int f(Y)K(.,Y)d\mu_h(Y)>-\infty
$$
and hence by Proposition \ref{prop6.1} (c) applied to $g$ that
${\overline H}{}_f^h=\overline H{}_g^h-c$ has the asserted form.

Next, consider a bounded $\mu_h$-measurable function $f$ on $\Delta(U)$.
As just shown, we have
$${\overline H}{}_f^h=\frac{1}{h}\int f(Y)K(.,Y)d\mu_h(Y)$$
and the same with $f$ replaced by $-f$, whence
${\overline H}{}_f^h={\underline H}{}_f^h$,
finite valued because $f$ is bounded. Thus $f$ is $h$-resolutive.
Let $c\ge0$ be a constant such that $|f|\le c$.
Then ${\overline H}{}_f^h=c-{\underline H}{}_{c-f}^h$
which is finely $h$-harmonic because $c-f\ge0$ and so $H_{c-f}^h$ is
$h$-invariant by Proposition \ref{prop6.1} (c) and
hence finely $h$-harmonic, being finite valued.

Returning to a general lower bounded $\mu_h$-measurable function $f$,
suppose first that $f$ is $h$-quasi\-resolutive. Then, as shown in the first
paragraph of the proof,
$\frac1h\int f(Y)K(.,Y)d\mu_h(Y)={\overline H}_f^h$ is finite q.e.\ on $U$.
Conversely, if $\frac1h\int f(Y)K(.,Y)d\mu_h(Y)<+\infty$ q.e.,
that is $\overline H{}_f^h\not\equiv+\infty$,
then Corollary \ref{cor4.8} applies to the increasing
sequence of bounded $\mu_h$-measurable and hence $h$-resolutive functions
$f\wedge j$ converging to $f$, and we conclude that $f$ is $h$-resolutive
(in particular $h$-quasi\-resolutive) and hence that
$\frac1h\int f(Y)K(.,Y)d\mu_h(Y)={\overline H}_f^h$ is finite everywhere on $U$.
\end{proof}

\begin{cor}\label{cor4.10a} Let $f:\Delta(U)\longrightarrow\overline\RR$
be $\mu_h$-measurable. Then $f$ is $h$-resolutive
if and only if $|f|$ is $h$-resolutive.
\end{cor}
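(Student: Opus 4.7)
The plan is to reduce Corollary~\ref{cor4.10a} to Proposition~\ref{prop6.5} by splitting both $f$ and $|f|$ into their positive and negative parts, using that $f^+,f^-\ge0$ are $\mu_h$-measurable whenever $f$ is, and that Proposition~\ref{prop6.5} already identifies $h$-resolutivity with $h$-quasi\-resolutivity for lower bounded $\mu_h$-measurable functions. The only delicate point, where Proposition~\ref{prop6.5} does not apply directly, is reconstructing the $h$-resolutivity of $f$ itself in the converse direction (since $f$ need not be lower bounded); Lemma~\ref{lemma6.3c} is exactly the device to handle that.

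For the forward implication I would argue as follows. Starting from $f$ $h$-resolutive, Proposition~\ref{prop6.3}(1) gives that $-f$ is also $h$-resolutive, and Lemma~\ref{lemma3.3} makes both of them $h$-quasi\-resolutive. The constant $0$ is trivially $h$-resolutive with $H_0^h=0$, so Proposition~\ref{prop6.3}(3) yields that $f^+=f\vee 0$ and $f\wedge 0$ are $h$-quasi\-resolutive, and Proposition~\ref{prop6.3}(1) then gives that $f^-=-(f\wedge 0)$ is also $h$-quasi\-resolutive. Being positive and $\mu_h$-measurable, both $f^\pm$ are upgraded to $h$-resolutive by Proposition~\ref{prop6.5}. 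Since $f^+,f^-\ge0$ the sum $|f|=f^++f^-$ is pointwise well defined with no undetermined values, so Proposition~\ref{prop6.3}(2) makes $|f|$ $h$-quasi\-resolutive, and a final appeal to Proposition~\ref{prop6.5} (applied to the positive $\mu_h$-measurable function $|f|$) promotes it to $h$-resolutive.

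For the converse, assuming $|f|$ is $h$-resolutive, I would compare integrals: $0\le f^\pm\le|f|$ pointwise, and Proposition~\ref{prop6.5} applied to the positive $\mu_h$-measurable functions $f^\pm$ and $|f|$ gives
$$
\overline H{}_{f^\pm}^h
=\frac1h\int f^\pm(Y)K(.,Y)d\mu_h(Y)
\le\frac1h\int|f|(Y)K(.,Y)d\mu_h(Y)
=\overline H{}_{|f|}^h<+\infty
$$
everywhere on $U$. By the integrability criterion in Proposition~\ref{prop6.5}, $f^+$ and $f^-$ are therefore both $h$-resolutive. Next, since $f^+$ and $f^-$ have disjoint supports and in particular are never simultaneously $+\infty$, the sum $f=f^++(-f^-)$ is defined everywhere, so Proposition~\ref{prop6.3}(1), (2) renders $f$ $h$-quasi\-resolutive. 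Finally, with $f$ $h$-quasi\-resolutive and both positive and negative parts $f^\pm$ already $h$-resolutive, Lemma~\ref{lemma6.3c} completes the argument by declaring $f$ itself $h$-resolutive.

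The only genuine obstacle, confined to the converse direction, is that Proposition~\ref{prop6.5} does not speak to functions that fail to be lower bounded; Lemma~\ref{lemma6.3c} is engineered precisely to cross that gap by upgrading $h$-quasi\-resolutivity to $h$-resolutivity whenever both positive and negative parts are already $h$-resolutive. Everything else is formal bookkeeping with the arithmetic of $h$-quasi\-resolutive functions and the pointwise well-definedness of the sums $f^++f^-$ and $f^++(-f^-)$.
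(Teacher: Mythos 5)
Your argument is correct and follows essentially the same route as the paper's own proof: Lemma \ref{lemma3.3} and Proposition \ref{prop6.3} to pass to $h$-quasi\-resolutivity of $|f|$ (the paper writes $|f|=f\vee(-f)$ where you use $f^++f^-$), Proposition \ref{prop6.5} to upgrade to $h$-resolutivity and extract the $\mu_h$-integrability of $|f|K(x,\cdot)$, and then Lemma \ref{lemma6.3c} via $f^\pm$ for the converse. Your explicit verification that $f=f^++(-f^-)$ is $h$-quasi\-resolutive before invoking Lemma \ref{lemma6.3c} is a point the paper leaves implicit, but it is the same argument.
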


\begin{proof} If $f$ is $h$-resolutive, and therefore $h$-quasiresolutive by
Lemma \ref{lemma3.3}, then $|f|=f\vee(-f)$ is
$h$-quasi\-resolutive according to 3.\ and 1.\ in Proposition \ref{prop6.3}.
Since $|f|$ is lower bounded (and $\mu_h$-measurable) then by Proposition
\ref{prop6.5} $|f|$ is even $h$-resolutive and $|f|K(x,.)$ is
$\mu_h$-integrable for every $x\in U$. So are therefore $f^+K(x,.)$ and
$f^-K(x,.)$, and it follows, again by Proposition \ref{prop6.5},
that $f^+$ and $f^-$ are $h$-resolutive.
So is therefore $f=f^+-f^-$ by Lemma \ref{lemma6.3c}.
\end{proof}

\begin{prop}\label{prop6.10b} Every $h$-quasi\-resolutive
function $f:\Delta(U)\longrightarrow\overline\RR$ is $\mu_h$-measurable.
\end{prop}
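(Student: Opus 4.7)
The plan is to sandwich $f$ between two $\mu_h$-measurable functions $\psi\le f\le\phi$ whose upper and lower PWB-functions on $U$ coincide, and then invoke Proposition~\ref{prop6.5} together with the positivity of $K$ to force $\phi=\psi$ $\mu_h$-almost everywhere.

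First I reduce to the case $0\le f\le M$. Since $f^+f^-\equiv 0$, the decomposition $f=f^+-f^-$ is pointwise defined, and Proposition~\ref{prop6.3} (using in particular that the constant $1$ is $h$-resolutive by Proposition~\ref{prop6.4}, so constants are $h$-quasi\-resolutive) shows that $f^\pm$ and each truncation $f^\pm\wedge n$ are $h$-quasi\-resolutive. A countable sup of $\mu_h$-measurable functions being $\mu_h$-measurable, it suffices to treat bounded nonnegative $h$-quasi\-resolutive $f$.

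For such $f$, the lower-directedness of $\overline{\cal U}^h_f$ lets me pick a decreasing sequence $(u_j)\subset\overline{\cal U}^h_f$ with $\inf_j u_j=\dot H^h_f$, and I set
$$
\tilde u_j(Y):=\underset{x\to Y,\,x\in U}{\liminf}\,u_j(x)\qquad(Y\in\Delta(U)).
$$
The boundary liminf $\tilde u_j$ is lower semi\-continuous on $\Delta(U)$ in the natural topology, hence Borel, hence $\mu_h$-measurable; and $\tilde u_j\ge f$ because $u_j\in\overline{\cal U}^h_f$. Put $\phi:=\inf_j\tilde u_j\ge f\ge0$, still $\mu_h$-measurable. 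Symmetrically, an increasing $(v_j)\subset\underline{\cal U}^h_f$ realising the pre-regularized lower PWB-function gives u.s.c.\ boundary limsups $\tilde v_j\le f$ and a $\mu_h$-measurable $\psi:=\sup_j\tilde v_j\le f$.

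The crucial observation is that $\liminf_{x\to Y}u_j(x)=\tilde u_j(Y)\ge\phi(Y)$ by the very definition of $\phi$, so $u_j\in\overline{\cal U}^h_\phi$; this gives $\dot H^h_\phi\le u_j$, and regularizing yields $\overline H^h_\phi\le\overline H^h_f$, which combined with the trivial reverse inequality from $\phi\ge f$ yields $\overline H^h_\phi=\overline H^h_f$. The symmetric argument produces $\underline H^h_\psi=\underline H^h_f$. Since $\phi\ge0$ is $\mu_h$-measurable with $\overline H^h_\phi=\overline H^h_f<+\infty$ q.e.\ (by $h$-quasi\-resolutivity of $f$), Proposition~\ref{prop6.5} makes $\phi$ $h$-resolutive and gives $H^h_\phi=\frac1h\int\phi(Y)K(\cdot,Y)\,d\mu_h(Y)$; the same applied to $-\psi\ge-M$ yields $\underline H^h_\psi=\frac1h\int\psi(Y)K(\cdot,Y)\,d\mu_h(Y)$. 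Quasi\-resolutivity of $f$ forces $\overline H^h_\phi=\underline H^h_\psi$ q.e., hence everywhere on $U$ by fine continuity of these finely $h$-harmonic functions; for any $x\in U$ we therefore have $\int(\phi-\psi)(Y)K(x,Y)\,d\mu_h(Y)=0$. As $\phi-\psi\ge0$ is $\mu_h$-measurable and $K(x,\cdot)>0$ on $\Delta_1(U)\supset\Supp\mu_h$, this forces $\phi=\psi$ $\mu_h$-a.e.; hence $f=\phi$ $\mu_h$-a.e.\ and $f$ is $\mu_h$-measurable. The main subtlety is the sharpness of $\overline H^h_\phi\le\overline H^h_f$, which rests on the observation that each approximant $u_j$ already belongs to the potentially smaller class $\overline{\cal U}^h_\phi$ precisely because $\phi$ was built from the boundary liminfs of those very $u_j$.
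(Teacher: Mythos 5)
Your argument is correct, and its engine --- sandwiching $f$ between Borel functions $\psi\le f\le\phi$ built from the boundary liminfs/limsups of approximating sequences in $\overline{\cal U}{}_f^h$ and $\underline{\cal U}{}_f^h$, observing that those same approximants lie in $\overline{\cal U}{}_\phi^h$ (resp.\ $\underline{\cal U}{}_\psi^h$) so that the envelopes of $\phi,\psi$ coincide with those of $f$, and then invoking Proposition \ref{prop6.5} and the strict positivity of $K(x,\cdot)$ to force $\phi=\psi$ $\mu_h$-a.e. --- is precisely the device of the paper's proof. The difference is scope: the paper runs this sandwich only for indicator functions $f=1_A$, and then needs a substantial second stage to reach general $f$, namely a Stone--Weierstrass lattice argument on the class $\Phi$ of continuous $\phi$ with $\phi\circ f$ quasiresolutive, a monotone-class argument to get $1_{f^{-1}(J)}=1_J\circ f$ quasiresolutive for intervals $J$, and a separate treatment of the sets $\{f=\pm\infty\}$. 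You instead apply the sandwich directly to a bounded nonnegative quasiresolutive function and reduce the general case to that via the truncations $f^\pm\wedge n$, which are quasiresolutive by Proposition \ref{prop6.3} (with constants resolutive by Proposition \ref{prop6.4}); this eliminates the Stone--Weierstrass machinery entirely, at no cost for the statement at hand (that machinery's only extra yield is the byproduct that $\phi\circ f$ is quasiresolutive for bounded Borel $\phi$, which Proposition \ref{prop6.10b} does not require). Two minor remarks: your key inequality $\overline H{}_\phi^h\le\overline H{}_f^h$ does not actually need $\inf_ju_j=\dot H{}_f^h$ everywhere --- equality quasieverywhere, or at a single point combined with the invariance argument the paper uses for the indicator case, already suffices after regularization; and in the final subtraction one should note (as your appeal to the ``everywhere finite'' clause of Proposition \ref{prop6.5} implicitly does) that $\int\phi(Y) K(x,Y)\,d\mu_h(Y)$ and $\int\psi(Y) K(x,Y)\,d\mu_h(Y)$ are both finite at the point $x$ where they are compared.
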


\begin{proof}
We begin by proving this for $f=1_A$, the indicator function of a subset $A$ of
$\Delta(U)$, cf.\ \cite[p.\ 113]{Do}. Clearly, $\dot H{}_f^h$ and
${H}{\vrule width 0pt depth 0.2em }_{\kern-0.7em\hbox{.}}\,\,{}_f^h(x)$ have their
values in $[0,1]$, and hence $\overline H{}_f^h=\dot H{}_f^h$ and
$\underline H{}_f^h
={H}{\vrule width 0pt depth 0.2em }_{\kern-0.7em\hbox{.}}\,\,{}_f^h$
according to Proposition \ref{prop6.1} (c).
Since $\overline{\cal U}{}_f^h$ is lower directed there is a decreasing
sequence of functions $u_j\in\overline{\cal U}{}_f^h$ such that
$\overline H{}_f^h(x_0)=\inf_ju_j(x_0)$. Replacing $u_j$ by
$u_j\wedge1\in\overline{\cal U}{}_f^h$ we arrange that $u_j\le1$.
Denote by $g_j$ the function defined on ${\overline U}$ by
$$g_j(Y)=\underset{z\to Y,\,z\in U}{\liminf}u_j(z)$$ for any
$Y\in {\overline U}.$
Clearly, $g_j$ is  l.s.c.\ on ${\overline U}$ and $1_A\le g_j\le1$
on $\Delta(U)$. Write $f_2=\inf_jg_j$ (restricted to $\Delta(U)$).
Then $f_2$ is Borel measurable and
$1\ge f_2\ge f=1_A$, whence $\overline H{}_{f_2}^h\ge\overline H{}_f^h$. For the
opposite inequality note that $u_j\in\overline{\cal U}{}_{f_2}^h$ because
$g_j\ge f_2$. Hence
$\overline H{}_f^h=\inf_ju_j\ge\overline H{}_{f_2}^h$ with equality at $x_0$.
Furthermore, $\overline H{}_f^h$ is invariant according to
Proposition \ref{prop6.1} (c), and hence
$\overline H{}_{f_2}^h-\overline H{}_f^h$ is positive and finely
$h$-superharmonic on $U$. Being $0$ at $x_0$ it is identically $0$, and so
$\overline H{}_{f_2}^h=\overline H{}_f^h$. Similarly there is
a positive Borel measurable function $f_1\le f$ such that
$\underline H{}_{f_1}^h=\underline H{}_f^h$. Clearly, $0\le f_1\le f_2\le1$.
Since $f$ is $h$-quasi\-resolutive
we obtain from Proposition \ref{prop6.1} (a) q.e.\ on $U$
$$
H_f^h=\underline H{}_{f_1}^h\le\overline H{}_{f_1}^h\le H_f^h
\le\underline H{}_{f_2}^h\le\overline H{}_{f_2}^h=H_f^h,
$$
thus with equality q.e.\ all through. Hence $f_1$ and $f_2$ are
$h$-quasi\-resolutive, and so is therefore $f_2-f_1$ by 1.\ and 2.\ in
Proposition \ref{prop6.3}, which also shows that $H_{f_2-f_1}^h=H_{f_2}-H_{f_1}=0$
q.e. Because $f_2-f_1$ is positive and Borel measurable on $\Delta(U)$ it
follows by Proposition \ref{prop6.5} that
$\frac1h\int(f_2(Y)-f_1(Y))K(.,Y)d\mu_h(Y)=0$,
and hence $f_1=f_2$ $\mu_h$-a.e.
It follows that $f=1_A$ is $\mu_h$-measurable, and so is therefore $A$.

Next we treat the case of a finite valued $h$-quasi\-resolutive function $f$ on
$\Delta(U)$. Adapting the proof given in \cite[p.\ 115]{Do} in the classical
setting we consider the space $\cal C(\overline\RR,\RR)$ of continuous (hence
bounded) functions $\overline\RR\to\RR$, and denote by $\Phi$ the space of
functions $\phi\in\cal C(\overline\RR,\RR)$ such that $\phi\circ f$ is
$h$-quasi\-resolutive.
In view of Proposition \ref{prop6.3}, $\Phi$ is a vector lattice,
closed under uniform convergence because $|\phi_j-\phi|<\eps$ implies
$|H_{\phi_j\circ f}^h-\overline H{}_{\phi\circ f}^h|\le\eps$ and
$|H_{\phi_j\circ f}^h-\underline H_{\phi\circ f}^h|\le\eps$ on $U\setminus E^h_f$,
and so $|\overline H{}_{\phi\circ f}^h-\underline H{}_{\phi\circ f}^h|\le2\eps$
on $U\setminus E^h_f$. We infer that
$\overline H{}_{\phi\circ f}^h=\underline H{}_{\phi\circ f}^h$ (finite values) q.e.\
on $U$, and so $\phi\circ f$ is indeed resolutive.
Furthermore, $\Phi$ includes the fuctions $\phi_n:t\longmapsto(1-|t-n|)\vee0$ on
$\RR$ for integers $n\ge1$, again by Proposition \ref{prop6.3}. These functions
separate points of $\RR$. In fact, for distinct $s,t\in\RR$, say $s<t$, take
$n=[s]$ (that is, $n\le s<n+1$). If also $n\le t<n+1$ then clearly
$\phi_n(t)<\phi_n(s)\le1$, and in the remaining case $t\ge n+1$ we have
$\phi_n(t)=0<\phi_n(s)$. It therefore follows by the lattice version of the
Stone-Weierstrass theorem that $\Phi=\cal C(\overline\RR,\RR)$.
Next, the class $\Psi$ of (not necessarily continuous) functions
$\psi:\RR\longrightarrow\RR$ for which $\psi\circ f$ is $h$-quasi\-resolutive
is closed under bounded monotone convergence, by Corollary \ref{cor4.8}
(adapted to a bounded monotone convergent sequence of functions $f_j$).
Along with the continuous functions $\RR\longmapsto\RR)$, $\Psi$ therefore
includes every bounded Borel measurable function $\RR\longmapsto\RR$.
In particular, the indicator function $1_J$ of an interval $J\subset\RR$
belongs to $\Psi$, and hence $1_J\circ f$ is $h$-quasi\-resolutive.
We conclude by the first part of the proof that the $h$-quasi\-resolutive
indicator function $1_J\circ f=1_{f^{-1}(J)}$ is $\mu_h$-measurable.

Finally, for an arbitrary $h$-quasi\-resolutive function
$f:\Delta(U)\longrightarrow\overline\RR$, write $A_+:=\{f=+\infty\}$ and
$A_-:=\{f=-\infty\} $. By 3.\ in Proposition \ref{prop6.3}, $f\vee0$ is
$h$-quasi\-resolutive and
$$
\overline H{}_{(+\infty)1_{A_+}}^h=\overline H{}_{f1_{A_+}}^h
\le\overline H{}_{f\vee0}^h=\dot H{}_{f\vee0}^h<+\infty
$$
on $U\setminus(E^h_f\cup P_f^h)$, cf.\ the opening of the present section.
It follows that $\overline H{}_{1_{A_+}}=0$ q.e.\ on $U$
and hence $\overline H{}_{(+\infty)1_{A_+}}^h=0$ q.e.\ according to 3.\ in
Proposition \ref{prop6.2}. Since $-f$ likewise is $h$-quasi\-resolutive
we have $\underline H{}_{(+\infty)1_{A_-}}^h=0$ q.e. Furthermore,
$0\le\underline H{}_{(+\infty)1_{A_+}}\le\overline H{}_{(+\infty)1_{A_+}}=0$ q.e.\ by
Proposition \ref{prop6.1} (a), and similarly $\overline H{}_{(+\infty)1_{A_-}}=0$
q.e. Writing $A=A_+\cup A_-=\{|f|=+\infty\}$ we infer from 2.\ in
Proposition \ref{prop6.3} that $1_A=1_{A_+}+1_{A_-}$ is $h$-quasi\-resolutive,
and hence so is
$(+\infty)1_A$ in view of Corollary \ref{cor4.8}. As shown in the first
paragraph of the proof it follows that $A$ is $\mu_h$-measurable.
Define $g:\Delta(U)\longrightarrow\RR$ by $g=f$ except that $g=0$ on
$A=\{|f|=+\infty\}$. Then $g\le f+(+\infty)1_A$ and $f\le g+(+\infty)1_A$, and
hence by Proposition \ref{prop6.2} (recall that $\overline H{}_{1_{A_+}}=0$
q.e.\ on $U$)
$$
\overline H{}_g^h\le H_f^h+\overline H{}_{(+\infty)1_A}^h=H_f^h
\le\underline H{}_g^h+\underline H{}_{(+\infty)1_A}^h=\underline H{}_g^h
$$
quasieverywhere, actually with equalities q.e., and hence $g$ is
$h$-quasi\-resolutive along with $f$.
As established in the beginning of the proof, $g$ is $\mu_h$-measurable and so
is $f$. In fact, $\{f\ne g\}=A$ is $\mu_h$-measurable with $\mu_h(A)=0$ by the
following corollary.
\end{proof}

\begin{cor}\label{cor6.10c} {\rm{(Cf.\ \cite[p.\ 114]{Do}.)}}
A set $A\subset\Delta(U)$ is $h$-harmonic null
if and only if $A$ is $\mu_h$-measurable with $\mu_h(A)=0$.
\end{cor}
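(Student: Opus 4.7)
The plan is to split the equivalence into two implications, each of which is essentially a direct consequence of results already in hand, chiefly Proposition~\ref{prop6.4} and Proposition~\ref{prop6.10b}.

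For the easy direction, suppose $A$ is $\mu_h$-measurable with $\mu_h(A)=0$. Proposition~\ref{prop6.4} gives
$$
H_{1_A}^h=\frac{1}{h}\int_A K(\cdot,Y)\,d\mu_h(Y)=0
$$
on $U$, so in particular $\overline H{}_{1_A}^h=0$ and $A$ is $h$-harmonic measure null.

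For the converse, assume $\overline H{}_{1_A}^h=0$. The first step will be to promote this to full $h$-resolutivity of $1_A$. The constant function $0$ belongs to $\underline{\cal U}{}_{1_A}^h$, since $\limsup 0=0\le 1_A(Y)$ for every $Y\in\Delta(U)$; hence ${H}{\vrule width 0pt depth 0.2em }_{\kern-0.7em\hbox{.}}\,\,{}_{1_A}^h\ge 0$ and therefore $\underline H{}_{1_A}^h\ge 0$. Combining this with the hypothesis and with Proposition~\ref{prop6.1} (b) (applicable everywhere on $U$, since $\overline H{}_{1_A}^h=0>-\infty$ throughout) forces $\underline H{}_{1_A}^h=\overline H{}_{1_A}^h=0$. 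So $1_A$ is $h$-resolutive, hence $h$-quasi\-resolutive by Lemma~\ref{lemma3.3}, and Proposition~\ref{prop6.10b} then produces the $\mu_h$-measurability of $A$.

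It remains to show $\mu_h(A)=0$. With $A$ now known to be $\mu_h$-measurable, I re-apply Proposition~\ref{prop6.4}: for any fixed $x_0\in U$,
$$
0=h(x_0)H_{1_A}^h(x_0)=\int_A K(x_0,Y)\,d\mu_h(Y).
$$
Since $K(x_0,\cdot)>0$ on $\Delta(U)$ (as noted just before Proposition~\ref{prop6.5}) and $\mu_h$ is carried by $\Delta_1(U)\subset\Delta(U)$, the vanishing of this integral forces $\mu_h(A)=0$. There is no real obstacle; the only mildly subtle point is the opening observation that $\overline H{}_{1_A}^h=0$ already suffices for $h$-resolutivity, thanks to the trivial lower bound $0\in\underline{\cal U}{}_{1_A}^h$. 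Everything else is a short traversal of the chain Proposition~\ref{prop6.4} $\to$ Lemma~\ref{lemma3.3} $\to$ Proposition~\ref{prop6.10b} $\to$ Proposition~\ref{prop6.4}.
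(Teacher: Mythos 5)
Your proof is correct and follows essentially the same route as the paper's: the easy direction via the integral formula for $H_{1_A}^h$, and the converse by first upgrading $\overline H{}_{1_A}^h=0$ to $h$-resolutivity of $1_A$ (using $0\le\underline H{}_{1_A}^h\le\overline H{}_{1_A}^h$), then invoking Proposition \ref{prop6.10b} for measurability and the integral formula plus $K(x,\cdot)>0$ to get $\mu_h(A)=0$. The only cosmetic difference is that you cite Proposition \ref{prop6.4} where the paper cites Proposition \ref{prop6.5}; both apply.
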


\begin{proof} If $A$ is $\mu_h$-measurable with $\mu_h(A)=0$ then
$\overline H{}_{1_A}^h=\frac1h\int1_AK(.,Y)d\mu_h(Y)=0$ according to Proposition
\ref{prop6.5}. Conversely, if $\overline H{}_{1_A}^h=0$ then
$0\le\underline H{}_{1_A}^h\le\overline H{}_{1_A}^h=0$ and so $1_A$ is
$h$-resolutive. It follows by Proposition \ref{prop6.10b} that $1_A$
is $\mu_h$-measurable, and again by Proposition \ref{prop6.5} that
$\frac1h\int1_AK(.,Y)d\mu_h(Y)=\overline H{}_{1_A}^h=0$. Since $K(x,.)>0$ is
l.s.c.\ by \cite[Proposition 3.2]{EF1} we conclude that $\mu_h(A)=0$.
\end{proof}

\begin{cor}\label{cor6.3b} A function $f$ on $\Delta(U)$ with values in
$\overline\RR$ is $h$-resolutive if and only if $f$ is $h$-quasi\-resolutive.
\end{cor}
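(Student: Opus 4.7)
The plan is to combine the three essential previous results: Lemma \ref{lemma3.3} gives one implication for free, so the real content is the converse. Assume $f$ is $h$-quasi\-resolutive; the goal is to produce $h$-resolutivity.

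First, I would apply Proposition \ref{prop6.10b} to conclude that $f$ is $\mu_h$-measurable. Consequently $f^+=f\vee 0$ and $f^-=(-f)\vee 0$ are $\mu_h$-measurable as well, and they are lower bounded (in fact nonnegative). Proposition \ref{prop6.3} (parts 1 and 3, starting from the $h$-quasi\-resolutivity of $f$ and $-f$) shows that $f^+$ and $f^-$ are themselves $h$-quasi\-resolutive.

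Now the key reduction: Proposition \ref{prop6.5} asserts, for a lower bounded $\mu_h$-measurable function on $\Delta(U)$, the equivalence of $h$-quasi\-resolutivity and $h$-resolutivity. Applying this to $f^+$ and $f^-$, both functions are $h$-resolutive. At this stage Lemma \ref{lemma6.3c} applies directly: it takes an $h$-quasi\-resolutive function whose positive and negative parts are $h$-resolutive and concludes that the function itself is $h$-resolutive (with $H_f^h=\overline H{}_f^h=\underline H{}_f^h$ finite valued on $U$). This yields the desired implication.

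There is essentially no obstacle here beyond assembling the ingredients in the right order — the substantive work was done in Proposition \ref{prop6.10b} (quasi-resolutivity implies $\mu_h$-measurability) and in Proposition \ref{prop6.5} (equivalence of the two notions in the lower bounded $\mu_h$-measurable case). The only mild care needed is to check that the operations $f\mapsto f^\pm$ preserve $h$-quasi\-resolutivity, which is exactly what Proposition \ref{prop6.3} (1) and (3) provide, and that Lemma \ref{lemma6.3c} is applicable even when $\overline H{}_f^h$ or $\underline H{}_f^h$ might a priori take infinite values — but the lemma's hypothesis already anticipates this, concluding finiteness as part of its statement.
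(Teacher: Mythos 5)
Your proposal is correct and follows essentially the same route as the paper: Lemma \ref{lemma3.3} for one direction, then Proposition \ref{prop6.10b} for $\mu_h$-measurability, Proposition \ref{prop6.3} to pass to $f^{\pm}$, and Proposition \ref{prop6.5} to upgrade these to $h$-resolutivity. The only cosmetic difference is that you finish by invoking Lemma \ref{lemma6.3c} (which the paper states precisely as a step toward this corollary), whereas the paper's written proof instead recombines $f=f^+-f^-$ via the $h$-resolutive versions of 1.\ and 2.\ in Proposition \ref{prop6.3}; both closings are valid.
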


\begin{proof} The `only if' part is contained in Lemma \ref{lemma3.3}.
For the `if' part, suppose that $f$ is $h$-quasi\-resolutive and hence
$\mu_h$-measurable, by Proposition \ref{prop6.10b}. If $f\ge0$ then
$f$ is $h$-resolutive according to Proposition \ref{prop6.5}.
For arbitrary $f:\Delta(U)\longrightarrow\overline\RR$ this applies to
$f^+$ and $f^-$, which are $h$-quasi\-resolutive
according to 3.\ in Proposition \ref{prop6.3}. Consequently, $f=f^+-f^-$
is likewise finely $h$-resolutive by 1.\ and 2.\ in Proposition \ref{prop6.3}.
\end{proof}

\begin{theorem}\label{thm6.8}
A function $f:\Delta(U)\longrightarrow\overline\RR$ is
$h$-resolutive if and only if the
function $Y\longmapsto f(Y)K(x,Y)$ on $\Delta(U)$ is $\mu_h$-integrable
for quasievery $x\in U$. In the affirmative  case $Y\longmapsto f(Y)K(x,Y)$
is $\mu_h$-integrable for every $x\in U$, and we have everywhere on $U$
$$
H{}_f^h=\frac{1}{h}\int f(Y)K(.,Y)d\mu_h(Y).
$$
\end{theorem}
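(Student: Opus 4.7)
The plan is to reduce the theorem to two results already established: Proposition \ref{prop6.5}, which handles lower bounded $\mu_h$-measurable functions, and Corollary \ref{cor4.10a}, which links $h$-resolutivity of $f$ and of $|f|$. The decomposition $f=f^+-f^-$ will then yield both the equivalence and the integral formula.

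First I would prove the ``only if'' direction. Assume $f$ is $h$-resolutive. By Lemma \ref{lemma3.3}, $f$ is $h$-quasi\-resolutive, so Proposition \ref{prop6.10b} makes $f$ (hence $|f|$) $\mu_h$-measurable. Corollary \ref{cor4.10a} then shows $|f|$ is $h$-resolutive, and since $|f|\ge 0$ is $\mu_h$-measurable, Proposition \ref{prop6.5} yields
\[
\frac{1}{h}\int |f(Y)|K(x,Y)\,d\mu_h(Y)=H_{|f|}^h(x)<+\infty
\]
for every $x\in U$ (the ``q.e.\ $\Leftrightarrow$ everywhere'' clause in Proposition \ref{prop6.5}). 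Thus $Y\mapsto f(Y)K(x,Y)$ is $\mu_h$-integrable for every $x\in U$. To get the integral formula, I would split $f=f^+-f^-$: both $f^\pm\le|f|$ are positive, $\mu_h$-measurable, with integrals finite everywhere, so Proposition \ref{prop6.5} produces $H_{f^\pm}^h=\frac{1}{h}\int f^\pm(Y)K(\cdot,Y)\,d\mu_h(Y)$ on $U$. As in the proof of Lemma \ref{lemma6.3c}, part 2 of Proposition \ref{prop6.2} applied to $f=f^+-f^-$ and to $-f=f^--f^+$ forces $H_f^h=H_{f^+}^h-H_{f^-}^h$ on $U$, giving the claimed formula.

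For the ``if'' direction, assume $Y\mapsto f(Y)K(x,Y)$ is $\mu_h$-integrable for q.e.\ $x\in U$. Fix such an $x$; then $fK(x,\cdot)$ is $\mu_h$-measurable, and since $K(x,\cdot)>0$ is $\mu_h$-measurable, $f$ itself is $\mu_h$-measurable. Then $|f|\ge0$ is a lower bounded $\mu_h$-measurable function with $\frac{1}{h}\int|f|K(x,\cdot)\,d\mu_h<+\infty$ q.e., so Proposition \ref{prop6.5} makes $|f|$ $h$-resolutive. Corollary \ref{cor4.10a} then gives that $f$ is $h$-resolutive, and the integral formula together with everywhere integrability follow from the ``only if'' direction just proved.

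The argument is largely bookkeeping built on the infrastructure of Propositions \ref{prop6.5} and \ref{prop6.3}; the main point to watch, in the ``if'' direction, is the need to first deduce $\mu_h$-measurability of $f$ from integrability of $fK(x,\cdot)$, exploiting the strict positivity of $K(x,\cdot)$, before Corollary \ref{cor4.10a} can be invoked.
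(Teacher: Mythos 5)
Your proposal is correct and follows essentially the same route as the paper: both directions rest on the decomposition $f=f^+-f^-$, the $\mu_h$-measurability supplied by Proposition \ref{prop6.10b} (resp.\ the strict positivity of $K(x,\cdot)$), and Proposition \ref{prop6.5} applied to the positive parts; your use of Corollary \ref{cor4.10a} merely packages the step that the paper's proof carries out inline via 3.\ of Proposition \ref{prop6.3} and 1.--2.\ of Proposition \ref{prop6.3}. Your explicit derivation of the integral formula via Lemma \ref{lemma6.3c} is a welcome bit of detail that the paper leaves implicit.
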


\begin{proof} Suppose first that $f$ is $h$-resolutive.
By Proposition \ref{prop6.10b} $f$ is then $\mu_h$-measurable.
According to 3.\ in Proposition \ref{prop6.3} the function $|f|$ is also
$h$-resolutive, and it follows by Proposition \ref{prop6.5} that
$\overline H{}_{|f|}^h(x)=\frac1h\int |f(Y)|K(x,Y)d\mu_h(Y)<+\infty$
for every $x\in U$. Conversely, suppose that $fK(x,.)$ is $\mu_h$-integrable
for quasi\-every
$x\in U\setminus E$. For any $x\in U$, $K(x,.)>0$ is l.s.c.\ and hence
$\mu_h$-measurable on $\overline U$ by \cite[Proposition 3.2]{EF1}, and so
$f^+$ and $f^-$ must be $\mu_h$-measurable. By Proposition \ref{prop6.5},
$f^+$ and $f^-$ are therefore $h$-quasi\-resolutive, that is $h$-resolutive
by Corollary \ref{cor6.3b}, and so is therefore
$f=f^+-f^-$ by 1.\ and 2.\ in Proposition \ref{prop6.3}.
\end{proof}

\begin{remark}\label{remark6.12}
In the case where $U$ is Euclidean open it
follows by the Harnack convergence theorem for harmonic functions (not
extendable to finely harmonic functions) for any numerical function
$f$ on $\Delta(U)$ that $\overline H{}_f^h$ is
$h$-hyper\-harmonic on $U$ (in particular $>-\infty$) and hence equal to
$\dot H{}_f^h$ (except if $\overline H{}_f^h\equiv-\infty$). It follows
by Proposition \ref{prop6.1} (a) that
$\overline H{}_f^h=\dot H{}_f^h
\ge{H}{\vrule width 0pt depth 0.2em }_{\kern-0.7em\hbox{.}}\,\,{}_f^h
=\underline H{}_f^h$. If $f$ is resolutive then
by Definition \ref{def6.9} equality prevails on all of $U$.
Summing up, Theorem \ref{thm6.8} is a (proper) extension of
the corresponding classical result, cf.\ e.g.\ \cite[Theorem 1.VIII.8]{Do}.
\end{remark}

We close this section with a brief discussion of an alternative, but
compatible concept of $h$-resolutivity based on the minimal-fine
(mf) topology, cf.\ \cite[Definition 3.4]{EF2}. The mf-closure of $U$
is $U\cup\Delta_1(U)$, and the relevant boundary functions $f$ are therefore
now defined only on the mf-boundary $\Delta_1(U)$.

Given a function $f$ on $\Delta_1(U)$ with values in ${\overline \RR}$,
a finely $h$-hyper\-harmonic function $u$ on $U$ is now said
to belong to the upper PWB$^h$ class, denoted again by
${\overline {\cal U}}{}_f^h$, if $u$ is lower bounded and if
$$\underset{x\to Y,\,x\in U}{\mfliminf}\,u(x)\ge f(Y)
\quad\text{ for every }\;Y\in \Delta_1(U).$$
This leads to new, but similarly denoted concepts
 $$\dot H{}_f^h=\inf{\overline{\cal U}}{}_f^h,
\quad{\overline H}{}_f^h=\widehat{\dot H{}_f^h}
={\widehat\inf}\,{\overline {\cal U}}{}_f^h\;(\le\dot H{}_f^h),$$
and hence new concepts of $h$-quasi\-resolutivity and $h$-resolutivity.

When considering reduction $R_u^A$ and sweeping $\widehat R{}_u^A$ of a
finely $h$-hyperharmonic function $u$ on $U$ onto a set
$A\subset\overline U$ we similarly use the alternative, though actually
equivalent mf-versions
\cite[Definition 3.14]{EF2}, cf.\ \cite[Theorem 3.16]{EF2}.
This occurs in the proof of Proposition \ref{prop6.4} (after the first
display), where now $W\subset\overline U$ is mf-open (and contains $A$).

The changes as compared with the case of $h$-resolutivity relative to the
natural topology are chiefly as follows. A set $A\subset U\cup\Delta_1(U)$
is of course now said to be $h$-harmonic null if $\overline H{}_{1_A}^h=0$
(with the present mf-version of $\overline H$). In Proposition
\ref{prop6.1} (b) we apply \cite[Proposition 3.12]{EF2} in place of its
corollary. In the beginning of the proof of Proposition \ref{prop6.10b}
the function $g_j$ shall now be defined at $Y\in\Delta_1(U)$ by
$$
g_j(Y)=\underset{z\to Y,\,z\in U}{\mfliminf}\,u_j(z).
$$
And $g_j$ is $\mu_h$-measurable on $\Delta_1(U)$ because $g_j$ equals
$\mu_h$-a.e.\ the $\mu_h$-measurable function defined $\mu_h$-a.e.\ on
$\Delta_1(U)$ by
$Y\longmapsto\mflim_{z\to Y,\,z\in U}\,u_j(z)=\frac{d\mu_{u_j}}{d\mu_h}(Y)$
according to the version of the
Fatou-Na{\"i}m-Doob theorem established in \cite[Theorem 4.5]{EF1}. Here
$\mu_{u_j}$ denotes the representing measure for $u_j$, that is $K\mu_{u_j}=u_j$,
and $d\mu_{u_j}/d\mu_h$ denotes the Radon-Nikod\'ym derivative of the
$\mu_h$-continuous part of $\mu_{u_j}$ (carried by $\Delta_1(U)$) with respect
to $\mu_h$. The $\mu_h$-measurability of $g_j$ on $\Delta_1(U)$ thus established
is all that is needed for the proof of the mf-version of Proposition
\ref{prop6.10b}, replacing mostly $\Delta(U)$ with $\Delta_1(U)$.

The following result is established like Theorem \ref{thm6.8}

\begin{theorem}\label{thm6.8a} A function
$f:\Delta_1(U)\longrightarrow\overline\RR$ is $h$-resolutive relative to the
mf-topology if and only if the
function $Y\longmapsto f(Y)K(x,Y)$ on $\Delta_1(U)$ is $\mu_h$-integrable
for quasievery $x\in U$. In the affirmative  case $Y\longmapsto f(Y)K(x,Y)$
is $\mu_h$-integrable for every $x\in U$, and we have everywhere on $U$
$$
H{}_f^h=\frac{1}{h}\int f(Y)K(.,Y)d\mu_h(Y).
$$
\end{theorem}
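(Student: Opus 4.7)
The plan is to replay the proof of Theorem \ref{thm6.8} line by line in the mf-setting, leaning on the mf-adaptations of the earlier results of Section 3 that were sketched in the discussion immediately preceding the statement.

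For the `only if' direction I would start by assuming $f$ is $h$-resolutive relative to the mf-topology. The mf-version of Proposition \ref{prop6.10b} then gives that $f$ is $\mu_h$-measurable on $\Delta_1(U)$; the mf-version of part 3 of Proposition \ref{prop6.3} shows that $|f|=f\vee(-f)$ is $h$-resolutive. Because $|f|$ is lower bounded and $\mu_h$-measurable, the mf-version of Proposition \ref{prop6.5} yields
$$
H_{|f|}^h(x)=\frac{1}{h}\int|f(Y)|K(x,Y)\,d\mu_h(Y)<+\infty
$$
for every $x\in U$, so $Y\mapsto f(Y)K(x,Y)$ is $\mu_h$-integrable at every $x\in U$. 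Applying the same formula to $f^+$ and $f^-$ (both positive and $h$-resolutive by parts 1 and 3 of Proposition \ref{prop6.3}) and subtracting via parts 1 and 2 of the mf-version of Proposition \ref{prop6.3}, one reaches the asserted integral representation of $H_f^h$ throughout $U$. For the converse, assume $fK(x,\cdot)$ is $\mu_h$-integrable for quasievery $x\in U$. Since $K(x,\cdot)>0$ is l.s.c., hence $\mu_h$-measurable on $\Delta_1(U)$ by \cite[Proposition 3.2]{EF1}, both $f^+$ and $f^-$ must be $\mu_h$-measurable; the mf-version of Proposition \ref{prop6.5} then declares them $h$-quasi\-resolutive, and the mf-version of Corollary \ref{cor6.3b} upgrades this to $h$-resolutive. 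Parts 1 and 2 of the mf-version of Proposition \ref{prop6.3} finish the argument, since $f=f^+-f^-$.

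The genuine work is not in the theorem but in verifying the mf-versions of the preparatory results of Section 3. The author's discussion pinpoints the required substitutions: \emph{minimal-fine} lim inf supplants the fine lim inf in the definition of $\overline{\cal U}{}_f^h$; the mf-boundary minimum principle \cite[Proposition 3.12]{EF2} stands in for its fine-topology corollary in the proof of Proposition \ref{prop6.1}(b); reductions and sweepings are interpreted in their mf-forms via \cite[Theorem 3.16]{EF2} inside the proof of Proposition \ref{prop6.4}; and the most delicate point, namely the $\mu_h$-measurability on $\Delta_1(U)$ of the l.s.c.\ minorants $g_j(Y)=\mfliminf_{z\to Y,\,z\in U}u_j(z)$ used in the proof of Proposition \ref{prop6.10b}, is secured through the Fatou-Na\"{\i}m-Doob theorem \cite[Theorem 4.5]{EF1}, which identifies $g_j$ $\mu_h$-a.e.\ with the Radon-Nikod\'ym derivative $d\mu_{u_j}/d\mu_h$ of the $\mu_h$-continuous part of the representing measure. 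Once these substitutions are in place, the chain Propositions \ref{prop6.4}$\to$\ref{prop6.5}$\to$\ref{prop6.10b}$\to$Corollary \ref{cor6.3b} transfers mechanically, and the argument for Theorem \ref{thm6.8} applies verbatim to yield Theorem \ref{thm6.8a}.
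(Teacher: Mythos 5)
Your proposal is correct and follows exactly the route the paper takes: the paper's own proof of Theorem \ref{thm6.8a} consists precisely of the remark that it ``is established like Theorem \ref{thm6.8}'' after the mf-adaptations of Propositions \ref{prop6.1}, \ref{prop6.4}, \ref{prop6.5}, \ref{prop6.10b} and Corollary \ref{cor6.3b} described in the preceding paragraphs, and you identify the same substitutions (mf-liminf, \cite[Proposition 3.12]{EF2}, \cite[Theorem 3.16]{EF2}, and the Fatou--Na\"{\i}m--Doob argument for the measurability of $g_j$). Nothing essential is missing.
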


\begin{cor}\label{cor6.8b} For any $h$-resolutive function
$f:\Delta(U)\longrightarrow\overline\RR$ relative to the natural topology,
the restriction of $f$ to $\Delta_1(U)$ is resolutive relative to the
mf-topology. Conversely, for any $h$-resolutive function
$f:\Delta_1(U)\longrightarrow\overline\RR$ relative to the mf-topology,
the extension of $f$ by $0$ on $\Delta(U)\setminus\Delta_1(U)$ is
$h$-resolutive relative to the natural topology.
\end{cor}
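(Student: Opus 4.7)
The plan is to exploit the fact that Theorem \ref{thm6.8} and Theorem \ref{thm6.8a} give identical integral criteria for $h$-resolutivity in the two topologies, differing only in whether the integration runs over $\Delta(U)$ or over $\Delta_1(U)$, combined with the crucial fact that the representing measure $\mu_h$ is carried by $\Delta_1(U)$.

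For the first direction, I would suppose $f:\Delta(U)\longrightarrow\overline\RR$ is $h$-resolutive relative to the natural topology. Proposition \ref{prop6.10b} then gives that $f$ is $\mu_h$-measurable, and Theorem \ref{thm6.8} says $Y\longmapsto f(Y)K(x,Y)$ is $\mu_h$-integrable on $\Delta(U)$ for every $x\in U$. Since $\mu_h$ vanishes on $\Delta(U)\setminus\Delta_1(U)$, the restriction $f|_{\Delta_1(U)}$ is $\mu_h$-measurable on $\Delta_1(U)$ and $Y\longmapsto f(Y)K(x,Y)$ is $\mu_h$-integrable over $\Delta_1(U)$ with the same integral value. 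Applying Theorem \ref{thm6.8a} to $f|_{\Delta_1(U)}$ then yields mf-$h$-resolutivity, and the two integral formulas give moreover $H{}_{f|_{\Delta_1(U)}}^h=H{}_f^h$ everywhere on $U$.

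For the converse, I would take an mf-$h$-resolutive function $f:\Delta_1(U)\longrightarrow\overline\RR$ and let $\tilde f$ be its extension by $0$ on $\Delta(U)\setminus\Delta_1(U)$. Theorem \ref{thm6.8a} provides $\mu_h$-measurability of $f$ on $\Delta_1(U)$ and $\mu_h$-integrability of $Y\longmapsto f(Y)K(x,Y)$ on $\Delta_1(U)$ for every $x\in U$. The extension $\tilde f$ is $\mu_h$-measurable on $\Delta(U)$ because the preimages under $\tilde f$ and under $f$ of any Borel set differ only inside the $\mu_h$-null set $\Delta(U)\setminus\Delta_1(U)$ (whose subsets are all $\mu_h$-measurable by Corollary \ref{cor6.10c}), and $\mu_h$-integrability of $Y\longmapsto \tilde f(Y)K(x,Y)$ on $\Delta(U)$ reduces to the same integral over $\Delta_1(U)$. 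Theorem \ref{thm6.8} then yields $h$-resolutivity of $\tilde f$ relative to the natural topology, again with $H{}_{\tilde f}^h=H{}_f^h$.

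The main obstacle is essentially bookkeeping: one must verify that the hypothesis and conclusion of Theorem \ref{thm6.8a} really are formulated in terms of $\mu_h$-integrability on $\Delta_1(U)$ (so that passing between the domains $\Delta(U)$ and $\Delta_1(U)$ is legitimate) and that $\mu_h$-measurability of the extension/restriction is unaffected by what happens on the null set $\Delta(U)\setminus\Delta_1(U)$. Once those measure-theoretic points are in place, the corollary falls out immediately from the coincidence of the two integral representations $H{}_f^h=\frac1h\int f(Y)K(.,Y)d\mu_h(Y)$.
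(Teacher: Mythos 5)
Your proposal is correct and is essentially the paper's own (implicit) argument: the corollary is stated without proof precisely because it follows at once from the matching integral criteria of Theorem \ref{thm6.8} and Theorem \ref{thm6.8a} together with the fact that $\mu_h$ is carried by $\Delta_1(U)$, which is exactly the route you take.
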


\section{Further equivalent concepts of $h$-resolutivity}\label{sec4}

We again consider functions $f:\Delta(U)\longrightarrow\overline{\RR}$.
We show that the equivalent concepts of $h$-resolutivity and
$h$-quasiresolutivity do not alter when $\overline H{}_f^h,\underline H{}_f^h$
in Definitions \ref{def6.9}, \ref{def2.3} are
replaced by $\dot H{}_f^h$ and
${H}{\vrule width 0pt depth 0.2em }_{\kern-0.7em\hbox{.}}\,\,{}_f^h$,
respectively. Recall from Proposition \ref{prop6.1} (c) that
$\dot H{}_f^h=\overline H{}_f^h$ if $\dot H{}_f^h<+\infty$. This applies,
in particular, to the indicator function $1_A$ of a set $A\subset\Delta(U)$.
Therefore the ``dot''-version of the concept of an $h$-harmonic null set
$A$ is identical with the version considered in Definition \ref{def4.1}.

\begin{definition}\label{def4.0} A function $f$ on $\Delta(U)$ with values in
${\overline \RR}$ is said to be $h$-dot-resolutive if $\dot H{}_f^h
={H}{\vrule width 0pt depth 0.2em }_{\kern-0.7em\hbox{.}}\,\,{}_f^h$ on $U$
and if this function, also denoted by $H_f^h$,
is neither identically $+\infty$ nor identically $-\infty$.
\end{definition}

For any function $f:\Delta(U)\longrightarrow\RR$ we consider the following
subset of $U$:
$$
\dot E{}_f^h=\{\dot H{}_f^h=-\infty\}\cup
\{{H}{\vrule width 0pt depth 0.2em }_{\kern-0.7em\hbox{.}}\,\,{}_f^h=+\infty\}
\cup\{\dot H{}_f^h\ne
{H}{\vrule width 0pt depth 0.2em }_{\kern-0.7em\hbox{.}}\,\,{}_f^h\}.
$$

\begin{lemma}\label{lemma4.3} A function $f$ on $\Delta(U)$ with values in
${\overline \RR}$ is $h$-quasi\-resolutive if and only if $f$ is
$h$-dot-quasi\-resolutive in the sense that $\dot E{}_f^h$ is polar, or
equi\-valently that the relations
$\dot H{}_f^h>-\infty$,
${H}{\vrule width 0pt depth 0.2em }_{\kern-0.7em\hbox{.}}\,\,{}_f^h<+\infty$, and
$\dot H{}_f^h
={H}{\vrule width 0pt depth 0.2em }_{\kern-0.7em\hbox{.}}\,\,{}_f^h$
all hold quasi\-everywhere on $U$.
\end{lemma}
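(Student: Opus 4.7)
The plan is to exploit the pointwise chain of inequalities
$$
{H}{\vrule width 0pt depth 0.2em }_{\kern-0.7em\hbox{.}}\,\,{}_f^h \le \overline H{}_f^h \le \dot H{}_f^h, \qquad {H}{\vrule width 0pt depth 0.2em }_{\kern-0.7em\hbox{.}}\,\,{}_f^h \le \underline H{}_f^h \le \dot H{}_f^h,
$$
holding everywhere on $U$. The left inequalities come from the proof of Proposition \ref{prop6.1}(a), where $\overline H{}_f^h \ge {H}{\vrule width 0pt depth 0.2em }_{\kern-0.7em\hbox{.}}\,\,{}_f^h$ was obtained by taking fine l.s.c.\ hulls, together with the corresponding $\underline H{}_f^h \ge {H}{\vrule width 0pt depth 0.2em }_{\kern-0.7em\hbox{.}}\,\,{}_f^h$ (since $\underline H{}_f^h$ is the fine u.s.c.\ hull of ${H}{\vrule width 0pt depth 0.2em }_{\kern-0.7em\hbox{.}}\,\,{}_f^h$); the right inequalities are $\overline H{}_f^h \le \dot H{}_f^h$ by definition of $\overline H$ and $\dot H{}_f^h \ge \underline H{}_f^h$ from Proposition \ref{prop6.1}(a). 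In parallel, the polarity of $P_f^h$ (i.e.\ the fundamental convergence theorem content) supplies $\overline H{}_f^h = \dot H{}_f^h$ quasi\-everywhere on $\{\overline H{}_f^h > -\infty\}$ and, symmetrically, ${H}{\vrule width 0pt depth 0.2em }_{\kern-0.7em\hbox{.}}\,\,{}_f^h = \underline H{}_f^h$ quasi\-everywhere on $\{\underline H{}_f^h < +\infty\}$.

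For the forward implication, assuming $E_f^h$ is polar, the quantities $\overline H{}_f^h$ and $\underline H{}_f^h$ coincide and are finite off the polar set $E_f^h$; the two q.e.\ equalities above then apply there and show that all four quantities agree and are finite off the polar set $E_f^h \cup P_f^h$. Consequently $\dot E_f^h \subset E_f^h \cup P_f^h$ is polar.

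For the reverse implication, assuming $\dot E_f^h$ is polar, we have $-\infty < \dot H{}_f^h = {H}{\vrule width 0pt depth 0.2em }_{\kern-0.7em\hbox{.}}\,\,{}_f^h < +\infty$ off $\dot E_f^h$. The pointwise sandwich from the first paragraph immediately squeezes both $\overline H{}_f^h$ and $\underline H{}_f^h$ to that common finite value off $\dot E_f^h$, whence $E_f^h \subset \dot E_f^h$ is polar.

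The one point of technical care, and the likely obstacle if handled naively, is in the reverse direction: one cannot simply invoke a q.e.\ equality $\overline H{}_f^h = \dot H{}_f^h$ on all of $U$, because the version available from polarity of $P_f^h$ is stated only on $\{\overline H{}_f^h > -\infty\}$, and that finely open set can be a non-polar proper subset of $U$ (cf.\ Example \ref{example2.0}). The pointwise inequality $\overline H{}_f^h \ge {H}{\vrule width 0pt depth 0.2em }_{\kern-0.7em\hbox{.}}\,\,{}_f^h$ on all of $U$ neatly bypasses this issue: once ${H}{\vrule width 0pt depth 0.2em }_{\kern-0.7em\hbox{.}}\,\,{}_f^h$ is known to be finite q.e., $\overline H{}_f^h$ inherits q.e.\ finiteness for free, after which the squeeze finishes the job. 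The equivalence with the three q.e.\ conditions in the ``or equivalently'' clause of the lemma is formal and parallels that in Definition \ref{def2.3}.
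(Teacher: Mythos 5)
Your proof is correct and follows essentially the same route as the paper: both directions rest on the everywhere-valid sandwich from Proposition \ref{prop6.1}(a) together with the polarity of $P_f^h$ (the fundamental convergence theorem), with the ``quasiresolutive $\Rightarrow$ dot-quasiresolutive'' direction being what the paper dismisses as obvious. Your observation that the pointwise bound $\overline H{}_f^h\ge{H}{\vrule width 0pt depth 0.2em }_{\kern-0.7em\hbox{.}}\,\,{}_f^h$ yields q.e.\ finiteness of $\overline H{}_f^h$ directly is a slight streamlining of the paper's re-invocation of the q.e.\ identity on $\{\overline H{}_f^h>-\infty\}$, but the substance is the same.
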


\begin{proof} If these three relations hold q.e.\ on $U$ then analogously
$\overline H{}_f^h
\ge{H}{\vrule width 0pt depth 0.2em }_{\kern-0.7em\hbox{.}}\,\,{}_f^h
=\dot H{}_f^h>-\infty$ q.e.\ and similarly
$\underline H{}_f^h<+\infty$ q.e. But $\overline H{}_f^h=\dot H{}_f^h>-\infty$ q.e.\
on $\{\overline H{}_f^h>-\infty\}$, hence also q.e.\ on $U$. Similarly,
$\underline H{}_f^h<+\infty$ q.e.\ on $U$, and altogether
$\overline H{}_f^h=\underline H{}_f^h$ q.e.\ on $U$. Thus $f$ is
quasi\-resolutive. The converse is obvious.
\end{proof}

\begin{lemma}\label{lemma4.4} Every $h$-dot-resolutive function $f$ is
$h$-resolutive, and hence $h$-quasi\-resolutive (now also termed
$h$-dot-quasi\-resolutive).
\end{lemma}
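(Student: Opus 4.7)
The plan is to exploit the fine-semicontinuity information already recorded in Section~\ref{sec2} for $\dot H_f^h$ and ${H}{\vrule width 0pt depth 0.2em }_{\kern-0.7em\hbox{.}}\,\,{}_f^h$. First I would recall that $\dot H_f^h$ is finely upper semicontinuous on all of $U$ (noted right after the definition of $\overline{\cal U}{}_f^h$, since $\dot H_f^h$ is the pointwise infimum of a decreasing sequence of finely l.s.c.\ functions in $\overline{\cal U}{}_f^h$), while dually ${H}{\vrule width 0pt depth 0.2em }_{\kern-0.7em\hbox{.}}\,\,{}_f^h=\sup\underline{\cal U}{}_f^h=-\dot H_{-f}^h$ is finely lower semicontinuous on $U$ (as stated in the proof of Proposition~\ref{prop6.1}(a)). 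The $h$-dot-resolutivity hypothesis forces these two functions to coincide throughout $U$, so the common function $H_f^h$ is simultaneously finely u.s.c.\ and l.s.c., hence finely continuous; in particular $\dot H_f^h$ is itself finely l.s.c.

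Next I would invoke the definition $\overline H_f^h=\widehat{\dot H_f^h}$, which by the fundamental convergence theorem is the fine lower semicontinuous regularization of $\dot H_f^h$ (always $\le\dot H_f^h$, and equal to $\dot H_f^h$ wherever the latter is already finely l.s.c.). The previous step puts $\dot H_f^h$ in precisely that situation, so $\overline H_f^h=\dot H_f^h=H_f^h$ identically on $U$. The same reasoning applied to $-f$ -- which is also $h$-dot-resolutive because $\dot H_{-f}^h=-{H}{\vrule width 0pt depth 0.2em }_{\kern-0.7em\hbox{.}}\,\,{}_f^h=-H_f^h$ and ${H}{\vrule width 0pt depth 0.2em }_{\kern-0.7em\hbox{.}}\,\,{}_{-f}^h=-\dot H_f^h=-H_f^h$ -- yields $\overline H_{-f}^h=-H_f^h$, whence $\underline H_f^h=-\overline H_{-f}^h=H_f^h$. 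Combining, $\overline H_f^h=\underline H_f^h=H_f^h$ throughout $U$; since by hypothesis this common function is neither identically $+\infty$ nor identically $-\infty$, this is exactly the content of Definition~\ref{def6.9}, and $f$ is $h$-resolutive. Lemma~\ref{lemma3.3} then upgrades this to $h$-quasi-resolutivity, as claimed.

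The only point requiring real care is the identification of the operation $\widehat{\ }$ furnished by the fundamental convergence theorem with the plain fine lower semicontinuous regularization, so that it collapses to the identity on an already finely l.s.c.\ function. This is standard (and built into the paper's statement that $\overline H_f^h$ is finely l.s.c.\ and satisfies $\overline H_f^h\le\dot H_f^h$ with equality q.e.), but it is the one place where the background conventions from \cite{F1} have to be pinned down. Once that identification is in hand, everything else is bookkeeping driven by the dot-resolutivity hypothesis and the duality between $f$ and $-f$.
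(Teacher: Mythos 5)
Your proof is correct and reaches the conclusion by essentially the same squeeze as the paper: the paper simply cites Proposition \ref{prop6.1}(a) for the sandwich $\dot H{}_f^h\ge\overline H{}_f^h\ge\sup\underline{\cal U}{}_f^h$ together with its dual $\dot H{}_f^h\ge\underline H{}_f^h\ge\sup\underline{\cal U}{}_f^h$, notes that equality of the two extremes (the dot-resolutivity hypothesis) forces all four envelopes to coincide, and then invokes Lemma \ref{lemma3.3}, exactly as you do. Your re-derivation of the middle inequalities from the fine semicontinuity of $\dot H{}_f^h$ and $\sup\underline{\cal U}{}_f^h$ and from the fact that the regularization fixes finely l.s.c.\ functions is sound (and is in substance how Proposition \ref{prop6.1}(a) itself obtains $\overline H{}_f^h\ge\sup\underline{\cal U}{}_f^h$), so the two arguments differ only in whether that step is quoted or reproved.
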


\begin{proof} Suppose that $f$ is $h$-dot-resolutive then $f$. Then $f$ is
$h$-resolutive, for $\dot H{}_f^h,\overline H{}_f^h,\underline H{}_f^h$, and
${H}{\vrule width 0pt depth 0.2em }_{\kern-0.7em\hbox{.}}\,\,{}_f^h$
are all equal because there is equality in the general inequalities
$\dot H{}_f^h\ge\overline H{}_f^h
\ge{H}{\vrule width 0pt depth 0.2em }_{\kern-0.7em\hbox{.}}\,\,{}_f^h$ and
$\dot H{}_f^h\ge\underline H{}_f^h
\ge{H}{\vrule width 0pt depth 0.2em }_{\kern-0.7em\hbox{.}}\,\,{}_f^h$,
cf.\ Proposition \ref{prop6.1} (a). The rest follows from Lemma \ref{lemma3.3}.
\end{proof}

In view of Lemma \ref{lemma4.4}, an $h$-(dot-)quasi\-resolutive function
is $h$-dot-resolutive if and only if $\dot E_f^h=\varnothing$.
Assertions 1.\ and 2.\ of Proposition \ref{prop6.3} therefore remain valid
when $E$ is replaced throughout by $\dot E$.
The proof of the dot-version of eq.\ (3.1) uses 1.\ of Proposition
\ref{prop6.2} in place of 2.\ there.
The following lemma is analogous to Lemma \ref{lemma6.3c}:

\begin{lemma}\label{lemma4.5}  Let $f$ be an $h$-quasi\-resolutive function
on $\Delta(U)$. If $f^+$ and $f^-$ are $h$-dot-resolutive then so is $f$,
and the function
$H_f^h=\dot H{}_f^h
={H}{\vrule width 0pt depth 0.2em }_{\kern-0.7em\hbox{.}}\,\,{}_f^h$
on $U$ is finite valued.
\end{lemma}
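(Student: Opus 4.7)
The plan is to mimic the proof of Lemma \ref{lemma6.3c} with $\dot H$ and ${H}{\vrule width 0pt depth 0.2em }_{\kern-0.7em\hbox{.}}\,\,{}$ replacing $\overline H$ and $\underline H$ throughout, and with Assertion 1 of Proposition \ref{prop6.2} (the $\dot H$ version) replacing Assertion 2. First I would note that by Lemma \ref{lemma4.4} the hypotheses that $f^+$ and $f^-$ are $h$-dot-resolutive upgrade to the stronger statement that they are $h$-resolutive with finite-valued $H_{f^\pm}^h$ equal to each of $\dot H{}_{f^\pm}^h$, $\overline H{}_{f^\pm}^h$, $\underline H{}_{f^\pm}^h$ and ${H}{\vrule width 0pt depth 0.2em }_{\kern-0.7em\hbox{.}}\,\,{}_{f^\pm}^h$ on $U$. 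The identity $\underline{\cal U}{}_g^h=-\overline{\cal U}{}_{-g}^h$ gives $\dot H{}_{-f^\pm}^h=-{H}{\vrule width 0pt depth 0.2em }_{\kern-0.7em\hbox{.}}\,\,{}_{f^\pm}^h=-H_{f^\pm}^h$, and combining this with the evident monotonicity of $\dot H$ and ${H}{\vrule width 0pt depth 0.2em }_{\kern-0.7em\hbox{.}}\,\,{}$ in $f$ (immediate from the definitions in terms of $\overline{\cal U}{}^h_f$ and $\underline{\cal U}{}^h_f$) and with the pointwise bounds $-f^-\le f\le f^+$ on $\Delta(U)$ sandwiches both $\dot H{}_f^h$ and ${H}{\vrule width 0pt depth 0.2em }_{\kern-0.7em\hbox{.}}\,\,{}_f^h$ between $-H_{f^-}^h$ and $H_{f^+}^h$, so both are finite everywhere on $U$.

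Next I would apply Assertion 1 of Proposition \ref{prop6.2} to the decompositions $f=f^++(-f^-)$ and $-f=f^-+(-f^+)$, which are well defined on $\Delta(U)$ and whose corresponding sums of $\dot H$-values are finite and well defined on all of $U$. This yields on $U$
$$
\dot H{}_f^h\le H_{f^+}^h-H_{f^-}^h\quad\text{and}\quad\dot H{}_{-f}^h\le H_{f^-}^h-H_{f^+}^h,
$$
the second of which rearranges, via $\dot H{}_{-f}^h=-{H}{\vrule width 0pt depth 0.2em }_{\kern-0.7em\hbox{.}}\,\,{}_f^h$, to ${H}{\vrule width 0pt depth 0.2em }_{\kern-0.7em\hbox{.}}\,\,{}_f^h\ge H_{f^+}^h-H_{f^-}^h$. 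Combined with the general inequality $\dot H{}_f^h\ge{H}{\vrule width 0pt depth 0.2em }_{\kern-0.7em\hbox{.}}\,\,{}_f^h$ from Proposition \ref{prop6.1}(a), all inequalities collapse to equalities, producing $H_f^h:=\dot H{}_f^h={H}{\vrule width 0pt depth 0.2em }_{\kern-0.7em\hbox{.}}\,\,{}_f^h=H_{f^+}^h-H_{f^-}^h$ everywhere on $U$, a finite-valued function; and this is precisely $h$-dot-resolutivity of $f$.

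The only point to keep straight, and essentially the sole obstacle, is that one must use Assertion 1 of Proposition \ref{prop6.2} (the $\dot H$ version, which applies wherever the sum of $\dot H$-values is defined) rather than Assertion 2 (the $\overline H$ version, which would only give information on $\{\overline H{}_f^h>-\infty\}\cap\{\overline H{}_g^h>-\infty\}$); correspondingly, the upper bound on ${H}{\vrule width 0pt depth 0.2em }_{\kern-0.7em\hbox{.}}\,\,{}_f^h$ is obtained by applying that same assertion to $-f$ and negating. As in the analogous Lemma \ref{lemma6.3c}, the hypothesis that $f$ itself is $h$-quasi\-resolutive is not actually invoked in the argument.
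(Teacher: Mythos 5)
Correct, and essentially the paper's own argument: the same sandwich of $\dot H{}_f^h$ and ${H}{\vrule width 0pt depth 0.2em }_{\kern-0.7em\hbox{.}}\,\,{}_f^h$ between $-H_{f^-}^h$ and $H_{f^+}^h$, the same application of Assertion 1 of Proposition \ref{prop6.2} to $f=f^+-f^-$ and $-f=f^--f^+$, and the same collapse of the inequalities via Proposition \ref{prop6.1} (a). The only (inessential) difference is that the paper secures finiteness of $H_{f^\pm}^h$ from the $h$-quasi\-resolutivity of $f^\pm$ obtained by applying 3.\ of Proposition \ref{prop6.3} to the hypothesis on $f$, whereas you obtain it from Lemma \ref{lemma4.4} (together with Lemma \ref{lemma3.3}), which is why you can legitimately observe that the hypothesis on $f$ itself is not actually invoked.
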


\begin{proof} Since $f$ is $h$-quasi\-resolutive, so are $f^+,f^-$
(besides being $h$-dot-resolutive) by 3.\ in Proposition \ref{prop6.3}.
Hence the functions $H_{f^{\pm}}^h=\dot H{}_{f^{\pm}}^h
={H}{\vrule width 0pt depth 0.2em }_{\kern-0.7em\hbox{.}}\,\,{}_{f^{\pm}}^h$
are finite valued (co-polar subets of $U$ being non-void). From
$-f^-\le f\le f^+$ it therefore follows by Proposition \ref{prop6.1} (a) that
$$-\infty
<-{H}{\vrule width 0pt depth 0.2em }_{\kern-0.7em\hbox{.}}\,\,{}_{f^-}^h
\le{H}{\vrule width 0pt depth 0.2em }_{\kern-0.7em\hbox{.}}\,\,{}_f^h
\le\dot H{}_f^h\le\dot H{}_{f^+}^h<+\infty.$$
Applying 1.\ in Proposition \ref{prop6.2} to the sums
$f=f^++(-f)^+=f^+-f^-$ and $-f=f^--f_+$, which are both well defined on
$\Delta(U)$, we obtain
$$
\dot H{}_f^h\le H_{f^+}^h-H_{f^-}^h
\le{H}{\vrule width 0pt depth 0.2em }_{\kern-0.7em\hbox{.}}\,\,{}_f^h
$$
on all of $U$. It follows that $\dot H{}_f^h
={H}{\vrule width 0pt depth 0.2em }_{\kern-0.7em\hbox{.}}\,\,{}_f^h
=H_{f^+}^h-H_{f^-}^h$ holds there, again by Proposition \ref{prop6.1} (a).
\end{proof}

\begin{cor}\label{cor4.8a} Let $(f_j)$ be an increasing sequence of lower
bounded $h$-dot-resolutive functions
$\Delta(U)\longrightarrow\,]-\infty,+\infty]$, and let $f=\sup_jf_j$.
If $\dot H{}_f^h\not\equiv+\infty$ then $f$ is $h$-dot-resolutive.
 \end{cor}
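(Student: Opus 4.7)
The plan is to verify the two defining conditions of $h$-dot-resolutivity for $f$: the equality $\dot H{}_f^h={H}{\vrule width 0pt depth 0.2em }_{\kern-0.7em\hbox{.}}\,\,{}_f^h$ on all of $U$, and that this common function is neither identically $+\infty$ nor identically $-\infty$. As a preliminary step I would reduce to the case $f_j\ge0$ by adding a common constant to the sequence, invoking the affine covariance $\dot H{}_{f+c}^h=\dot H{}_f^h+c$ and ${H}{\vrule width 0pt depth 0.2em }_{\kern-0.7em\hbox{.}}\,\,{}_{f+c}^h={H}{\vrule width 0pt depth 0.2em }_{\kern-0.7em\hbox{.}}\,\,{}_f^h+c$ recorded just after the introduction of these operators in Section \ref{sec2}.

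Setting $H{}_{f_j}^h:=\dot H{}_{f_j}^h={H}{\vrule width 0pt depth 0.2em }_{\kern-0.7em\hbox{.}}\,\,{}_{f_j}^h$, which is legitimate by the $h$-dot-resolutivity of each $f_j$, the heart of the proof is the chain
$$
{H}{\vrule width 0pt depth 0.2em }_{\kern-0.7em\hbox{.}}\,\,{}_f^h
\ge\sup_j{H}{\vrule width 0pt depth 0.2em }_{\kern-0.7em\hbox{.}}\,\,{}_{f_j}^h
=\sup_j\dot H{}_{f_j}^h
=\dot H{}_f^h
\ge{H}{\vrule width 0pt depth 0.2em }_{\kern-0.7em\hbox{.}}\,\,{}_f^h.
$$
The first inequality is just monotonicity: because $f\ge f_j$, every $v\in\underline{\cal U}{}_{f_j}^h$ satisfies $\limsup v\le f_j\le f$ and hence lies in $\underline{\cal U}{}_f^h$, forcing ${H}{\vrule width 0pt depth 0.2em }_{\kern-0.7em\hbox{.}}\,\,{}_f^h=\sup\underline{\cal U}{}_f^h\ge\sup\underline{\cal U}{}_{f_j}^h={H}{\vrule width 0pt depth 0.2em }_{\kern-0.7em\hbox{.}}\,\,{}_{f_j}^h$. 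The middle equality $\sup_j\dot H{}_{f_j}^h=\dot H{}_f^h$ is precisely assertion 4 of Proposition \ref{prop6.2}, applicable because the $f_j$ are lower bounded (by $0$ after the reduction). The final inequality is the general relation of Proposition \ref{prop6.1}(a). Equality therefore prevails throughout the chain, yielding $\dot H{}_f^h={H}{\vrule width 0pt depth 0.2em }_{\kern-0.7em\hbox{.}}\,\,{}_f^h$ everywhere on $U$.

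The two non-triviality conditions are then immediate: $\dot H{}_f^h\not\equiv+\infty$ is the standing hypothesis, while the reduction to $f_j\ge0$ yields $\dot H{}_f^h\ge0$, so the common function is not identically $-\infty$ either. Hence $f$ is $h$-dot-resolutive, and by construction $H_f^h=\sup_jH_{f_j}^h$.

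No step is a serious obstacle. The key observation is that, in contrast to the analogous Corollary \ref{cor4.8} for $\overline H,\underline H$ where a delicate comparison between a pointwise supremum and its fine lower semicontinuous regularization was required, Proposition \ref{prop6.2}(4) already supplies the exact identity $\sup_j\dot H{}_{f_j}^h=\dot H{}_f^h$; this is what makes the dot-version substantially shorter than its $\overline H$-counterpart.
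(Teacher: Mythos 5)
Your proof is correct and follows essentially the same route as the paper's: the chain ${H}{\vrule width 0pt depth 0.2em }_{\kern-0.7em\hbox{.}}\,\,{}_f^h\ge\sup_j\dot H{}_{f_j}^h=\dot H{}_f^h$ via monotonicity, dot-resolutivity of the $f_j$, and assertion 4 of Proposition \ref{prop6.2}, closed up by Proposition \ref{prop6.1}(a). The only cosmetic difference is that you make the reduction to $f_j\ge0$ explicit, whereas the paper simply notes that $H_f^h>-\infty$ is clear from the common lower bound $f\ge f_1$.
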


\begin{proof} For every $j$ we have
${H}{\vrule width 0pt depth 0.2em }_{\kern-0.7em\hbox{.}}\,\,{}_f^h
\ge{H}{\vrule width 0pt depth 0.2em }_{\kern-0.7em\hbox{.}}\,\,{}_{f_j}^h
=\dot H{}_{f_j}^h$, and hence
${H}{\vrule width 0pt depth 0.2em }_{\kern-0.7em\hbox{.}}\,\,{}_f^h
\ge\sup_j\dot H{}_{f_j}^h=\dot H{}_f^h$
according to 4.\ in Proposition \ref{prop6.2}.
Here equality prevails on account of Proposition \ref{prop6.1} (a).
By hypothesis, $H_f^h\not\equiv+\infty$, and clearly $H_f^h>-\infty$, so we
conclude that $f$ indeed is $h$-dot-resolutive.
\end{proof}

\begin{prop}\label{prop6.4a} For any $\mu_h$-measurable subset $A$
of $\Delta(U)$ the indicator function $1_A$ is $h$-dot-resolutive and
{\rm{(3.3)}} holds.
In particular, the constant function $1$ on $\Delta(U)$ is
$h$-dot-resolutive and $H_1^h=1$.
\end{prop}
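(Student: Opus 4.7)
The plan is to bootstrap from Proposition \ref{prop6.4}, which already establishes that $1_A$ is $h$-resolutive with $H_{1_A}^h = \frac{1}{h}\widehat R{}_h^A = \frac{1}{h}\int_A K(.,Y)\,d\mu_h(Y)$ and $0\le H_{1_A}^h\le 1$ on $U$. What remains is the upgrade to $h$-dot-resolutivity, namely verifying the equalities $\dot H{}_{1_A}^h = H_{1_A}^h$ and ${H}{\vrule width 0pt depth 0.2em }_{\kern-0.7em\hbox{.}}\,\,{}_{1_A}^h = H_{1_A}^h$ everywhere on $U$ (rather than only quasieverywhere, which is all that follows automatically from Lemma \ref{lemma3.3}). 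The mechanism I will invoke is Proposition \ref{prop6.1}(c), which for $f\ge0$ forces $\dot H{}_f^h = \overline H{}_f^h$ at every point where $\dot H{}_f^h<+\infty$; so the task reduces to verifying the a priori finiteness $\dot H{}_{1_A}^h<+\infty$ everywhere on $U$, and then the analogous finiteness for the complementary indicator.

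For the upper side, I would first observe that the constant function $u\equiv 1$ on $U$ belongs to $\overline{\cal U}{}_{1_A}^h$: writing $u=h/h$, the numerator $v:=h$ is finely (super)harmonic, $u$ is lower bounded, and $\liminf_{x\to Y,\,x\in U}1 = 1 \ge 1_A(Y)$ for every $Y\in\Delta(U)$. Consequently $\dot H{}_{1_A}^h\le 1<+\infty$ on $U$, and Proposition \ref{prop6.1}(c) (applied with $f=1_A\ge0$) gives $\dot H{}_{1_A}^h = \overline H{}_{1_A}^h = H_{1_A}^h$ everywhere on $U$.

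For the lower side, I would apply the identical argument to the complementary indicator $1_{\Delta(U)\setminus A}$, which by Proposition \ref{prop6.4} is likewise $h$-resolutive with $H_{1_{\Delta(U)\setminus A}}^h = 1 - H_{1_A}^h$; the preceding step then yields $\dot H{}_{1_{\Delta(U)\setminus A}}^h = 1 - H_{1_A}^h$ on all of $U$. Combining this with the identity ${H}{\vrule width 0pt depth 0.2em }_{\kern-0.7em\hbox{.}}\,\,{}_{1_A}^h = -\dot H{}_{-1_A}^h$ (a direct consequence of $\underline{\cal U}{}_f^h = -\overline{\cal U}{}_{-f}^h$) and the affine rule $\dot H{}_{\alpha f+\beta}^h = \alpha\dot H{}_f^h+\beta$ applied to $-1_A = 1_{\Delta(U)\setminus A}-1$, I obtain ${H}{\vrule width 0pt depth 0.2em }_{\kern-0.7em\hbox{.}}\,\,{}_{1_A}^h = H_{1_A}^h$ everywhere on $U$. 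Thus $\dot H{}_{1_A}^h = {H}{\vrule width 0pt depth 0.2em }_{\kern-0.7em\hbox{.}}\,\,{}_{1_A}^h = H_{1_A}^h$ takes values in $[0,1]$ and so is neither identically $+\infty$ nor identically $-\infty$, establishing that $1_A$ is $h$-dot-resolutive with formula (3.3) inherited unchanged from Proposition \ref{prop6.4}. There is essentially no obstacle here; the entire argument is a short deduction in which the trivial a priori bound $\dot H{}_{1_A}^h\le1$ coming from the constant-$1$ barrier suffices to trigger Proposition \ref{prop6.1}(c), automatically promoting the quasieverywhere equalities to equalities on all of $U$.
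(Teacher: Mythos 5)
Your proposal is correct and follows essentially the same route as the paper: the paper's proof likewise notes that $\dot H{}_{1_A}^h\le\dot H{}_1^h=1<+\infty$, invokes Proposition \ref{prop6.1}(c) to get $\dot H{}_{1_A}^h=\overline H{}_{1_A}^h$, and then reduces everything to Proposition \ref{prop6.4}. Your explicit treatment of the lower side via the complementary indicator and the affine rule merely spells out what the paper leaves implicit.
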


\begin{proof} Since $\dot H{}_{1_A}^h\le\dot H_1^h=1<+\infty$ it follows from
Proposition \ref{prop6.1} (c) that $\overline H{}_{1_A}^h=\dot H{}_{1_A}^h$,
and the assertions reduce to the analogous Proposition \ref{prop6.4}.
\end{proof}

\begin{prop}\label{prop6.5a} Let $f$ be a $\mu_h$-measurable
lower bounded function on $\Delta(U)$. Then
$$
{\dot H}{}_f^h=\frac{1}{h}\int f(Y)K(.,Y) d\mu_h(Y)>-\infty,$$
and ${\dot H}{}_f^h$ is either identically $+\infty$ or the sum of
an $h$-invariant function and a constant $\le0$.
Furthermore $f$ is $h$-quasi\-resolutive if and only if $f$ is
$h$-dot-resolutive,
and that holds if and only if
$\frac{1}{h}\int f(Y)K(.,Y)d\mu_h(Y)<+\infty$
q.e.\ on $U$, or equivalently: everywhere on $U$.
In particular, every bounded $\mu_h$-measurable function
$f:\Delta(U)\longrightarrow\RR$ is $h$-dot-resolutive.
\end{prop}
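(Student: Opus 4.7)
The plan is to mirror the proof of Proposition \ref{prop6.5} using the ``dot''-versions of its tools, exploiting the key fact that for lower bounded $\mu_h$-measurable $f$ the identity $\dot H{}_f^h=\overline H{}_f^h$ actually holds \emph{everywhere} on $U$, not merely quasi\-everywhere. After reducing to $f\ge0$ by a constant shift I would write $f=\sup_jf_j$ for an increasing sequence of nonnegative bounded $\mu_h$-measurable step functions. For each such bounded $f_j$ the constant $\sup f_j$ lies in $\overline{\cal U}{}_{f_j}^h$, so $\dot H{}_{f_j}^h<+\infty$ on $U$, whence Proposition \ref{prop6.1}(c) forces $\dot H{}_{f_j}^h=\overline H{}_{f_j}^h$ everywhere; the symmetric argument applied to $-f_j$ yields ${H}{\vrule width 0pt depth 0.2em }_{\kern-0.7em\hbox{.}}\,\,{}_{f_j}^h=\underline H{}_{f_j}^h$. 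Consequently, for bounded $\mu_h$-measurable functions $h$-resolutivity and $h$-dot-resolutivity coincide unconditionally, and Proposition \ref{prop6.4a} together with the dot-forms of 1.\ and 2.\ of Proposition \ref{prop6.3} (legitimate on such step functions by the preceding coincidence) give $\dot H{}_{f_j}^h=\frac{1}{h}\int f_j(Y)K(\cdot,Y)\,d\mu_h(Y)$ on $U$.

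Assertion 4 of Proposition \ref{prop6.2} applied to both $\dot H$ and $\overline H$ then yields $\dot H{}_f^h=\sup_j\dot H{}_{f_j}^h=\sup_j\overline H{}_{f_j}^h=\overline H{}_f^h$ on $U$, while monotone convergence gives $\sup_j\frac{1}{h}\int f_jK\,d\mu_h=\frac{1}{h}\int fK\,d\mu_h$. Combining and undoing the initial shift yields the asserted identity $\dot H{}_f^h=\frac{1}{h}\int f(Y)K(\cdot,Y)\,d\mu_h(Y)>-\infty$ on $U$. Since the same reasoning gives $\dot H{}_f^h=\overline H{}_f^h$ everywhere on $U$ before and hence also after undoing the shift, the dichotomy ``identically $+\infty$ or $h$-invariant plus a constant $\le0$'' transfers to $\dot H{}_f^h$ straight from Proposition \ref{prop6.1}(c).

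For the stated equivalences, Lemma \ref{lemma4.4} followed by Lemma \ref{lemma3.3} gives $h$-dot-resolutive $\Rightarrow$ $h$-resolutive $\Rightarrow$ $h$-quasi\-resolutive. Conversely, Proposition \ref{prop6.5} identifies $h$-quasi\-resolutivity of lower bounded $\mu_h$-measurable $f$ with the condition $\frac{1}{h}\int fK\,d\mu_h<+\infty$ quasi\-everywhere, equivalently everywhere, on $U$; under that hypothesis the bounded truncations $f\wedge j$ are $h$-dot-resolutive by the bounded case, and since $\dot H{}_f^h=\sup_j\dot H{}_{f\wedge j}^h\not\equiv+\infty$, Corollary \ref{cor4.8a} concludes that $f$ itself is $h$-dot-resolutive. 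The in-particular clause falls out of the bounded case. The main delicacy I foresee is to justify the dot-forms of 1.\ and 2.\ of Proposition \ref{prop6.3} at the step-function level without circularly invoking Proposition \ref{prop6.5a}; the unconditional coincidence of bounded $h$-resolutivity with bounded $h$-dot-resolutivity established in the first paragraph is precisely what dissolves this potential difficulty.
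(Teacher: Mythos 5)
Your proposal is correct and follows essentially the same route as the paper, whose proof of this proposition is simply the remark that the argument for Proposition \ref{prop6.5} carries over \emph{mutatis mutandis} once one uses the $\dot H$ case of assertion 4 in Proposition \ref{prop6.2}; your unwinding (step functions via Proposition \ref{prop6.4a} and the dot-forms of 1.\ and 2.\ of Proposition \ref{prop6.3}, passage to the limit by assertion 4, truncations $f\wedge j$ with Corollary \ref{cor4.8a}) is exactly what that remark entails. Your observation that boundedness forces $\dot H{}_{f_j}^h=\overline H{}_{f_j}^h$ everywhere via Proposition \ref{prop6.1}(c) is likewise the point the paper makes at the opening of Section 4, so the claimed non-circularity is sound.
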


\begin{proof} In view of the case of $\dot H$ in 4.\ of Proposition
\ref{prop6.2} the proof of the analogous Proposition \ref{prop6.5} carries over
{\it{mutatis mutandis}}.
\end{proof}

\begin{cor}\label{cor6.10a} Let $f:\Delta(U)\longrightarrow\overline\RR$
be $\mu_h$-measurable. Then $f$ is $h$-dot-resolutive
if and only if $|f|$ is $h$-dot-resolutive.
\end{cor}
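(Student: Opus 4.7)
The plan is to mimic the proof of Corollary \ref{cor4.10a}, substituting the dot-versions of the key tools: Lemma \ref{lemma4.4} in place of Lemma \ref{lemma3.3}, Proposition \ref{prop6.5a} in place of Proposition \ref{prop6.5}, and Lemma \ref{lemma4.5} in place of Lemma \ref{lemma6.3c}.

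For the forward direction, suppose $f$ is $h$-dot-resolutive. By Lemma \ref{lemma4.4}, $f$ is then $h$-resolutive, hence $h$-quasi\-resolutive by Lemma \ref{lemma3.3}. Applying items 3 and 1 of Proposition \ref{prop6.3} to $f$ and $-f$, we conclude that $|f|=f\vee(-f)$ is $h$-quasi\-resolutive. Since $|f|$ is $\mu_h$-measurable and lower bounded (being $\ge 0$), Proposition \ref{prop6.5a} upgrades this to $h$-dot-resolutivity of $|f|$, and yields moreover that $Y\longmapsto |f(Y)|K(x,Y)$ is $\mu_h$-integrable for every $x\in U$.

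For the converse, assume $|f|$ is $h$-dot-resolutive. By Proposition \ref{prop6.5a}, $|f|K(x,\cdot)$ is $\mu_h$-integrable for every $x\in U$. Since $f$ is $\mu_h$-measurable, so are $f^+$ and $f^-$, and the dominations $f^\pm\le|f|$ give that $f^\pm K(x,\cdot)$ are $\mu_h$-integrable for every $x\in U$. A second application of Proposition \ref{prop6.5a} (to each of the lower bounded $\mu_h$-measurable functions $f^+$ and $f^-$) then shows that $f^+$ and $f^-$ are $h$-dot-resolutive.

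To combine these into $h$-dot-resolutivity of $f$ I would invoke Lemma \ref{lemma4.5}, which requires $f$ to be $h$-quasi\-resolutive. This hypothesis is supplied as follows: $f^\pm$ are $h$-dot-resolutive, hence $h$-resolutive by Lemma \ref{lemma4.4} and therefore $h$-quasi\-resolutive by Lemma \ref{lemma3.3}; then items 1 and 2 of Proposition \ref{prop6.3} yield that $f=f^+-f^-$ is $h$-quasi\-resolutive. Lemma \ref{lemma4.5} now delivers the $h$-dot-resolutivity of $f$. The only ``obstacle'' is the bookkeeping step of checking that each of the tools cited above has been established in its dot-form earlier in Section \ref{sec4}; conceptually the argument is a line-by-line transcription of the proof of Corollary \ref{cor4.10a}.
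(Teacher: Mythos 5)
Your proposal is correct and follows essentially the same route as the paper's own proof: forward direction via Lemma \ref{lemma4.4} and items 1.\ and 3.\ of Proposition \ref{prop6.3} followed by the upgrade of Proposition \ref{prop6.5a}, converse via $\mu_h$-integrability of $f^\pm K(x,\cdot)$ and Lemma \ref{lemma4.5}. You are in fact slightly more careful than the paper in explicitly supplying the $h$-quasiresolutivity of $f$ needed as a hypothesis for Lemma \ref{lemma4.5}.
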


\begin{proof} If $f$ is $h$-dot-resolutive, and therefore
$h$-quasiresolutive by Lemma \ref{lemma4.4}, then $|f|=f\vee(-f)$ is
$h$-quasi\-resolutive according to 1.\ and 3.\ in Proposition \ref{prop6.3}.
Now, $|f|$ is lower bounded (and $\mu_h$-measurable), and $|f|$ is therefore
even $h$-dot-resolutive, by Proposition \ref{prop6.5a}.
Consequently, $|f|K(x,.)$ is $\mu_h$-integrable for every
$x\in U$. So are therefore $f^+K(x,.)$ and $f^-K(x,.)$.
Again by Proposition \ref{prop6.5a} it follows that $f^+$ and $f^-$ are
$h$-dot-resolutive along with $f^+$ and $f^-$ (positive) by Lemma
\ref{prop6.5a}. So is therefore $f=f^+-f^-$ by Lemma \ref{lemma4.5}.
\end{proof}

\begin{cor}\label{cor4.6} A function $f$ on $\Delta(U)$ with values in
$\overline\RR$ is $h$-dot-resolutive if and only if $f$ is
$h$-quasi\-resolutive.
\end{cor}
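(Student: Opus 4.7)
The plan is to mirror the strategy of Corollary \ref{cor6.3b}, which established the analogous equivalence between $h$-resolutivity and $h$-quasi\-resolutivity, now exchanging the role of $\overline H{}_f^h,\underline H{}_f^h$ for the pair $\dot H{}_f^h,{H}{\vrule width 0pt depth 0.2em }_{\kern-0.7em\hbox{.}}\,\,{}_f^h$. All the ingredients needed are already in place.

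For the ``only if'' direction, one simply invokes Lemma \ref{lemma4.4}, which says that an $h$-dot-resolutive function is automatically $h$-resolutive, hence $h$-quasi\-resolutive by Lemma \ref{lemma3.3}. For the ``if'' direction, assume $f:\Delta(U)\to\overline\RR$ is $h$-quasi\-resolutive. The first step is to appeal to Proposition \ref{prop6.10b} to deduce that $f$ is $\mu_h$-measurable. Next, observe that $f^+$ and $f^-$ are again $h$-quasi\-resolutive (this uses assertion 3. of Proposition \ref{prop6.3}), and they are lower bounded (in fact $\ge 0$). Being positive and $\mu_h$-measurable, Proposition \ref{prop6.5a} applies and shows that $f^+$ and $f^-$ are in fact $h$-dot-resolutive.

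The final step is to lift this from $f^+,f^-$ to $f$ itself. Here we invoke Lemma \ref{lemma4.5}: since $f$ is $h$-quasi\-resolutive and both $f^+$ and $f^-$ are $h$-dot-resolutive, the lemma directly yields that $f=f^+-f^-$ is $h$-dot-resolutive, with $H_f^h$ finite valued.

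No real obstacle is anticipated; the argument is a transcription of the proof of Corollary \ref{cor6.3b}, with Proposition \ref{prop6.5} replaced by its dot-counterpart Proposition \ref{prop6.5a} and Lemma \ref{lemma6.3c} replaced by Lemma \ref{lemma4.5}. The only point deserving care is the reduction to positive functions in order to apply Proposition \ref{prop6.5a}, which requires that taking positive and negative parts preserves $h$-quasi\-resolutivity; this is precisely what assertion 3. of Proposition \ref{prop6.3} provides.
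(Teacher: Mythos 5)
Your proof is correct and follows essentially the same route as the paper's: Lemma \ref{lemma4.4} for the ``only if'' direction, and for the ``if'' direction $\mu_h$-measurability via Proposition \ref{prop6.10b}, $h$-quasi\-resolutivity of $f^+,f^-$ via 3.\ of Proposition \ref{prop6.3}, and $h$-dot-resolutivity of these positive parts via Proposition \ref{prop6.5a}. The only (harmless) divergence is the final recombination $f=f^+-f^-$: the paper cites the dot-versions of 1.\ and 2.\ of Proposition \ref{prop6.3}, whereas you invoke Lemma \ref{lemma4.5}, which is if anything the cleaner citation since that lemma was stated precisely for this purpose.
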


\begin{proof} The `only if' part is contained in Lemma \ref{lemma4.4}.
For the `if' part, suppose that $f$ is $h$-quasi\-resolutive and hence
$\mu_h$-measurable according to Proposition \ref{prop6.10b}. If $f\ge0$ then
$f$ is $h$-dot-resolutive according to Proposition \ref{prop6.5a}.
For arbitrary $f:\Delta(U)\longrightarrow\overline\RR$ this applies to
$f^+$ and $f^-$, which are $h$-quasi\-resolutive
according to 3.\ in Proposition \ref{prop6.3}. Consequently, $f=f^+-f^-$
is likewise finely $h$-dot-resolutive by 1.\ and 2.\ in Proposition
\ref{prop6.3}.
\end{proof}

\begin{theorem}\label{thm6.8c}
A function $f:\Delta(U)\longrightarrow\overline\RR$ is
$h$-dot-resolutive if and only if the
function $Y\longmapsto f(Y)K(x,Y)$ on $\Delta(U)$ is $\mu_h$-integrable  for
every $x\in U$, or equivaletly for quasievery $x\in U$.
In the affirmative case the solution of the PWB-problem
on $U$ with boundary function $f$ is
$$H{}_f^h:=\overline H{}_f^h=\underline H{}_f^h=\dot H{}_f^h
={H}{\vrule width 0pt depth 0.2em }_{\kern-0.7em\hbox{.}}\,\,{}_f^h
=\frac{1}{h}\int f(Y)K(.,Y)d\mu_h(Y).
$$
\end{theorem}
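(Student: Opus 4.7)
The plan is to reduce Theorem \ref{thm6.8c} entirely to the already-established Theorem \ref{thm6.8} via the coincidence of the three notions of resolutivity. By Corollary \ref{cor4.6}, $h$-dot-resolutivity is equivalent to $h$-quasi-resolutivity, and by Corollary \ref{cor6.3b}, $h$-quasi-resolutivity is in turn equivalent to $h$-resolutivity. Consequently the integrability characterization of Theorem \ref{thm6.8c} is merely a transcription of that of Theorem \ref{thm6.8}: $f$ is $h$-dot-resolutive if and only if $Y\longmapsto f(Y)K(x,Y)$ is $\mu_h$-integrable for quasievery $x\in U$, and in the affirmative case the integrability actually holds at every $x\in U$, since Theorem \ref{thm6.8} already performs this upgrade.

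For the chain of equalities defining $H{}_f^h$ in the affirmative case, I would start from Theorem \ref{thm6.8}, which supplies
$$\overline H{}_f^h=\underline H{}_f^h=\frac{1}{h}\int f(Y)K(.,Y)d\mu_h(Y)$$
everywhere on $U$. Proposition \ref{prop6.1}(a) provides the universal ordering $\dot H{}_f^h\ge\overline H{}_f^h\ge{H}{\vrule width 0pt depth 0.2em }_{\kern-0.7em\hbox{.}}\,\,{}_f^h$ together with $\dot H{}_f^h\ge\underline H{}_f^h\ge{H}{\vrule width 0pt depth 0.2em }_{\kern-0.7em\hbox{.}}\,\,{}_f^h$ throughout $U$. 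Since $h$-dot-resolutivity asserts precisely that $\dot H{}_f^h={H}{\vrule width 0pt depth 0.2em }_{\kern-0.7em\hbox{.}}\,\,{}_f^h$ (with the common value neither $+\infty$ nor $-\infty$), this squeeze collapses all four functions to a single finite-valued common function, which must then coincide with the integral.

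The proof is essentially bookkeeping; no new analytic content is required. The heavy lifting was done earlier in Theorem \ref{thm6.8} (where the integrability characterization and the integral representation were established for $h$-resolutive functions), in the sandwich supplied by Proposition \ref{prop6.1}(a), and in the chain of equivalences built up through Lemma \ref{lemma4.4} and Corollaries \ref{cor4.6} and \ref{cor6.3b}. The only point requiring a moment's care is that the squeeze step really does force equality at every $x\in U$ rather than merely quasieverywhere, but this follows because both $\dot H{}_f^h$ and ${H}{\vrule width 0pt depth 0.2em }_{\kern-0.7em\hbox{.}}\,\,{}_f^h$ are defined pointwise on all of $U$ and the hypothesis of $h$-dot-resolutivity is itself a pointwise identity on $U$.
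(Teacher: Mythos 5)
Your proof is correct, but it takes a genuinely different (and more economical) route than the paper. The paper disposes of Theorem \ref{thm6.8c} by asserting that the proof of Theorem \ref{thm6.8} carries over \emph{mutatis mutandis} --- that is, by re-running the entire argument in the dot-setting, which is exactly why Section 4 builds the dot-analogues of all the supporting results (Lemmas \ref{lemma4.3}--\ref{lemma4.5}, Corollary \ref{cor4.8a}, Propositions \ref{prop6.4a} and \ref{prop6.5a}, Corollaries \ref{cor6.10a} and \ref{cor4.6}). You instead treat Theorem \ref{thm6.8} as a black box and reduce to it through the equivalences of Corollary \ref{cor4.6} and Corollary \ref{cor6.3b}, then recover the full string of equalities by squeezing $\overline H{}_f^h$ and $\underline H{}_f^h$ between $\dot H{}_f^h$ and ${H}{\vrule width 0pt depth 0.2em }_{\kern-0.7em\hbox{.}}\,\,{}_f^h$, which coincide by dot-resolutivity. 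This avoids duplicating the argument at the cost of one extra observation (the squeeze); the paper's re-derivation is longer in principle but needs no such transfer step. Two small points of bookkeeping in your write-up, neither of which is a gap: (i) the chain of inequalities you attribute wholly to Proposition \ref{prop6.1}(a) also uses the definitional facts $\dot H{}_f^h\ge\overline H{}_f^h$ and ${H}{\vrule width 0pt depth 0.2em }_{\kern-0.7em\hbox{.}}\,\,{}_f^h\le\underline H{}_f^h$ (the latter obtained by applying the former to $-f$); and (ii) the everywhere-finiteness of the common function comes from the $\mu_h$-integrability of $fK(x,\cdot)$ at \emph{every} $x$ supplied by Theorem \ref{thm6.8}, not from Definition \ref{def4.0}, which only excludes the identically infinite cases --- your text is slightly ambiguous there but your conclusion is right.
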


\begin{proof} The proof of the analogous Theorem \ref{thm6.8a} carries over
{\it{mutatis mutandis}}.
\end{proof}

The alternative concept of $h$-resolutivity relative to the mf-topology
discussed at the end of the preceding section likewise has a similarly
established compatible version based on $\dot H{}_f^h$ and
${H}{\vrule width 0pt depth 0.2em }_{\kern-0.7em\hbox{.}}\,\,{}_f^h$
instead of $\overline H{}_f^h$ and $\underline H{}_f^h$.

\thebibliography{9}

\bibitem{Ai} Aikawa, H.:\textit{Potential Analysis on non-smooth domains --
Martin boundary and boundary Harnack principle}, Complex Analysis and
Potential Theory, 235--253, CRM Proc. Lecture Notes 55, Amer. Math. Soc.,
Providence, RI, 2012.

\bibitem{Al} Alfsen, E.M.: \textit{Compact Convex Sets and Boundary
Integrals}, Ergebnisse der Math., Vol. 57, Springer, Berlin, 2001.

\bibitem{AG} Armitage, D.H., Gardiner, S.J.: \textit{Classical
Potential Theory}, Springer, London, 2001.

\bibitem{Do} Doob, J.L.: \textit{Classical Potential Theory and Its
Probabilistic Counterpart}, Grundlehren Vol. 262, Springer, New York, 1984.

\bibitem{El1} El Kadiri, M.: \textit{Sur la d\'ecomposition de
Riesz et la repr\'esentation int\'egrale des fonctions finement
surharmoniques}, Positivity {\bf 4} (2000), no. 2, 105--114.

\bibitem{EF1} El Kadiri, M., Fuglede, B.: \textit{Martin boundary of a
fine domain and a Fatou-Naim-Doob theorem for finely super\-harmonic
functions}, Manuscript (2013).

\bibitem{EF2} El Kadiri, M., Fuglede, B.: \textit{Sweeping at the Martin
boundary of a fine domain}, Manuscript (2013).

\bibitem{F1} Fuglede, B.: \textit{Finely Harmonic Functions}, Lecture Notes
in Math. 289, Springer, Berlin, 1972.

\bibitem{F2} Fuglede, B.: \textit{Sur la fonction de Green pour un
domaine fin}, Ann. Inst. Fourier \textbf{25}, 3--4 (1975), 201--206.

\bibitem{F4} Fuglede, B.: \textit{Integral representation of fine
 potentials}, Math. Ann. \textbf{262} (1983), 191--214.

\end{document}